\documentclass[11pt]{amsart}
\usepackage[utf8]{inputenc}
\usepackage{amsmath,amsfonts,amssymb}

\usepackage{enumerate}
\usepackage{mathrsfs}
\usepackage{graphicx}

\usepackage{listings}
\usepackage{caption} 
\usepackage{hyperref}

\usepackage[ruled,vlined,noresetcount]{algorithm2e}
\usepackage{algorithmicx}
\usepackage{pgf,tikz,tikz-3dplot}

\usepackage[margin=1in]{geometry}
\usetikzlibrary{arrows.meta}
\usetikzlibrary{graphs, graphs.standard}
\tikzgraphsset{declare={polygon_n}{[clique]\foreach\x in\tikzgraphV{\x/}}}

\usepackage{subcaption}
\usepackage{float}

\usetikzlibrary{positioning}
\tikzset{sgplattice/.style={inner sep=1pt,norm/.style={red!50!blue},char/.style={blue!50!black},
  lin/.style={black!50}},cnj/.style={black!50,yshift=-2.5pt,left=-1pt of #1,scale=0.5,fill=white}}

\usepackage{float} 

  \newtheorem{theorem}{Theorem}[section]
  \newtheorem{corollary}[theorem]{Corollary} 
  \newtheorem{lemma}[theorem]{Lemma}
  \newtheorem{proposition}[theorem]{Proposition}
  
  \newtheorem{example}[theorem]{Example}
  \newtheorem{remark}[theorem]{Remark}
  
  \numberwithin{equation}{section}

\def\lcm{\mathrm{lcm}}
\def\rad{\mathrm{rad}}

\newcommand{\gen}[1]{\ensuremath{\langle #1\rangle}}

\title{On the factorization graph  of  finite groups}

\dedicatory{This paper is dedicated to Prof. Dr. Bernhard Amberg}

\author{Ismael Gutierrez*}
\address{* Corresponding Author, Department of Mathematics and Statistics, Universidad del Norte, Km 5 via a Puerto Colombia, Barranquilla - Colombia}
\email{isgutier@uninorte.edu.co}

\author{Yesneri Zuleta}
\address{Department of Mathematics and Statistics, Universidad del Norte, Km 5 via a Puerto Colombia, Barranquilla - Colombia}
\email{zuletay@uninorte.edu.co}
\thanks{ }

\author{E. J. Garc\'ia-Claro}
 \address{Departamento de Matemáticas, Universidad Autónoma Metropolitana, Unidad Iztapalapa, Código postal 09340, Ciudad de México - México }
  \email{eliasjaviergarcia@gmail.com  }

\thanks{ }

\subjclass[2010]{Primary 05C25; Secondary 20D30, 05C40}

\date{ }

\keywords{Factorization graph, cyclic and dicyclic Groups, generalized Quaternion group}

\begin{document}

\begin{abstract}
Given a finite group $G$, the factorization graph $\Gamma(G)$ has as vertices the nontrivial proper subgroups of $G$, with two distinct vertices $H$ and $K$ adjacent if and only if $HK = G$. 
In this paper, we study structural and combinatorial properties of $\Gamma(G)$ for three families of groups: cyclic groups of square-free order, generalized quaternion groups, and dicyclic 
groups. For cyclic groups $C_{p_1\cdots p_r}$, we describe the global structure of $\Gamma(G)$ and determine its order, degree sequence, center, girth, and diameter. We also characterize completely when $\Gamma(G)$ is claw-free, show that it is non-planar for $r \geq 4$, and prove that after removing all pendant vertices, the resulting induced subgraph is 2-connected; we further compute its exact circumference for small $r$, showing it is Hamiltonian precisely when $r=4, 5$. For generalized quaternion groups, we classify all isolated vertices and prove that the reduced factorization graph has diameter two and a unique central vertex. Moreover, we show that it is planar, has chromatic number~$3$, is Eulerian, and non-Hamiltonian, and that it contains maximal 
induced stars and triangular book subgraphs. Finally, we analyze the dicyclic case and show that the reduced factorization graph is a complete bipartite graph, from which all main invariants 
follow directly. An explicit SageMath algorithm for constructing $\Gamma(G)$ is included.
\end{abstract}


\maketitle


\section{Introduction}

Subgroup-based graphs have become an effective combinatorial framework for encoding algebraic properties of groups into discrete structures. In this approach, subgroups of a given group are represented as vertices, while adjacency relations reflect algebraic interactions such as inclusion, generation, or factorization. Over the last decade, a variety of graph constructions of this type have been introduced and studied, revealing strong connections between group-theoretic structure and classical graph invariants.

Among the most prominent examples are the \emph{co-maximal subgroup graph}, introduced by Akbari, Miraftab, and Nikandish \cite{comaximal2, comaximal3, comaximal1}, the \emph{factorization graph} studied by Farrokhi and Azimi \cite{Farrokhi}, and the more recent \emph{subgroup product graph} \cite{Hayder, Das}. Although these graphs share a common guiding principle—encoding subgroup factorizations via adjacency—they differ substantially in their vertex sets and in the algebraic restrictions imposed on adjacency. As a consequence, they are not equivalent objects, and results obtained in one setting do not automatically extend to the others.

In this paper, we focus on the following construction. Let $G$ be a finite group. The \emph{factorization graph} of $G$, denoted by $\Gamma(G)$, is the simple undirected graph whose vertex set consists of all nontrivial proper subgroups of $G$, where two distinct vertices $H$ and $K$ are adjacent if and only if $HK = G$. This definition places no restrictions on the Frattini subgroup and differs from other notions of factorization or subgroup product graphs appearing in the literature. The graph $\Gamma(G)$ defined above is the unique object studied throughout this article.

Our goal is to describe the global structure of $\Gamma(G)$ for several natural families of finite groups, with particular emphasis on graph-theoretic properties such as connectivity, degree distribution, planarity, Hamiltonicity, and the structure of the center and of maximal induced subgraphs. While partial results on subgroup-based graphs for cyclic, dihedral, and quaternion groups are available, a systematic investigation of the factorization graph $\Gamma(G)$—in the precise sense adopted here—has not been carried out for these families.

The first part of the paper is devoted to cyclic groups of square-free order. In this setting, we show that the factorization graph exhibits a remarkably rigid structure: it contains a unique complete subgraph determined by the prime divisors of the group order, together with pendant vertices giving rise to sunlet and corona-type subgraphs. As a consequence, we derive explicit formulas for the degree sequence, determine when the graph is planar or non-regular, describe its center, and prove the existence of large pancyclic induced subgraphs.

In the second part, we study the factorization graph of generalized quaternion groups. After removing isolated vertices, the resulting reduced graph displays strong structural features: it has diameter 2 and a unique central vertex, contains maximal induced stars and triangular book subgraphs of explicitly determined size, and is planar, Eulerian, $3$-colorable, and non-Hamiltonian.

Finally, we briefly analyze the case of dicyclic groups, showing that their factorization graphs reduce to complete bipartite graphs and that their main graph invariants follow directly from this description.

The remainder of the paper is organized as follows. Section 2 fixes notation and recalls the basic graph-theoretic and group-theoretic concepts needed in the sequel. In Section 3, we establish general adjacency criteria and collect preliminary observations on factorization graphs. Sections 4--6 contain the main results for cyclic groups of square-free order, generalized quaternion groups, and dicyclic groups, respectively. An explicit algorithmic construction of $\Gamma(G)$ is included in the appendix.

\section{Preliminaries}

In this section, we fix notation and recall standard terminology from graph theory and finite group theory that will be used throughout the paper. Every graph is assumed to be \emph{finite}, \emph{simple}, and \emph{undirected}.  A graph is denoted by $\Gamma=(V, E)$, where $V$ is the vertex set and $E$ the edge set.

The \emph{order} and \emph{size} of $\Gamma$ are $|V|$ and $|E|$, respectively. Two vertices $x,y\in V$ are said to be \emph{adjacent}, written $x\sim y$, when $\{x,y\}\in E$. 
A graph is called complete if every pair of distinct vertices is adjacent; the complete graph on $n$ vertices is denoted by $K_n$.

The \emph{degree} of a vertex $x$ is $\deg(x)=|\{y\in V\mid y\sim x\}|$. We write $\Delta(\Gamma) = \max_{x\in V}\deg(x)$ and $\delta(\Gamma)=\min_{x\in V}\deg(x)$ for the maximum and minimum degree, respectively.

A \emph{walk} of length $k$ is a sequence of vertices $x_0 x_1\dots x_k$ such that $x_{i-1}\sim x_{i}$ for every $1\leq i\leq k$. The walk is a \emph{path} when all vertices are distinct. If, in addition, $x_{0}=x_k$ we obtain a \emph{cycle}, denoted by $C_k$. A cycle of length three is a \emph{triangle}. 

The circumference of a graph, denoted by $C(\Gamma)$, is the length of the longest cycle in the graph. If the graph contains no cycle, the circumference is undefined (or set to zero by convention). The girth of an undirected graph $\Gamma$, denoted by $g(\Gamma)$, is the shortest cycle length in the graph. If $\Gamma$ does not contain any cycles ($\Gamma$ is a forest), its girth is defined to be infinite. A graph with a girth of four or more is called triangle-free. 

A chord of a path (cycle) $(v_1,\ldots, v_n)$ is an edge between two vertices of the path (cycle) that is not an edge of the path (cycle). A path (cycle) is chordless if it contains no chords. A graph $\Gamma$ is chordal if each cycle in $\Gamma$ of length at least 4 has at least one chord. 

The \emph{distance} between two vertices $x,y$ is the length $d(x,y)$ of a shortest $x$-$y$ path. If there is no path connecting the two vertices, i.e., if they belong to different connected components, then conventionally, the distance is defined as infinite. The \emph{eccentricity} of a vertex $x$ is 
\[\operatorname{ecc}_{\Gamma}(x) = \max_{y\in V} d(x,y);\]
the \emph{radius} and \emph{diameter} of $\Gamma$ are $\operatorname{rad}(\Gamma) = \min_{x\in V} \operatorname{ecc}{\Gamma}(x)$ and $\operatorname{diam}(\Gamma) = \max_{x\in V} \operatorname{ecc}_{\Gamma}(x)$.
The \emph{center} of $\Gamma$ is the set of vertices with minimum eccentricity.

We say that $\Gamma$ is connected if a path can join any two vertices; otherwise, $\Gamma$ is disconnected. A connected graph without cycles is a tree. A spanning tree of $\Gamma$ is a subgraph that includes all the vertices of $\Gamma$ and is also a tree. 

$\Gamma$ is \emph{$k$‑connected} if $|V|>k$ and removal of any $k-1$ vertices leaves it connected.  The vertex‑connectivity is $\kappa(\Gamma)$. In particular, $\Gamma$ is \emph{2-connected} (or \emph{biconnected}) when $\kappa(\Gamma)\geq 2$. A well-known characterization states that a graph is 2-connected if and only if there exists, for any two vertices, a cycle containing these two vertices.

A \emph{clique} is a complete subgraph; its maximum order is $\omega(\Gamma)$.  An \emph{independent set} is a clique in the complement $\overline{\Gamma}$; its maximum size is denoted by $\alpha(\Gamma)$.

A \emph{total dominating set} of $\Gamma$ is a set $D\subseteq V(\Gamma)$ such that every vertex of
$\Gamma$ is joined by an edge with some vertex of $D$. When used, the total domination number is denoted by
$\gamma_t(\Gamma)$.

\smallskip

\paragraph{\emph{Special graphs.}} We recall some graph families that occur frequently in what follows. 
\begin{enumerate}[i)]
\item The star graph of order $n$ is a tree on $n$ nodes with one node having vertex degree $n-1$ and the other $n-1$ nodes having vertex degree 1. Therefore, the star graph of order $n$ is isomorphic to the complete bipartite graph $K_{1,n-1}$. An \emph{induced star} in a graph $\Gamma$ is an induced subgraph of $\Gamma$ that is isomorphic to $K_{1,k}$. A \emph{maximum induced star} in $\Gamma$ is an induced star with the maximum number of leaves among all such induced stars in $\Gamma$.

\item Let $\Gamma_1$ and $\Gamma_2$ be two graphs on disjoint sets of $n_1$ and $n_2$ vertices, respectively. The corona product of $\Gamma_1$ and $\Gamma_2$, denoted by $\Gamma_1 \odot \Gamma_2$, is defined as the graph obtained by taking one copy of $\Gamma_1$ and $n_1$ copies of $\Gamma_2$, and then adding an edge between the $i$th vertex of $\Gamma_1$ to every vertex in the $i$th copy of $\Gamma_2$. Note that if $\Gamma_j$ has $q_j$ edges, then the corona $\Gamma_1 \odot \Gamma_2$ has $n_1(n_2+1)$ vertices and $q_1+ n_1(q_2+n_2)$ edges. For $r\geq 3$, the corona product $C_r\odot K_1$, denote by $L_{2r}$ is called a Sunlet graph on $2r$ vertices.

\item A sun (or trampoline) is a chordal graph $\Gamma$ on $2n$ vertices for some $n\geq 3$ whose vertex set can be partitioned into two sets, $W =\{w_1,\ldots, w_n\}$, $U = \{u_1,\ldots, u_n\}$, such that $W$ is independent, and for each $i$ and $j$, $w_j$ is adjacent to $u_i$ if and only if $i=j$ or $i \equiv j + 1 \bmod n$. A complete sun graph is a sun $\Gamma$ in which $\Gamma[U]$ is a complete graph. A sun is called odd if its partition sets have odd cardinality.

\item A graph is called claw-free if it contains no induced subgraph isomorphic to the complete bipartite graph $K_{1,3}$.
    
\item A graph of order $n$ is called pancyclic if it contains cycles of all possible lengths, from three to $n$. 
    
\item The \emph{triangular book graph} $B_n$ is a graph with set of vertices $V(B_n) = \{u, v, w_1,\dots, w_n\}$ and edges $E(B_n) = \{(u,v)\} \cup \bigcup_{i=1}^{n} \{(u, w_i), (v, w_i)\}$, i.e.,  $B_n$ is composed of $n$ triangles (copies of $K_3$) that only share a common edge.
\end{enumerate}

We refer to \cite{Bondy, Diestel} for more graph-theoretic terminology and notation.

\smallskip

Throughout the paper, all groups are finite. Let $G$ be a group, $\mathscr{S}(G)$ the set of all subgroups of $G$, and 
\[\mathscr{V}(G) =\{\, H \leq G \mid 1 < H < G \,\}\]
the collection of all nontrivial proper subgroups of $G$. The \emph{factorization graph} of $G$, denoted by $\Gamma(G)$, is the graph with vertex set $\mathscr{V}(G)$, where two distinct vertices $H, K$ are adjacent if and only if $HK = G$.

If $\Gamma(G)$ contains isolated vertices, we define the \emph{reduced factorization graph}
$\Gamma'(G)$ as the induced subgraph obtained from $\Gamma(G)$ by deleting all isolated vertices. Given a subset $X\subseteq \mathscr{V}(G)$, we denote by $\Gamma(G)[X]$ the subgraph of $\Gamma(G)$ induced by $X$. In particular, induced subgraphs of $\Gamma'(G)$ will be denoted analogously.

Finally, given a group $G$ and subgroups $H,K\leq G$, we will repeatedly use the fact that
\[|HK|=\frac{|H||K|}{|H\cap K|},\]
as well as basic properties of subgroup lattices of cyclic, quaternion, and dicyclic groups.

The cyclic group of order $n$ is denoted by $C_n$. If $C_n=\gen{g}$, then for every divisor $d$ of $n$ there exists exactly one subgroup $U_d$ of order $d$, which is generated by $g^{n/d}$. In addition, for the subgroup $\gen{g^d}$ holds $|G:\gen{g^d}| =d$. 

If $H$ and $K$ are subgroups of finite index of a group $G$ such that $|G: H|$ and $|G: K|$ are relatively prime, then $G=HK$. (See, for example, \cite[I.2.13]{Huppert} or \cite[1.3.11]{Robinson1}. If $G$ is a finite group, we denote by $\pi(G)$ the number of distinct prime divisors of the order of $G$.

\section{General observations on factorization graphs}

In this section, we collect a small number of general facts about the factorization graph $\Gamma(G)$
that will be used repeatedly in later sections. We emphasize that no new constructions are introduced
here; the purpose is to fix simple criteria that clarify adjacency in $\Gamma(G)$ and streamline the
proofs for specific families of groups.

\subsection{Adjacency criteria}

We begin with a well-known sufficient condition for adjacency in $\Gamma(G)$.

\begin{lemma}\label{coprime}
Let $G$ be a finite group and let $H,K\leq G$ be subgroups such that
\[\gcd(|G:H|,|G:K|)=1.\]
Then $HK = G$, and hence $H$ and $K$ are adjacent in $\Gamma(G)$.
\end{lemma}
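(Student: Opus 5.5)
The plan is a direct counting argument based on the product formula $|HK|=|H||K|/|H\cap K|$ recalled in the preliminaries; this is the classical fact cited there from \cite[I.2.13]{Huppert}. We first show that $|G:H\cap K|$ is divisible by both indices. Since $H\cap K\leq H\leq G$, the tower law gives $|G:H\cap K|=|G:H|\,|H:H\cap K|$, so $|G:H|$ divides $|G:H\cap K|$. By symmetry $|G:K|$ divides it too. Because $\gcd(|G:H|,|G:K|)=1$, the product $|G:H|\,|G:K|$ divides $|G:H\cap K|$, and in particular
\[
|G:H|\,|G:K|\le |G:H\cap K|.
\]

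Next we bound $|HK|$ from below. Dividing $|G|^2$ by the inequality above gives $|H|\,|K|\ge |G|\,|H\cap K|$. Hence
\[
|HK|=\frac{|H|\,|K|}{|H\cap K|}\ge |G|.
\]
Since $HK\subseteq G$ always holds, this forces $|HK|=|G|$ and therefore $HK=G$. No normality is needed, because $HK$ is only treated as a subset and the product formula holds for arbitrary subgroups.

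For the adjacency conclusion, we assume $H$ and $K$ are vertices of $\Gamma(G)$, that is, nontrivial proper subgroups. They must then be distinct: if $H=K$, the gcd condition gives $|G:H|=1$, contradicting properness. So $HK=G$ makes them adjacent by definition. There is no real obstacle here; the only point needing care is the divisibility step, where coprimality upgrades "each index divides $|G:H\cap K|$" to "the product of the indices divides it."
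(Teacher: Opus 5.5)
Your proof is correct, but it follows a different route from the paper's.

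The paper's proof is a single line: ``the index $|G:HK|$ divides both $|G:H|$ and $|G:K|$,'' so coprimality forces $|G:HK|=1$. Read literally, this treats $HK$ as a subgroup containing $H$ and $K$. For arbitrary subgroups, $HK$ is only a subset, and $|G|/|HK|$ need not be an integer: two distinct subgroups of order $2$ in $S_3$ give $|HK|=4$. So that divisibility cannot be invoked before one knows $HK$ is a subgroup. The step is fine when $HK=KH$, for instance if one of $H,K$ is normal, or in the cyclic groups where the lemma is mainly applied. It is not justified in the general setting in which the lemma is stated and later used (Remark~\ref{maximal}).

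You instead work with the genuine subgroup $H\cap K$. Both indices divide $|G:H\cap K|$ by Lagrange, coprimality upgrades this to divisibility by their product, and the product formula $|HK|=|H||K|/|H\cap K|$ turns that into $|HK|\ge |G|$. This is the standard argument behind the references the paper itself cites (Huppert I.2.13, Robinson 1.3.11). It holds for arbitrary subgroups at the cost of one extra line, whereas the paper's version is shorter but tacitly assumes what it needs.

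You also check that $H\neq K$ follows automatically from properness. The paper leaves this point implicit, and it is needed for the adjacency conclusion in a simple graph.
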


\begin{proof}
The index $|G:HK|$ divides both $|G:H|$ and $|G:K|$. Since $\gcd(|G : H|, |G : K|) = 1$, it follows that $|G:HK|=1$, that is, $HK = G$.
\end{proof}

The next observation explains why cliques appear naturally in factorization graphs.

\begin{remark}\label{maximal}
The maximal subgroups of a finite group do not necessarily induce a clique in
$\Gamma(G)$. In general, two distinct maximal subgroups $M$ and $N$ need not satisfy $MN=G$. However, this property does hold in several important situations considered later in this paper.

\begin{enumerate}
\item If the maximal subgroups have pairwise coprime indices, then Lemma \ref{coprime} implies that they are pairwise adjacent.

\item If the maximal subgroups are normal, then for any two distinct maximal
subgroups $M$ and $N$, the product $MN$ is a normal subgroup properly containing $M$, and therefore $MN=G$.
\end{enumerate}
\end{remark}

\subsection{Computational construction}
For completeness, we briefly describe a direct algorithmic approach to constructing the factorization graph of a finite group from its subgroup lattice.

Given a finite group $G$, the algorithm proceeds as follows: first, compute the list of all nontrivial proper subgroups of $G$; these form the vertex set of $\Gamma(G)$. For each unordered pair of distinct subgroups $H, K$, compute the product set $HK$ and check whether $HK = G$. If so, add an edge between the corresponding vertices.

This procedure is intentionally naive. If $|\mathscr{S}(G)|$ denotes the number of subgroups
of $G$, then the algorithm requires $O(|\mathscr{S}(G)|^2)$ subgroup products, each of which
may involve up to $|G|^2$ group operations. Even with extremely loose bounds, this illustrates
that the direct computation of $\Gamma(G)$ quickly becomes infeasible for large groups.

Nevertheless, such a construction is useful for experimental purposes and for visualizing
factorization graphs of small or moderately sized groups. An explicit implementation in
\textsc{SageMath} is provided in the appendix.

\section{The graph for cyclic groups of square-free order}

In this section, we study the factorization graph of cyclic groups of square-free order. This class provides a natural first test case, since the subgroup lattice of a cyclic group is completely determined by the divisors of its order, while the square-free assumption eliminates isolated vertices and yields a connected factorization graph. Our goal is to describe the global structure of $\Gamma(G)$ and determine several classical invariants such as planarity, girth, diameter, and pancyclicity.

\begin{remark}\label{HKsubgroup}
Let $G$ be a finite group and let $H, K\leq G$. The product set $HK$ is a subgroup of $G$ if and only if $HK=KH$. In this case, one has
\[HK=\langle H\cup K\rangle=\langle H,K\rangle.\]
In particular, if $\varphi:G_1\to G_2$ is a group isomorphism, then the map
$H\mapsto \varphi(H)$ induces an isomorphism $\Gamma(G_1)\cong \Gamma(G_2)$.
\end{remark}

\begin{lemma}\label{iso-lat}
If $G_1$ and $G_2$ are finite abelian groups with isomorphic subgroup lattices, then $\Gamma(G_1)\cong \Gamma(G_2)$.
\end{lemma}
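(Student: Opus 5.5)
Let $\phi:\mathscr{S}(G_1)\to\mathscr{S}(G_2)$ be a lattice isomorphism. The plan is to show that $\phi$ restricts to a bijection $\mathscr{V}(G_1)\to\mathscr{V}(G_2)$ that preserves adjacency in both directions. The key point is that in an abelian group every product of subgroups is a subgroup. So for $H,K\le G_i$ we have $HK=KH$, and by Remark~\ref{HKsubgroup}, $HK=\langle H,K\rangle$, which is the join $H\vee K$ in the subgroup lattice. Hence the condition $HK=G_i$ is purely lattice-theoretic: it says that $H\vee K$ is the top element of $\mathscr{S}(G_i)$.

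The steps are as follows.

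First, a lattice isomorphism preserves the least and greatest elements, since these are characterized order-theoretically. Thus $\phi(1)=1$ and $\phi(G_1)=G_2$. Because $\phi$ is a bijection, it maps $\mathscr{V}(G_1)=\mathscr{S}(G_1)\setminus\{1,G_1\}$ bijectively onto $\mathscr{V}(G_2)$.

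Second, $\phi$ preserves joins, because a lattice isomorphism preserves suprema. Therefore, for distinct $H,K\in\mathscr{V}(G_1)$,
\[HK=G_1 \iff H\vee K=G_1 \iff \phi(H)\vee\phi(K)=\phi(G_1)=G_2 \iff \phi(H)\phi(K)=G_2.\]
The last equivalence uses commutativity of $G_2$ and Remark~\ref{HKsubgroup} once more. Distinctness of $\phi(H)$ and $\phi(K)$ follows from injectivity. Hence $\phi$ is a graph isomorphism $\Gamma(G_1)\cong\Gamma(G_2)$.

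The only step requiring care is the identification of the set product $HK$ with the lattice join. This is exactly where the abelian hypothesis is needed, since for nonabelian groups $HK$ need not be a subgroup and adjacency is not determined by the lattice alone. Apart from that, the argument is formal. One could also remark that the same proof works whenever all subgroups of both groups permute pairwise (for example, Iwasawa groups), but this is not required.
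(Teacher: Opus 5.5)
Your proof is correct and follows the same route as the paper: in an abelian group $HK=\langle H,K\rangle$ is the lattice join, so adjacency is the lattice condition $H\vee K=G$, and a lattice isomorphism preserves this condition. Your version is slightly more complete than the paper's. You explicitly check that $\phi$ maps $\mathscr{V}(G_1)$ onto $\mathscr{V}(G_2)$ (via $\phi(1)=1$ and $\phi(G_1)=G_2$), and that adjacency is reflected as well as preserved; the paper leaves both points implicit.
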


\begin{proof}
Let $f$ be an isomorphism between the subgroup lattices of $G_1$ and $G_2$. If $H,K\in V(\Gamma(G_1))$ satisfy $G_1=HK$, then $G_2=f(G_1)=f(HK)$. Since $G_1$ and $G_2$ are abelian, for any subgroups $H, K$ one has $HK = \langle H, K\rangle$ (the join in the subgroup lattice). Since $f$ preserves joins, $f(HK) = f(\langle H, K\rangle) = \langle f(H), f(K)\rangle = f(H)f(K)$, and hence $f(H)f(K)=G_2$. Therefore $f$ induces an isomorphism $\Gamma(G_1)\cong \Gamma(G_2)$.
\end{proof}

\begin{theorem}\label{isomorfia} 
Let $G\cong C_n$ and $H\cong C_m$ be cyclic groups of square-free order. Then $\Gamma(G)\cong \Gamma(H)$ 
if and only if $\omega(n) = \omega(m)$, where $\omega(t)$ denotes the number of distinct prime divisors of $t$.
\end{theorem}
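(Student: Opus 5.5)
For the ``if'' direction we use Lemma~\ref{iso-lat}. Write $n=p_1\cdots p_r$ and $m=q_1\cdots q_r$ with distinct primes. The subgroups of a cyclic group correspond bijectively to the divisors of its order, with $U_d\leq U_e$ if and only if $d\mid e$. Since $n$ is square-free, the divisor lattice of $n$ is isomorphic to the Boolean lattice of subsets of $\{p_1,\dots,p_r\}$, via $d\mapsto\{p_i: p_i\mid d\}$. The same holds for $m$. Hence the bijection $p_i\mapsto q_i$ induces an isomorphism of subgroup lattices $\mathscr{S}(C_n)\cong\mathscr{S}(C_m)$, explicitly $U_d\mapsto U_{\varphi(d)}$ with $\varphi\bigl(\prod_{i\in I}p_i\bigr)=\prod_{i\in I}q_i$. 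Both groups are abelian, so Lemma~\ref{iso-lat} gives $\Gamma(C_n)\cong\Gamma(C_m)$. One small point: the lattice isomorphism sends $1$ to $1$ and the whole group to the whole group, so it restricts to the vertex sets.

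For the ``only if'' direction, suppose $\Gamma(C_n)\cong\Gamma(C_m)$, and set $r=\omega(n)$, $s=\omega(m)$. The simplest invariant to compare is the order: $|\mathscr{V}(C_n)|=2^{r}-2$, since there are $2^r$ divisors and we exclude $1$ and $n$. Isomorphic graphs have the same number of vertices, so $2^r-2=2^s-2$ and therefore $r=s$.

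We also have to handle degenerate cases. If $r=1$, then $n$ is prime and the vertex set is empty. The empty graph on $0$ vertices is isomorphic only to itself, which again forces $s=1$; the counting formula still covers this case, since $2^1-2=0$. The case $n=1$ ($r=0$) gives $2^0-2=-1$, which is meaningless. We would either exclude it, treating $C_1$ as having the empty vertex set, or note that the statement is intended for $n>1$.

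There is no serious obstacle here. The only step needing care is checking that the divisor-to-subset correspondence really is a lattice isomorphism, i.e.\ that $\gcd$ and $\lcm$ correspond to intersection and union. This holds precisely because $n$ is square-free, and this is where the hypothesis is used.
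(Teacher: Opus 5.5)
Your proof is correct. The ``if'' direction is the paper's idea in different packaging. The paper observes directly that, after identifying each subgroup with the set $S$ of primes dividing its order, $H_S\sim H_T$ holds iff $S\cup T=\{1,\dots,r\}$, so the graph depends only on $r$. You instead exhibit the Boolean-lattice isomorphism $\mathscr{S}(C_n)\cong\mathscr{S}(C_m)$ and invoke Lemma~\ref{iso-lat}, which encodes the same fact that products of subgroups in an abelian group are joins.

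The ``only if'' direction genuinely differs. The paper counts pendant vertices, arguing that the degree-one vertices are exactly the $r$ subgroups of prime order. You compare orders, $2^r-2=2^s-2$. Your invariant is coarser but cheaper: it needs only $\tau(n)=2^r$ and no adjacency computation. It also treats $r=1$ uniformly, whereas the paper's ``exactly $r$'' pendant vertices is off at $r=1$ (there are none, though the count as a function of $r\ge 1$ is still injective). The paper's route, for its part, fits into its later degree-sequence analysis and identifies which vertices witness $r$.

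One correction on the degenerate case. ``Treating $C_1$ as having the empty vertex set'' is not a way out: then $\Gamma(C_1)$ and $\Gamma(C_p)$ are both the empty graph, while $\omega(1)=0\neq 1=\omega(p)$. The statement is simply false for that pair, so one must assume $n,m>1$, as the paper does tacitly.
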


\begin{proof}
Suppose $n=p_1\cdots p_r$ and $m=q_1\cdots q_s$, where the $p_i$ and $q_j$ are distinct primes.

Assume first that $r=s$. Since every proper subgroup of $C_n$ corresponds uniquely to a proper subset of $\{p_1,\dots,p_r\}$, and adjacency in $\Gamma(C_n)$ is determined by whether the corresponding subsets together contain all prime divisors of $n$, it follows that the graph structure depends only on $r$. Hence $\Gamma(C_n) \cong \Gamma(C_m)$. 

Conversely, assume that $\Gamma(C_n) \cong \Gamma(C_m)$. Since graph isomorphisms preserve vertex degrees, they preserve the number of vertices of degree $1$. In $\Gamma(C_n)$, the vertices of degree $1$
are precisely the minimal subgroups (of prime order) of $C_n$, and there are exactly $r$
such subgroups. Similarly, $\Gamma(C_m)$ has exactly $s$ vertices of
degree $1$. Hence $r=s$, that is, $\omega(n)=\omega(m)$.
\end{proof}

\begin{remark}
Let $n$ be a positive integer. The radical $\rad(n)$ of $n$ is defined as the product of the distinct prime numbers dividing $n$. It is well known that the Frattini subgroup of the cyclic group $C_n$ of order $n$ is given by $\Phi(C_n) = C_{n/\rad(n)}$. In particular,  $\Phi(C_{p^a}) = C_{p^a-1}$, for a prime number $p$.
\end{remark}

\begin{lemma}\label{isolatednilpotent}
Let $G$ be a finite nilpotent group and let $p_1,\dots,p_r$ be the distinct prime divisors of $|G|$.
For each $j$, let $P_j\in \operatorname{Syl}_{p_j}(G)$ and set $a_j := |\mathscr{S}(\Phi(P_j))|$. Then the number of isolated vertices of $\Gamma(G)$ satisfies
\[|\mathrm{Iso}(\Gamma(G))| \geq \Bigg(\prod_{j=1}^r a_j\Bigg)-1.\]
Moreover, if $G$ is cyclic, then equality holds, and if $|G|$ is square-free, then $\Gamma(G)$ is connected.
\end{lemma}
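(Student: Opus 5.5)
The plan is to identify a concrete family of isolated vertices coming from the Frattini subgroup. Since $G$ is nilpotent, $G=P_1\times\cdots\times P_r$ and $\Phi(G)=\Phi(P_1)\times\cdots\times\Phi(P_r)$. Every subgroup of $\Phi(G)$ decomposes as a direct product of subgroups of the $\Phi(P_j)$, because subgroups of a nilpotent group split along Sylow components. Hence $|\mathscr{S}(\Phi(G))|=\prod_j a_j$.

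\emph{Step 1: nontrivial subgroups of $\Phi(G)$ are isolated.} Let $1<H\le\Phi(G)$; note $H$ is proper since $\Phi(G)<G$. Suppose $HK=G$ for some vertex $K$. Then $G=\langle H,K\rangle\le\langle\Phi(G),K\rangle$. Since $\Phi(G)$ consists of non-generators, $\langle K\rangle=G$, which contradicts $K<G$. So $H$ is isolated. This gives $\prod_j a_j-1$ isolated vertices, the $-1$ discarding the trivial subgroup, and proves the inequality.

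\emph{Step 2: equality in the cyclic case.} Let $G=C_n$ with $n=\prod p_j^{e_j}$. Then $\Phi(G)$ is the unique subgroup of order $n/\rad(n)$. I must show every isolated vertex $H$ lies in $\Phi(G)$. Suppose $H\not\le\Phi(G)$. Then $|G:H|$ is not divisible by some $p_j$, because $H\le \Phi(G)$ iff $\rad(n)$ divides $|G:H|$; by cyclicity, containment in $\Phi(G)$ is divisibility of orders. Take $K$ to be the subgroup of index $p_j^{e_j}$, the Hall $p_j'$-part. Then $\gcd(|G:H|,|G:K|)=1$, so $HK=G$ by Lemma \ref{coprime}.

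It remains to ensure $K\ne H$ and that $K$ is a nontrivial proper subgroup.
\begin{itemize}
\item $K$ is proper since $e_j\ge1$.
\item $K$ is nontrivial unless $G$ is a $p$-group. In that case $r=1$ and $|G:H|$ is not divisible by $p$ forces $H=G$, which is impossible. So $K\neq 1$.
\item If $K=H$, then $|G:H|=p_j^{e_j}$ is divisible by $p_j$, contradicting the choice of $p_j$.
\end{itemize}
Hence $H$ is not isolated, and the isolated vertices are exactly the nontrivial subgroups of $\Phi(G)$, giving equality.

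\emph{Step 3: connectivity for square-free $|G|$.} Now $G$ is cyclic of square-free order, so $\Phi(G)=1$ and there are no isolated vertices. For connectivity, identify vertices with proper nonempty subsets $S\subsetneq\{p_1,\dots,p_r\}$ via the order. Adjacency means the union of the two subsets is everything.
\begin{itemize}
\item Each $S$ is adjacent to its complement.
\item Each $S$ is adjacent to every maximal subgroup of index $p_i$ with $p_i\in S$.
\item Two maximal subgroups are mutually adjacent by Remark \ref{maximal}.
\end{itemize}
So every vertex is at distance at most $1$ from the clique of maximal subgroups, and the graph is connected. The case $r=1$ has an empty vertex set and is trivially handled; for $r\ge2$ the argument works.

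The main obstacle is Step 2: justifying the converse containment cleanly and handling the degenerate $p$-group and $K=H$ cases.
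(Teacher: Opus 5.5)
Your proof is correct, and its skeleton matches the paper's. Both identify the isolated vertices with the nontrivial subgroups of $\Phi(G)$ and count them using $\Phi(G)\cong\prod_j\Phi(P_j)$.

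The difference is where the identification comes from. The paper quotes Theorem~2.2 of Das et al.\ for it and dismisses connectivity as clear. You prove everything directly:
\begin{itemize}
\item the inclusion $\mathscr{S}(\Phi(G))\setminus\{1\}\subseteq\mathrm{Iso}(\Gamma(G))$ follows from the non-generator property, and this holds for every finite group;
\item the reverse inclusion you prove only for cyclic $G$, via Lemma~\ref{coprime} applied to the subgroup of index $p_j^{e_j}$.
\end{itemize}

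Your route is self-contained and elementary. The quoted characterization is stronger, because the reverse inclusion holds for every nilpotent $G$. If $H\not\le\Phi(G)$, pick a maximal $M$ with $H\not\le M$. Since $G$ is nilpotent, $M$ is normal, so $HM$ is a subgroup properly containing $M$, and hence $HM=G$. Note also that $M\neq 1$ unless $|G|$ is prime, in which case there are no vertices. Consequently the stated inequality is in fact an equality for all nilpotent groups, and your Step~2 could be replaced by this one-line argument.

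One small omission: in Step~3 you should say why $G$ is cyclic. A nilpotent group of square-free order is the direct product of its Sylow subgroups, and these have prime, pairwise coprime orders.
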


\begin{proof} 
By \cite[Theorem 2.2.]{Das}, $\mathscr{S}(\Phi(G))-\{1\}$ is the collection of isolated vertices of  $\Gamma(G)$. Thus, since $\Phi(G)\cong \prod_{j=1}^{r}\Phi(P_{j})$, then $ |\mathscr{S}(\Phi(G))-\{1\}|\geq  (\prod_{j=1}^r a_{j}) -1$. In particular, if $G$ is cyclic, $\Phi(G)\cong C_{n/\rad(n)}$ and so $ |\mathscr{S}(C_{n/\rad(n)})-\{1\}|=$ $(\prod_{j=1}^r a_j) -1$. The rest is clear.
\end{proof}

For the remainder of this section, let $G=\langle g\rangle\cong C_{p_1\cdots p_r}$ be cyclic of square-free order. For each $1\leq j\leq r$, set $M_j:=\langle g^{p_j}\rangle$. Note that $M_1,\ldots, M_r$ are precisely the maximal subgroups of $G$. Since their indices are the distinct primes $p_1,\ldots,p_r$, Lemma \ref{coprime} implies that $M_i M_j=G$ whenever $i\neq j$. Hence $M=\{M_1,\ldots, M_r\}$ induces a clique in $\Gamma(G)$.

\begin{example}\label{Cpq}
The factorization graph of a cyclic group of non-square-free order is disconnected; indeed, it contains isolated vertices \cite[Theorem 2.2]{Das}. 
\end{example}

\begin{remark}\label{rge3}
If $r=1$, then $G\cong C_p$ has no nontrivial proper subgroups, and hence $\Gamma(G)$ has no vertices. If $r=2$, then $G\cong C_{pq}$ has exactly two nontrivial proper subgroups, and therefore $\Gamma(G)\cong K_2$. If $r=3$, then $G\cong C_{pqr}$ and $\Gamma(G)$ is the sunlet graph $L_6$. The following Figures show these two graphs.
 
\begin{figure}[H]
\label{F4}
\begin{center}
		\begin{tikzpicture}[mynode/.style={font=\color{#1}\sffamily,circle,inner sep=2pt,minimum size=0.2cm,draw,thick}] 
		\node[mynode=yellow,fill=] (q) at (-5,0) {}; 
		\node[mynode=yellow,fill=] (p) at (-5,2) {};
		\draw[thick] (p) -- (q)	;
		\node [below] at (-5,0) {$\langle g^{p}\rangle$}; 
		\node [above] at (-5,2) {$\langle g^{q}\rangle$}; 
			\node[mynode=yellow,fill=] (r) at (-1,1) {}; 
			\node[mynode=yellow,fill=] (p) at (-2,1) {};
			\node[mynode=yellow,fill=] (q) at  (-1.5,0)  {}; 
			\node[mynode=yellow,fill=] (pq) at (-0.5,2) {}; 
			\node[mynode=yellow,fill=] (qr) at (-2.5,2) {};
			\node[mynode=yellow,fill=] (pr) at  (-1.5,-1)  {};  
			\draw[thick] (pq) -- (r) -- (q) -- (p) -- (r)  (pr) -- (q) (qr) -- (p)	;
			\node [left] at (-2,1) {$\langle g^{p}\rangle$}; 
			\node [above] at (-2.5,2) {$\langle g^{qr}\rangle$}; 
			\node [right] at (-1,1) {$\langle g^{q}\rangle$}; 
			\node [above] at (-0.5,2) {$\langle g^{pr}\rangle$}; 
			\node [below] at (-1.5,-1) {$\langle g^{pq}\rangle$}; 
			\node [left] at (-1.5,0) {$\langle g^{r}\rangle$};  
		\end{tikzpicture}
	\caption{Factorization graph of $C_{pq}$ and $C_{pqr}$}
	\end{center}
\end{figure}
Thus, throughout the remainder of this section, we assume $r\geq 4$.
\end{remark}

The following theorem summarizes the main structural properties of the factorization graph
of cyclic groups of square-free order. The remainder of the section is devoted to its proof.

\begin{theorem}[Main theorem]\label{squarefree}
Let $G\cong C_{p_1p_2\cdots p_r}$ be cyclic of square-free order, where $p_1,\dots,p_r$ are distinct primes.

\begin{enumerate}
\item The graph $\Gamma(G)$ has order $|V(\Gamma(G))|=2^r-2$ and minimum degree $\delta(\Gamma(G))=1$.
Moreover, $\Delta(\Gamma(G))=2^{r-1}-1$. 

\item If $r\geq 4$, then $\Gamma(G)$ is non-planar.

\item The girth of $\Gamma(G)$ is $3$ for all $r\geq 3$, and $\Gamma(G)$ is claw-free if and only if $r\leq 3$.

\item The center of $\Gamma(G)$ is the clique induced by the maximal subgroups, and $\mathrm{rad}(\Gamma(G))=2$, while $\mathrm{diam}(\Gamma(G))=3$.

\item If $r \geq 4$, let $\Gamma'(G)$ be the induced subgraph obtained by deleting the vertices of degree $1$. Then $\Gamma'(G)$ is $2$-connected and contains cycles of every length from $3$ to $r$. Moreover, $\Gamma'(G)$ is Hamiltonian, with $C(\Gamma(G)) = 2^r - r - 2$, when $r = 4, 5$; this equality does not extend to $r \ge 6$.
\end{enumerate}
\end{theorem}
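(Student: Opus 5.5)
We work throughout in the combinatorial model of Theorem~\ref{isomorfia}. Send each $H\in\mathscr{V}(G)$ to the set $A_H=\{\,i : p_i \mid |G:H|\,\}$. This is a bijection onto the nonempty proper subsets of $[r]=\{1,\dots,r\}$. Since $|G:HK|$ is the gcd of the two indices in a cyclic group (compare Lemma~\ref{coprime}), we get $H\sim K$ if and only if $A_H\cap A_K=\emptyset$. So $\Gamma(G)$ is the ``disjointness graph'' on nonempty proper subsets of $[r]$. Every part of the theorem becomes a finite combinatorial check in this model. For $r\le 3$ we simply invoke Remark~\ref{rge3}.

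\textbf{Parts (1)--(4).}
\begin{enumerate}
\item The order is $2^r-2$. A vertex $A$ is adjacent to every nonempty subset of $[r]\setminus A$; such a subset is automatically proper and different from $A$. Hence $\deg A=2^{r-|A|}-1$. This degree is maximal, equal to $2^{r-1}-1$, for the singletons (the $M_j$), and minimal, equal to $1$, for the $(r-1)$-sets (the subgroups of prime order).
\item For non-planarity when $r\ge4$, we exhibit $K_{3,3}$ with parts $\{\{1\},\{2\},\{1,2\}\}$ and $\{\{3\},\{4\},\{3,4\}\}$.
\item The girth is $3$ because $\{1\},\{2\},\{3\}$ form a triangle. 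For $r\ge4$, the star with center $\{1\}$ and leaves $\{2,3\},\{2,4\},\{3,4\}$ is an induced claw, since the leaves pairwise intersect. For $r\le3$, claw-freeness is read off from $K_2$ and $L_6$.
\item Let $A,C$ be two vertices.
\begin{itemize}
\item If $A\cup C\neq[r]$, pick $j\notin A\cup C$. Then $\{j\}$ is a common neighbour, so $d(A,C)\le 2$.
\item Otherwise, pick $x\notin A$ and $y\notin C$ with $x\ne y$. This choice is possible unless $A=C$. Then $A\sim\{x\}\sim\{y\}\sim C$, so the diameter is at most $3$.
\end{itemize}
For a singleton $\{i\}$ and any $A$, we have $\{i\}\cup A\neq[r]$ because $|A|\le r-1$ and $r\ge3$. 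So $\operatorname{ecc}(\{i\})\le 2$, and equality holds against $\{i,j\}$. If $|A|\ge2$, choose $a\in A$ and set $C=[r]\setminus\{a\}$. Then $C\ne A$, $C\not\sim A$, and $A\cup C=[r]$, so there is no common neighbour and $d(A,C)=3$. This gives center equal to the clique $M$, radius $2$ and diameter $3$.
\end{enumerate}

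\textbf{Part (5).} $\Gamma'(G)$ is the disjointness graph on subsets of size $1,\dots,r-2$.
\begin{itemize}
\item Cycles of length $3$ to $r$ come from the clique of $r$ singletons.
\item For $2$-connectivity, we argue that deleting any one vertex $X$ leaves the graph connected. Every vertex $A$ with $|A|\le r-2$ has at least two singleton neighbours $\{x\},\{y\}$ with $x,y\notin A$. The singletons other than $X$ still form a clique of size at least $r-1\ge3$, so every remaining vertex keeps a neighbour in that clique.
\item For Hamiltonicity when $r=4,5$, we write down explicit Hamiltonian cycles. For $r=4$, each pair has neighbours exactly its complementary pair and the two complementary singletons; route the cycle as singleton, pair, complementary pair, singleton, and so on. For $r=5$, the $2$- and $3$-sets are threaded similarly through the five singletons. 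This yields $C(\Gamma(G))=2^r-r-2$, since no cycle can use a pendant vertex.
\end{itemize}

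\textbf{Main obstacle: $r\ge6$.} The step I expect to be hardest is showing non-Hamiltonicity of $\Gamma'(G)$ for $r\ge6$, and hence $C(\Gamma(G))<2^r-r-2$. The plan is a toughness argument: find $S$ with $\Gamma'-S$ having more than $|S|$ components, since a Hamiltonian graph cannot have this. For $r=6$, take $S$ to be the sets of size $1$ and $2$, so $|S|=21$. What remains is the $15$ isolated $4$-sets together with the $10$ complementary pairs of $3$-sets, giving $25>21$ components. For general $r$, take $S$ to be all sets of size $<r/2$. The sets of size $>r/2$ become isolated, and the $r/2$-sets pair up when $r$ is even. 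One then needs the binomial inequality
\[\sum_{k>r/2}^{r-2}\binom rk+\tfrac12\binom r{r/2}\,[r \text{ even}] \;>\; \sum_{k<r/2}\binom rk-1.\]
This inequality may fail for odd $r$, because of the missing $(r-1)$-sets. In that case I would instead remove only the sets of size $\le\lfloor (r-1)/2\rfloor-1$ together with a suitable selection of the $\lfloor (r-1)/2\rfloor$-sets, and verify the resulting count.
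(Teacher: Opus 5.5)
Your argument is sound in outline, and it differs from the paper's in ways that mostly favour you. You work uniformly in the disjointness model on index sets. The paper invokes that model only for Theorem~\ref{isomorfia} and, in complemented form $S\cup T=[r]$, for the claw.

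\begin{itemize}
\item \emph{Non-planarity.} Your $K_{3,3}$ on $\{1\},\{2\},\{1,2\}$ versus $\{3\},\{4\},\{3,4\}$ is genuine. The one in the proof of Theorem~\ref{planarity} is not: $\langle g^{p_4p_i}\rangle\le\langle g^{p_i}\rangle$, so $H_iK_i=H_i\neq G$. Moreover the $H_i$, being maximal, are pairwise adjacent.
\item \emph{Center.} Your witness $[r]\setminus\{a\}$ with $a\in A$ actually proves $\operatorname{ecc}(A)=3$ for non-singletons. The paper's case analysis only asserts this.
\item \emph{$2$-connectivity.} Your one-vertex-deletion argument replaces the paper's common-cycle case analysis.
\item \emph{Hamiltonicity in (5).} The paper supports these claims only by the computed table in Remark~\ref{rem:4.18-limits}, which stops at $r=7$. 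Your toughness argument proves non-Hamiltonicity for every even $r=2m\ge 6$. Indeed, by $\binom rk=\binom r{r-k}$ your inequality reduces to $\tfrac12\binom{2m}{m}>2m$, which holds for $m\ge 3$.
\end{itemize}

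Two steps of (5) are still not proofs.

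\textbf{The case $r=5$.} You give no cycle. Your $r=4$ pattern (three complementary couples between four singletons, hence one singleton--singleton edge) cannot be copied. Any two $3$-sets meet. A $3$-set $T$ is adjacent only to $\overline T$ and the two singletons in $\overline T$. A $2$-set $P$ has $\overline P$ as its only $3$-set neighbour. So at most $10$ of the $20$ cycle edges at $3$-sets reach $2$-sets, and both cycle-neighbours of every singleton must be $3$-sets. A Hamiltonian cycle is obtained by concatenating, for $i=1,\dots,5$ (indices mod $5$, complements in $[5]$), the blocks
\[
\{i\},\ \overline{\{i,i+3\}},\ \{i,i+3\},\ \{i+1,i+2\},\ \overline{\{i+1,i+2\}} ,
\]
each block's last vertex being adjacent to the next block's first vertex $\{i+1\}$.

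\textbf{Odd $r=2m+1\ge 7$.} The missing idea is the selection. With $S$ equal to all sets of size $\le m$, your count always falls short by exactly $r$. Now put back the $\binom{2m}{m-1}$ $m$-sets containing $1$. They are pairwise non-adjacent. In the new graph each is adjacent only to its complementary $(m+1)$-set, which was previously an isolated vertex. So the number of components is unchanged while $|S|$ drops by $\binom{2m}{m-1}$. Hence
\[
c(\Gamma'(G)-S)-|S|=\binom{2m}{m-1}-(2m+1)>0 \qquad (m\ge 3),
\]
for example $56>48$ when $r=7$.

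\textbf{A minor slip in (4).} The claim $\{i\}\cup A\ne[r]$ fails for $A=[r]\setminus\{i\}$. But that $A$ is adjacent to $\{i\}$, so $\operatorname{ecc}(\{i\})\le 2$ still stands.
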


\begin{proof}
The proof is given throughout the section.
\end{proof}

\begin{remark}
The remainder of the section is devoted to proving Theorem \ref{squarefree}.
For convenience, the proofs are presented as a sequence of lemmas and corollaries.
\end{remark}

\begin{theorem}\label{planarity}
If $r\geq 4$, then $\Gamma(G)$ is non-planar.
\end{theorem}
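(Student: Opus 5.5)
The plan is to exhibit a subgraph homeomorphic to (in fact, equal to) $K_5$ or $K_{3,3}$ inside $\Gamma(G)$ and then apply Kuratowski's theorem. Identify each nontrivial proper subgroup of $G=C_{p_1\cdots p_r}$ with the set $S\subseteq\{1,\dots,r\}$ of indices $j$ such that $p_j$ divides $|G:H|$. With this identification, $M_j$ corresponds to $\{j\}$. Two subgroups $H,K$ satisfy $HK=G$ exactly when $\gcd(|G:H|,|G:K|)=1$, since $|G:HK|=\gcd$ of the indices in a cyclic group. Equivalently, the corresponding index sets are disjoint. Here $S$ ranges over the nonempty proper subsets of $\{1,\dots,r\}$.

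It suffices to treat $r=4$ explicitly. For $r\ge 4$, the subgroup lattice of $G$ contains that of a cyclic group of order $p_1p_2p_3p_4$: we can take the subsets contained in $\{1,2,3,4\}$, and disjointness is unaffected by the ambient index set. Hence $\Gamma(C_{p_1p_2p_3p_4})$ occurs as an induced subgraph of $\Gamma(G)$, apart from the vertex $\{1,2,3,4\}$, which is only a vertex when $r>4$ and which we do not need.

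For $r=4$ we look for a $K_{3,3}$ using index sets. Take the part $A=\{\{1\},\{2\},\{1,2\}\}$ and the part $B=\{\{3\},\{4\},\{3,4\}\}$. Every set in $A$ is disjoint from every set in $B$, so every $A$–$B$ pair is adjacent. This gives a $K_{3,3}$ subgraph, and we do not need it to be induced. In group terms, $A$ consists of $M_1$, $M_2$ and $\langle g^{p_1p_2}\rangle$, while $B$ consists of $M_3$, $M_4$ and $\langle g^{p_3p_4}\rangle$. Each product lies in the scope of Lemma \ref{coprime}, since the indices are coprime; for example, $|G:\langle g^{p_1p_2}\rangle|=p_1p_2$ and $|G:M_3|=p_3$. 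By Kuratowski's theorem $\Gamma(G)$ is non-planar. Alternatively, one can observe directly that a graph containing $K_{3,3}$ as a subgraph is non-planar.

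There is no real obstacle. The only point to write carefully is the translation between subgroups and the indices given by products of primes. Lemma \ref{coprime} supplies each required edge directly, so we do not even need the converse direction of the adjacency criterion.
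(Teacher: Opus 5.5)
Your proof is correct, and it follows the same plan as the paper: exhibit a $K_{3,3}$ among subgroups determined by four of the primes, then apply the easy direction of Kuratowski's theorem.

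Your choice of the two triples is what actually makes the argument work, because the paper's explicit witness is faulty. The paper takes $H_i=\langle g^{p_i}\rangle$ and $K_j=\langle g^{p_4p_j}\rangle$ for $1\le i,j\le 3$. But $K_i\le H_i$, so $H_iK_i=H_i\neq G$. Moreover, the $H_i$ are pairwise adjacent, being maximal subgroups with coprime indices. Those six vertices therefore induce a hexagon $H_1K_2H_3K_1H_2K_3$ with the triangle $H_1H_2H_3$ as chords, and that graph is planar.

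Your parts have index sets $\{1\},\{2\},\{1,2\}$ and $\{3\},\{4\},\{3,4\}$. These are pairwise disjoint across the bipartition, so all nine required edges are present, each one supplied by Lemma~\ref{coprime}.

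A minor remark: your reduction to $r=4$ via the induced copy of $\Gamma(C_{p_1p_2p_3p_4})$ is correct but unnecessary. The same six subgroups are nontrivial proper subgroups of $C_{p_1\cdots p_r}$ for every $r\ge 4$, and the same coprimality applies.
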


\begin{proof}
Assume first that $r=4$, so $G\cong C_{p_1p_2p_3p_4}$. Consider the six subgroups
\[
H_i=\langle g^{p_i}\rangle\quad (i=1,2,3),
\qquad
K_j=\langle g^{p_4p_j}\rangle\quad (j=1,2,3).
\]
A direct verification shows that $H_iK_j=G$ for all $1\leq i,j\leq 3$.
Moreover, there are no edges among $\{H_1,H_2,H_3\}$, and similarly no edges among
$\{K_1,K_2,K_3\}$. Hence, the induced subgraph on these six vertices is isomorphic to $K_{3,3}$.
By Kuratowski's theorem, $\Gamma(G)$ is non-planar.

Now assume $r>4$. Choosing any four primes among $\{p_1,\dots,p_r\}$, the same construction yields
a subgraph isomorphic to $K_{3,3}$ inside $\Gamma(G)$. Therefore, $\Gamma(G)$ is non-planar for all
$r\geq 4$.
\end{proof}

\begin{lemma}\label{complemento-cyc}
Let $G=\langle g\rangle\cong C_{p_1\cdots p_r}$ be cyclic of square-free order.
\begin{enumerate}
\item If $H\leq G$ is a subgroup of order $|H| = p_{i_1}p_{i_2}\cdots p_{i_k}$,  with $1\leq k\leq r-1$, then there exists a subgroup $K\leq G$ of order
\[|K|= |G|/|H| = \frac{p_1\cdots p_r}{p_{i_1}\cdots p_{i_k}}\]
such that $HK = G$.

\item Let $A\in V(\Gamma(G))\setminus \mathcal{M}$ and write $A =\langle g^{p_{i_1}\cdots p_{i_k}}\rangle$ with $1<k<r$. Then $A$ is connected to exactly $r-k$ vertices of $\mathcal{M}$.
\end{enumerate}
\end{lemma}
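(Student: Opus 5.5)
Both parts reduce to the standard correspondence between subgroups of the cyclic group $G=\langle g\rangle$ of square-free order $n=p_1\cdots p_r$ and subsets of $\{p_1,\dots,p_r\}$. For each divisor $d$ of $n$ there is a unique subgroup $U_d$ of order $d$. Since $G$ is abelian, Remark~\ref{HKsubgroup} gives $HK=\langle H,K\rangle$, so
\[
|HK|=\frac{|H||K|}{|H\cap K|}=\lcm(|H|,|K|).
\]
Because $n$ is square-free, $HK=G$ holds if and only if every prime $p_j$ divides $|H|$ or $|K|$. Equivalently, the sets of primes dividing $|H|$ and $|K|$ together cover $\{p_1,\dots,p_r\}$. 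I will record this criterion first and then use it for both parts.

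\textbf{Part (1).} Let $|H|=p_{i_1}\cdots p_{i_k}$ with $1\le k\le r-1$. Take $K=U_{n/|H|}$, the unique subgroup of order $n/|H|$; it is a nontrivial proper subgroup since $1\le k\le r-1$. Square-freeness gives $\gcd(|H|,|K|)=1$, so $H\cap K=1$ and $|HK|=|H||K|=n$, hence $HK=G$. Alternatively, the indices $|G:H|=|K|$ and $|G:K|=|H|$ are coprime, and Lemma~\ref{coprime} applies directly. Either way, $H\ne K$ and the two are adjacent.

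\textbf{Part (2).} Write $A=\langle g^{p_{i_1}\cdots p_{i_k}}\rangle$, so $|G:A|=p_{i_1}\cdots p_{i_k}$ and $|A|=n/(p_{i_1}\cdots p_{i_k})$. The primes missing from $|A|$ are exactly $p_{i_1},\dots,p_{i_k}$. For the maximal subgroup $M_j=\langle g^{p_j}\rangle$, the only prime missing from $|M_j|$ is $p_j$. By the criterion, $AM_j=G$ if and only if no prime is missing from both $|A|$ and $|M_j|$, that is, if and only if $p_j\notin\{p_{i_1},\dots,p_{i_k}\}$. This leaves exactly $r-k$ choices of $j$. Since $k>1$, $A$ is not itself maximal, so all these $M_j$ are distinct from $A$ and each gives a genuine edge. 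Thus $A$ has exactly $r-k$ neighbours in $\mathcal{M}$.

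The argument is routine. The only point needing care is the index/order bookkeeping: the exponent in $\langle g^{d}\rangle$ is the index, not the order. Getting this backwards would swap $k$ and $r-k$, so I will state the convention $|G:\langle g^d\rangle|=d$, as in the preliminaries, explicitly before the count.
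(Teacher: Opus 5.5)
Your proof is correct and follows essentially the paper's route: part (1) uses the coprime indices $|G:H|=|K|$ and $|G:K|=|H|$ (Lemma~\ref{coprime}), and part (2) uses the order formula $|AM_j|=|A||M_j|/|A\cap M_j|$. The difference is in your favour: you package the computation as the biconditional $HK=G \iff \mathrm{lcm}(|H|,|K|)=|G|$, so you also show that $A$ is \emph{not} adjacent to the $k$ maximal subgroups $M_j$ with $p_j\in\{p_{i_1},\dots,p_{i_k}\}$ (there $A\le M_j$), which ``exactly'' $r-k$ requires and which the paper's proof leaves implicit, verifying only the $r-k$ adjacencies.
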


\begin{proof}
(1) Let $K$ be the unique subgroup of $G$ of order $|G|/|H|$.
Since $|G:H|=|K|$ and $|G:K|=|H|$ are coprime (because $|G|$ is square-free), Lemma \ref{coprime}  yields $HK = G$.

(2) Let $A=\langle g^{p_{i_1}\cdots p_{i_k}}\rangle$ with $1<k<r$. If $p_j\notin \{p_{i_1},\dots,p_{i_k}\}$ and we set $B_j =\langle g^{p_j}\rangle\in \mathcal{M}$, then $|A| = |G|/p_{i_1}\cdots p_{i_k}$, and $|B_j| = |G|/p_j$. Moreover, since $p_j$ is not among the primes defining $A$, we have $A\cap B_j = \langle g^{p_{i_1}\cdots p_{i_k}p_j}\rangle$ and hence $|A\cap B_j|= |G|/p_{i_1}\cdots p_{i_k}p_j$. Hence
\[|AB_j| = \frac{|A||B_j|}{|A\cap B_j|} = |G|.\]
So $AB_j=G$ and therefore $A \sim B_j$ in $\Gamma(G)$. Since there are exactly $r-k$ primes not appearing in the product $p_{i_1}\cdots p_{i_k}$, the claim follows.
\end{proof}

\begin{proposition}\label{basic-invariants}
Let $G\cong C_{p_1\cdots p_r}$ be cyclic of square-free order.
\begin{enumerate}
\item $\delta(\Gamma(G))=1$.
\item $\Delta(\Gamma(G))=2^{r-1}-1$.
\item The degree sequence of $\Gamma(G)$ is
\[(\underbrace{2^{r-1}-1,\dots,2^{r-1}-1}_{\binom{r}{1}}, \underbrace{2^{r-2}-1,\dots,2^{r-2}-1}_{\binom{r}{2}}, \dots,
\underbrace{1,\dots,1}_{\binom{r}{r-1}}). \]
In particular, if $r\geq 3$ then $\Gamma(G)$ is a non-regular graph.
\item $\Gamma(G)$ has order $\sum_{j=1}^{r-1}\binom{r}{j}=2^r-2$
and size $\frac{1}{2}\sum_{j=1}^{r-1}\binom{r}{j}\,(2^{r-j}-1)$.
\item If $r\geq 3$, then $\Gamma(G)$ is neither Eulerian nor Hamiltonian.
\end{enumerate}
\end{proposition}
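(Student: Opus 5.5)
The plan is to encode every vertex by a set of primes and then read off each invariant by counting. Identify the subgroup of order $\prod_{i\in S}p_i$ with $S$, a nonempty proper subset of $[r]=\{1,\dots,r\}$. This is the unique subgroup of that order.

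\textbf{Adjacency criterion.} Because the group is abelian, $HK=\langle H,K\rangle$, and this is the subgroup indexed by $S\cup T$. Hence $H_S\sim H_T$ if and only if $S\cup T=[r]$ and $S\neq T$. This is the same criterion used in the proof of Theorem~\ref{isomorfia}.

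\textbf{Degrees.} Fix $S$ with $|S|=j$. A neighbour $T$ must contain the complement $[r]\setminus S$. It may contain any subset of $S$, except that $T\neq[r]$, so $T$ cannot be all of $S\cup([r]\setminus S)$. Also $T\neq S$, and this is automatic, since $S\supseteq[r]\setminus S$ would force $S=[r]$. Counting gives $\deg(H_S)=2^{j}-1$.

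With $j=r-1$ (the maximal subgroups) this gives $2^{r-1}-1$. With $j=1$ (the subgroups of prime order) it gives $1$. This yields (1) and (2), and the degree sequence (3) follows by grouping the vertices by $j$, with $\binom{r}{j}$ vertices in each class. For non-regularity, when $r\ge3$ both degrees $1$ and $2^{r-1}-1\ge3$ occur. The order in (4) is $\sum_{j=1}^{r-1}\binom rj=2^r-2$. The size follows from the handshake lemma: it is half the sum of degrees, $\tfrac12\sum_j\binom rj(2^{j}-1)$. By the symmetry $j\leftrightarrow r-j$ of the binomial coefficients this equals the stated expression $\tfrac12\sum_j\binom rj(2^{r-j}-1)$.

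The indexing needs care here. The paper's listing attaches degree $2^{r-1}-1$ to the $\binom r1$ maximal subgroups, so it indexes vertices by their index (the complement of $S$) rather than their order. I will state this convention explicitly so that the displayed sequence matches.

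\textbf{Eulerian and Hamiltonian (5).} For $r\ge3$ there are degree-$1$ vertices. A vertex of degree $1$ has odd degree, so the graph is not Eulerian. It also cannot lie on a cycle, so the graph is not Hamiltonian.

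The only real subtlety is matching the subset indexing to the paper's conventions; the rest is routine counting.
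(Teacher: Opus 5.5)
Your proposal is correct and follows essentially the paper's argument: encode subgroups by prime subsets, use the union criterion for $HK=G$, count neighbours, and finish with the handshake lemma and the existence of pendant vertices. Your uniform count $\deg(H_S)=2^{|S|}-1$ is slightly more complete than the paper, which computes only the two extreme degrees and then asserts in part (3) that subgroups whose \emph{order} has $j$ prime factors have degree $2^{r-j}-1$ (this should read \emph{index}); that slip is harmless precisely because of the symmetry $\binom{r}{j}=\binom{r}{r-j}$ that you invoke.
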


\begin{proof} 
(1) We define $k_j := \prod_{j\neq i=1}^r p_i$  for $j \in \{1,\ldots, r\}$ and consider the subgroup $A_j := \gen{g^{k_j}}$. A straightforward verification shows that $A_j$ is adjacent only to $\gen{g^{p_j}}$. Therefore, $\deg(A_j) = 1$.

\noindent $(2)$ Let $B_j :=\gen{g^{p_j}}$, then $B_j$ is adjacent to every subgroup of the form $\gen{g^{l_j}}$, where $l_j=p_i$ for all $i\neq j$ or $l_j$ is a product of primes of $\{p_1,\ldots, p_r\}\setminus \{p_j\}$. That is,
\begin{equation}\label{max-degree}
\deg(B_j) = \sum_{k=1}^{r-1}  \tbinom{r-1}{k}  = 2^{r-1}-1.
\end{equation}

\noindent $(3)$ For each $1\leq j\leq r-1$, there are exactly $\binom{r}{j}$ subgroups of $G$ whose order is a product of exactly $j$ distinct primes among $\{p_1,\dots,p_r\}$. By part (ii), each such subgroup has degree $2^{\,r-j}-1$. Hence the degree sequence of $\Gamma(G)$ is
\[(\underbrace{2^{r-1}-1,\dots,2^{r-1}-1}_{\binom{r}{1}}, \underbrace{2^{r-2}-1,\dots,2^{r-2}-1}_{\binom{r}{2}}, \dots,
\underbrace{1,\dots,1}_{\binom{r}{r-1}}), \]
as claimed. The non-regularity of $\Gamma(G)$ is a direct consequence of the degree sequence. 

\noindent $(4)$  Since every nontrivial proper subgroup of $G$ corresponds to a nonempty proper subset
of $\{p_1,\dots,p_r\}$, we obtain $|V(\Gamma(G))| = \sum_{j=1}^{r-1}\binom{r}{j} = 2^r-2$.
Moreover, by the Handshaking Theorem, $2|E(\Gamma(G))|= \sum_{j=1}^{r-1}\binom{r}{j}(2^{r-j}-1)$. 
\smallskip
\noindent $(5)$ Note that every vertex of $\Gamma(G)$ has odd degree, and therefore $\Gamma(G)$
cannot be Eulerian. Moreover, $\Gamma(G)$ contains a vertex of degree $1$, so it cannot be Hamiltonian.
\end{proof}

\begin{proposition}\label{smalls}
Let $G\cong C_{p_1\cdots p_r}$ be cyclic of square-free order, and assume $r\geq 3$.
\begin{enumerate}
\item $\Gamma(G)$ contains a unique subgraph isomorphic to $K_r\odot K_1$.
\item $\Gamma(G)$ contains a unique sunlet graph of order $2r$.
\item $g(\Gamma(G))=3$.
\item $\omega(\Gamma(G))=r$.
\end{enumerate}
\end{proposition}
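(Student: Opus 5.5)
The plan is to encode $\Gamma(G)$ combinatorially and read off all four items. Put $P=\{p_1,\dots,p_r\}$. Each vertex is $U_S=\langle g^{\prod_{p\in S}p}\rangle$ for a unique nonempty proper subset $S\subset P$, so $|G:U_S|=\prod_{p\in S}p$. Since $U_S\cap U_T=U_{S\cup T}$, the product formula gives $|U_SU_T|=|G|$ exactly when $S\cap T=\emptyset$. Hence $U_S\sim U_T$ if and only if $S$ and $T$ are disjoint. Under this dictionary the maximal subgroups are $M_j=U_{\{p_j\}}$, and the degree-one vertices are $A_j=U_{P\setminus\{p_j\}}$, each adjacent only to $M_j$ (Proposition \ref{basic-invariants}(1)).

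For (4), the $M_j$ form a clique of size $r$, as noted before Example \ref{Cpq}. For the upper bound, a clique is a family of pairwise disjoint nonempty subsets of $P$, so it has at most $r$ members. Thus $\omega(\Gamma(G))=r$. For (3), $r\ge 3$ gives the triangle $M_1M_2M_3$, and since the graph is simple, $g(\Gamma(G))=3$.

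For (1), existence is immediate: the clique on $M_1,\dots,M_r$, together with the pendant edges $M_jA_j$, is a copy of $K_r\odot K_1$. Uniqueness is the main point, and I will argue it as follows.
\begin{enumerate}[i)]
\item Any copy of $K_r\odot K_1$ contains a $K_r$. A clique of size $r$ consists of $r$ pairwise disjoint nonempty subsets of a set of size $r$, so they must be exactly the singletons. Hence the $K_r$ is $\{M_1,\dots,M_r\}$.
\item In the corona, each clique vertex $M_j$ has a private neighbour $w_j$ outside the clique, and the $w_j$ are distinct.
\item In (ii) I would read "subgroup isomorphic to" as the induced copy, or equivalently require $w_j$ to have degree one in the copy. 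Under that reading, a neighbour $U_S$ of $M_j$ with $S\ne P\setminus\{p_j\}$ also misses some $p_k$ with $k\neq j$. It is therefore adjacent to $M_k$ as well, giving an extra edge to the clique. This forces $w_j=A_j$.
\end{enumerate}
The main obstacle is this interpretive step. With non-induced subgraphs, uniqueness fails for $r\ge 4$, since $M_1$ could instead take $U_{\{p_2,p_3\}}$ as its pendant. So I would state explicitly that "subgraph" here means induced subgraph, which is consistent with the paper's usage elsewhere.

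For (2), I would apply the same analysis to the cycle $M_1M_2\cdots M_rM_1$ with pendants $A_j$, which is an induced $L_{2r}$ when $r=3$. For general $r$, I expect to interpret the unique sunlet as the one on the vertex set $\{M_j\}\cup\{A_j\}$. The argument is that in any induced sunlet, each pendant vertex has degree one and so must be some $A_j$. Its cycle neighbour must then be the corresponding $M_j$, which pins down the vertex set.

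I would flag one point for checking. For $r\ge4$, the $M_j$ induce $K_r$ rather than $C_r$, so an induced $L_{2r}$ cannot live on this vertex set. The claim must therefore concern the sunlet as a spanning subgraph of $\Gamma(G)[\{M_j,A_j\}]$, with uniqueness up to the choice of Hamiltonian cycle in $K_r$. I would phrase (2) accordingly.
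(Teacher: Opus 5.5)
Your arguments for (3) and (4) are correct. For the uniqueness in (1) your route differs from the paper's and is more complete. The paper only notes that each degree-one vertex $A_j$ sees exactly one maximal subgroup. That shows neither that $\mathcal M$ is the only $r$-clique nor that the pendants of an arbitrary copy are degree-one vertices of $\Gamma(G)$. Your observation that $r$ pairwise disjoint nonempty subsets of an $r$-set must be the singletons fixes the core. The disjointness criterion then forces $w_j=A_j$ as soon as $w_j$ has no neighbour in $\mathcal M$ other than $M_j$. Your example with $M_1$ and the pendant $U_{\{p_2,p_3\}}$ correctly shows that this induced reading is needed for $r\ge 4$. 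That condition is the right one to state. By contrast, ``$w_j$ has degree one in the copy'' holds for every copy, induced or not.

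The gap is in (2). The step ``in any induced sunlet, each pendant vertex has degree one and so must be some $A_j$'' is false. A pendant of an induced sunlet has degree one in the sunlet, not in $\Gamma(G)$. Also, unlike in (1), no clique bound pushes the cycle onto $\mathcal M$.

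\emph{The case $r=5$.} In your notation, take the cycle $U_{\{p_1,p_2\}},U_{\{p_3,p_4\}},U_{\{p_5,p_1\}},U_{\{p_2,p_3\}},U_{\{p_4,p_5\}}$ and attach $U_{P\setminus S}$ to each $U_S$. Cyclically consecutive sets are disjoint, and all other pairs meet. The complements are pairwise intersecting $3$-sets, each disjoint from exactly one cycle set. This gives an induced $L_{10}$ that avoids $\mathcal M\cup\{A_j\}$ entirely, and permuting the primes gives others.

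\emph{The case $r=4$.} Here there is no induced $L_8$ at all. Take an induced $4$-cycle $S_1S_2S_3S_4$ carrying pendants. A pendant at $S_3$ misses $S_3$ but meets $S_1$, and symmetrically at $S_1$; so $S_1$ and $S_3$ meet and are incomparable. The same holds for $S_2$ and $S_4$. Moreover $(S_1\cup S_3)\cap(S_2\cup S_4)=\emptyset$, so these two unions, each of size at least $3$, force $r\ge 6$.

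\emph{Non-induced copies.} These are never unique for $r\ge 4$: each of the $(r-1)!/2$ Hamiltonian cycles of the clique on $\mathcal M$, with the pendants $A_j$, yields a different one.

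So (2) as stated fails for $r=4,5$ under either reading, and for every $r\ge 4$ as a statement about subgraphs. It is false, not merely ambiguous. Your ``therefore'' leading to the rephrasing rests on the false step. What can actually be proved is existence, which is all the paper's own proof establishes. Your rephrased version, a spanning sunlet of $\Gamma(G)[\mathcal M\cup\{A_j\}]$ determined by a Hamiltonian cycle of $K_r$, is a correct form of that existence statement.
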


\begin{proof}
$(1) - (2)$. Since the maximal subgroups have pairwise coprime indices, Lemma \ref{coprime} shows that $\Gamma(G)[M]\cong K_r$. For each $1\leq j\leq r$, consider the subgroup 
\[A_j:=\left\langle g^{\prod_{i\neq j}p_i}\right\rangle.\]
As observed in the proof of Proposition \ref{basic-invariants} (1), we have $\deg(A_j)=1$ and $A_j$ is adjacent only to $M_j$. Hence, the induced subgraph on the vertex set $\mathcal M\cup\{A_1,\dots,A_r\}$ is isomorphic to $K_r\odot K_1$.
\smallskip

To prove uniqueness, note that the vertices of degree $1$ in $\Gamma(G)$ are precisely the subgroups generated by products of $r-1$ primes. Each such vertex is adjacent to exactly one maximal subgroup,
so the clique $K_r$ determined by the maximal subgroups is the only possible candidate for the core of
a $K_r\odot K_1$ subgraph.
\smallskip

Finally, since $K_r$ contains a cycle of length $r$, attaching a pendant vertex to each vertex of that cycle
yields a sunlet graph of order $2r$.

\noindent $(3)$ Since $r\geq 3$, the induced subgraph $\Gamma(G)[\mathcal M]\cong K_r$ contains a triangle, hence $g(\Gamma(G))=3$.
\smallskip

\noindent $(4)$ Since $\mathcal M$ is a clique of size $r$, we have $\omega(\Gamma(G))\geq r$. On the other hand, any clique
must contain at most one subgroup for each prime divisor, so $\omega(\Gamma(G))=r$.
\end{proof}

\begin{proposition}\label{claw-free}
Let $G \cong C_{p_1 \cdots p_r}$ be cyclic of square-free 
order with $r \geq 2$. Then $\Gamma(G)$ is claw-free if and 
only if $r \leq 3$.
\end{proposition}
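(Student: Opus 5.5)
We would first translate adjacency into set language. A subgroup of $G$ is determined by its order, so identify each vertex with the nonempty proper subset $S\subseteq\{p_1,\dots,p_r\}$ of primes dividing its order. For subgroups $H,K$ of a cyclic group, $|HK|=\lcm(|H|,|K|)$. Hence $H\sim K$ if and only if the corresponding subsets $S_H,S_K$ satisfy $S_H\cup S_K=\{p_1,\dots,p_r\}$. A claw is then a vertex $S$ together with three subsets $T_1,T_2,T_3$ that each contain the complement of $S$, while no two of them have union equal to the whole prime set.

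For the direction ``$r\le 3\Rightarrow$ claw-free'' we would use the explicit descriptions in Remark \ref{rge3}. For $r=2$ the graph is $K_2$, which has no vertex of degree $3$. For $r=3$ the graph is the sunlet $L_6$. Its only vertices of degree $3$ are the maximal subgroups, and each one's neighbourhood consists of the other two maximal subgroups, which are adjacent to each other, plus a pendant vertex. Any three neighbours of a vertex therefore contain an edge, so no induced $K_{1,3}$ exists.

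For the converse, assume $r\ge 4$ and exhibit a claw explicitly.
\begin{itemize}
\item The centre is the maximal subgroup $M_1=\langle g^{p_1}\rangle$, whose subset is $\{p_2,\dots,p_r\}$.
\item The three leaves are the subgroups whose subsets are $\{p_1,p_2\}$, $\{p_1,p_3\}$ and $\{p_1,p_4\}$, namely the subgroups of orders $p_1p_2$, $p_1p_3$ and $p_1p_4$.
\end{itemize}
Each leaf contains $p_1$, so its union with $\{p_2,\dots,p_r\}$ is everything, and each leaf is adjacent to $M_1$. Any two leaves have a union of size $3<r$, so the leaves are pairwise non-adjacent. These four vertices are distinct, since the centre has $r-1\ge 3$ primes and the leaves have $2$. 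The induced subgraph on them is therefore $K_{1,3}$. One could alternatively read this off the $K_{3,3}$ from the proof of Theorem \ref{planarity}, taking one $H_i$ and the three $K_j$, but the direct construction is cleaner.

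There is no real obstacle here. The only step needing care is checking the lcm criterion for adjacency, namely $HK=\langle H,K\rangle$ in an abelian group, together with the independence of the leaves. That independence is where $r\ge 4$ is used: it guarantees a prime outside any two leaves' union.
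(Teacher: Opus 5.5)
Your proof is correct and is essentially the paper's argument: the same reduction of adjacency to $S\cup T=\{p_1,\dots,p_r\}$, the same appeal to $K_2$ and $L_6$ for $r\le 3$, and the same claw centred at $M_1$ with leaves of orders $p_1p_2$, $p_1p_3$, $p_1p_4$. Drop your aside about reading the claw off the $K_{3,3}$ in the proof of Theorem~\ref{planarity}, because that configuration is not a $K_{3,3}$: $K_i=\langle g^{p_4p_i}\rangle\le H_i$ gives $H_iK_i=H_i\neq G$, so each $H_i$ is adjacent to only two of the $K_j$ and no claw results; moreover the maximal subgroups $H_i$ are pairwise adjacent.
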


\begin{proof}
Suppose $r\leq 3$. By Remark \ref{rge3}, $\Gamma(G) \cong K_2$ for $r = 2$, and $\Gamma(G) \cong L_6$ for $r = 3$. Hence, $\Gamma(G)$ is claw-free for $r \leq 3$. 
\smallskip 

Conversely, assume $r \geq 4$. For each nonempty proper subset $S \subset \{1, \ldots, r\}$, denote by $H_S$ the unique subgroup of $G$ of order $\prod_{i \in S} p_i$. Note that two distinct vertices $H_S$ and $H_T$ are adjacent in $\Gamma(G)$ if and only if 
\begin{equation}\label{adjacents-4}
    S \cup T = \{1, \ldots, r\}.
\end{equation}
Consider the vertex $M_1 = H_{\{2,3,\ldots,r\}}$ and the three vertices $A:= H_{\{1,2\}}$, $B:= H_{\{1,3\}}$ and $C:= H_{\{1,4\}}$. By the adjacency criterion \eqref{adjacents-4}, each of these is adjacent to $M_1$. On the other hand, for any two of them, say $H_{\{1, i\}}$ and $H_{\{1, j\}}$ with $2 \leq i < j \leq 4$, we have that they are not adjacent. Hence, $\{M_1, A, B, C\}$ induces a $K_{1,3}$ with center $M_1$, and $\Gamma(G)$ is not claw-free.
\end{proof}

\begin{proposition}\label{prop:center-metric}
Let $G\cong C_{p_1\cdots p_r}$ be cyclic of square-free order, and assume $r\geq 3$.
\begin{enumerate}
\item The induced subgraph $\Gamma(G)[\mathcal{M}]\cong K_r$ is the center of $\Gamma(G)$.
\item $\mathrm{rad}(\Gamma(G))=2$ and $\mathrm{diam}(\Gamma(G))=3$.
\item The clique $\Gamma(G)[\mathcal{M}]$ contains the cycle $C_0 := (M_1,M_2,\dots,M_r,M_1)$, which we call the outer cycle of the center.
\end{enumerate}
\end{proposition}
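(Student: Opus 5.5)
We will work throughout with the subset model from the proof of Proposition \ref{claw-free}. Each vertex is $H_S$ for a nonempty proper subset $S\subset\{1,\dots,r\}$, and $H_S\sim H_T$ if and only if $S\cup T=\{1,\dots,r\}$. It is convenient to pass to complements: write $\bar S=\{1,\dots,r\}\setminus S$, which is nonempty and proper. Then adjacency reads $\bar S\cap\bar T=\emptyset$. The maximal subgroup $M_j$ corresponds to $\bar S=\{j\}$, and the pendant vertex $A_j$ corresponds to $\bar S=\{1,\dots,r\}\setminus\{j\}$. Part (3) needs no work: since $r\ge 3$, the vertices $M_1,\dots,M_r$ are pairwise adjacent by Lemma \ref{coprime}, so $(M_1,\dots,M_r,M_1)$ is a cycle inside the clique $\Gamma(G)[\mathcal M]$.

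For the eccentricities we proceed in four steps.

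\textbf{Upper bound for $M_j$.} Take any vertex $X$ with complement set $\bar X$. If $j\notin\bar X$, then $d(M_j,X)\le 1$. If $j\in\bar X$, then $\bar X\neq\{1,\dots,r\}$, so some $i\notin\bar X$ exists. Then $M_j\sim M_i\sim X$, giving $d(M_j,X)\le 2$. Hence $\operatorname{ecc}(M_j)\le 2$.

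\textbf{Lower bound for $M_j$.} The vertex $H$ with $\bar S=\{j\}\cup\{k\}$, for any $k\ne j$, is nonadjacent to $M_j$. (Alternatively, use $A_i$ with $i\ne j$.) So $\operatorname{ecc}(M_j)=2$.

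\textbf{Upper bound for an arbitrary vertex.} For any two vertices $X,Y$, pick $i\notin\bar X$ and $k\notin\bar Y$. Then $X\sim M_i$, $M_i$ equals or is adjacent to $M_k$, and $M_k\sim Y$. This gives $d(X,Y)\le 3$, so $\operatorname{diam}\le 3$.

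\textbf{Non-maximal vertices have eccentricity exactly $3$.} Let $X$ have $|\bar X|\ge 2$. Choose $j\in\bar X$ and compare $X$ with the pendant vertex $A_j$. The only neighbour of $A_j$ is $M_j$, and $M_j\not\sim X$ because $j\in\bar X$. Also $A_j\ne X$, $A_j\not\sim X$, and $X\ne M_j$. Hence there is no path of length $\le 2$ from $X$ to $A_j$, so $d(X,A_j)=3$. In the degenerate case $X=A_j$ itself (when $|\bar X|=r-1$), choose instead a different $j'\in\bar X$, which exists because $r-1\ge2$. The same argument applies, since $A_{j'}$'s only neighbour $M_{j'}$ is not adjacent to $X$.

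Combining these steps, every maximal subgroup has eccentricity $2$ and every other vertex has eccentricity $3$. Hence $\mathrm{rad}=2$ and $\mathrm{diam}=3$, and the center is exactly $\mathcal M$, inducing $K_r$. This proves (1) and (2).

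The only delicate point is the exact-distance argument in the last step. It must rule out both adjacency and a common neighbour, and it must handle the case where $X$ is itself a pendant vertex. The pendant structure from Proposition \ref{basic-invariants}(1), namely that $A_j$'s unique neighbour is $M_j$, is what makes this step clean. No further obstacle is expected.
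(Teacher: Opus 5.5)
Your proof is correct and follows essentially the same route as the paper. The clique $\mathcal M$ serves as a hub, so every vertex of $\mathcal M$ has eccentricity $2$ and all distances are at most $3$, while the pendant vertices $A_j$ certify distance exactly $3$.

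Your witness $d(X,A_j)=3$ for $j\in\bar X$ is actually more careful than the paper's step ``in the other case, $d(A,B)=3$'', which by itself only gives $d(A,B)\le 3$. Note also that your ``degenerate case'' $X=A_j$ can never occur, since $j\in\bar X$ but $j\notin\bar{A_j}$.
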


\begin{proof}
$(1)$ We show that the vertices of minimum eccentricity are precisely the elements of $\mathcal{M}$. 

\noindent (i) Let $A\in \mathcal{M}$ and let $B$ be any other vertex with $A\neq B$. If $B\in \mathcal{M}$, then since $\Gamma(G)[\mathcal M]$ is a complete graph, we have $d(A,B)=1$. Assume now that $B\notin \mathcal{M}$. If $A \sim B$, then $d(A,B)=1$. If $A \not\sim B$ then there exists $C\in \mathcal{M}$ such that $C\sim B$. Therefore,  $d(A,B)=2$. This proves $\operatorname{ecc}(A)=2$.

\noindent (ii) Let $A\notin \mathcal{M}$ and let $B$ be any other vertex with $A\neq B$. If $B\in \mathcal{M}$, similar as above we have 
$d(A,B)=1$ or $d(A,B)=2$. Suppose now $B\notin \mathcal{M}$. Then using Lemma \ref{complemento-cyc} there exists $C, D\in \mathcal{M}$ such that $A\sim C$ and $B\sim D$. Since the maximal subgroups have pairwise coprime indices, Lemma \ref{coprime} implies that $\Gamma(G)[M]\cong K_r$, in particular, $C \sim D$. If $C=D$, then $d(A,B)=2$. In the other case, $d(A, B)=3$. Thus, $\operatorname{ecc}(A)=3$. We can conclude then that $\Gamma(G)[\mathcal M]$ is the center of $\Gamma(G)$, which is isomorphic to $K_r$

\smallskip

\noindent (2) Since vertices in $\mathcal{M}$ have eccentricity $2$, we obtain $\mathrm{rad}(\Gamma(G))=2$.
Moreover, vertices of degree $1$ exist by Proposition \ref{basic-invariants}, and they are at distance
$3$ from each other, so $\mathrm{diam}(\Gamma(G))=3$.

\smallskip

\noindent (3) This is immediate since $\Gamma(G)[M]\cong K_r$ contains a Hamiltonian cycle.
\end{proof}

\begin{lemma}\label{lem:completesun}
If $r\geq 4$, then $\Gamma(G)$ contains a unique complete sun graph on $2r$ vertices.
\end{lemma}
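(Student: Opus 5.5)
We work with the set-theoretic model from the proof of Proposition \ref{claw-free}. Write $H_S$ for the subgroup of order $\prod_{i\in S}p_i$, where $\emptyset\neq S\subsetneq\{1,\dots,r\}$. Then $H_S\sim H_T$ exactly when $S\cup T=\{1,\dots,r\}$. In this notation the maximal subgroups are $M_j=H_{\{1,\dots,r\}\setminus\{j\}}$. These form the set $U:=\mathcal M$, which by Proposition \ref{smalls} induces $K_r$.

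\textbf{Existence.} For the independent side we take the $r$ subgroups indexed by complements of two cyclically consecutive indices. Put
\[W_j:=H_{\{1,\dots,r\}\setminus\{j,j+1\}},\qquad j=1,\dots,r,\]
with indices read mod $r$. Since $r\geq 4$, each $W_j$ is nontrivial, so it is a vertex of $\Gamma(G)$. We then check the three defining conditions.
\begin{enumerate}[(a)]
\item $W=\{W_1,\dots,W_r\}$ is independent. The union of the index sets of $W_i$ and $W_j$ misses an index whenever the pairs $\{i,i+1\}$ and $\{j,j+1\}$ meet. When the pairs are disjoint, the union is everything, so some $W_i\sim W_j$ do occur. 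These edges are not part of the sun, so we must settle whether the sun is to be an induced subgraph or merely a subgraph.
\item $W_j\sim M_i$ holds precisely when $\{1,\dots,r\}\setminus\{i\}$ together with $\{1,\dots,r\}\setminus\{j,j+1\}$ covers everything. This happens iff $i\in\{j,j+1\}$, which is the prescribed sun adjacency after relabelling ($i=j$ or $i\equiv j+1$).
\item $U$ is a clique, so the sun is complete.
\end{enumerate}
We must also verify chordality. A complete sun is chordal because each $w_j$ is simplicial: its two neighbours $u_j,u_{j+1}$ are adjacent. Removing $W$ then leaves $K_r$, so a perfect elimination ordering exists.

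\textbf{Main obstacle.} The issue is reconciling point (a) with the requirement that $W$ be independent. For $r\geq 5$ there are disjoint consecutive pairs, and then $W_i\sim W_j$ in $\Gamma(G)$. I would therefore interpret ``contains'' as subgraph containment: the sun's edge set lies in $\Gamma(G)$, and the sun's independence refers only to its own edges. I would state this interpretation explicitly in the proof.

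\textbf{Uniqueness.} Any vertex adjacent to two vertices of $\mathcal M$ must be a $W$-vertex of some complete sun, and we use this to pin the sun down.

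First we show $U=\mathcal M$. Suppose $H_S,H_T\notin\mathcal M$ are adjacent. Since both have $|S|,|T|\le r-2$, their complements are disjoint and each has size at least $2$. To identify $U$ as the clique core, I would argue as in Proposition \ref{smalls}(4) with a degree count: each $u_i$ needs two $W$-neighbours plus $r-1$ clique neighbours. The cleanest route is to take uniqueness up to the natural choice, namely the $r$-clique core being $\mathcal M$. Note that maximal cliques of size $r$ need not all equal $\mathcal M$; for example, $\{H_{\{1,\dots,r\}\setminus\{i\}}\}$ could be mixed with smaller sets.

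Once $U=\mathcal M$ is fixed, each $w$ is adjacent to exactly two maximal subgroups $M_i,M_{i'}$. By Lemma \ref{complemento-cyc}(2), this forces $w=H_{\{1,\dots,r\}\setminus\{i,i'\}}$. The cyclic adjacency pattern then forces the pairs $\{i,i'\}$ to form a Hamiltonian cycle on $\{1,\dots,r\}$. This yields uniqueness up to the choice of cyclic ordering of $\mathcal M$, i.e.\ up to automorphisms permuting the primes. This is the sense in which I would state ``unique''.
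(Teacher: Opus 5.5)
Your existence step is inverted. Write $R=\{1,\dots,r\}$. In your own model, $H_S\sim M_i$ iff $S\cup(R\setminus\{i\})=R$ iff $i\in S$.

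\begin{itemize}
\item Your $W_j=H_{R\setminus\{j,j+1\}}$ (this is $\langle g^{p_jp_{j+1}}\rangle$) is adjacent to $M_i$ exactly when $i\notin\{j,j+1\}$. It sees $r-2$ maximal subgroups and misses precisely $M_j$ and $M_{j+1}$. So step (b) is false: the union there is $R\setminus(\{i\}\cap\{j,j+1\})$.
\item The correct choice is $W_j=H_{\{j,j+1\}}$, the subgroup of order $p_jp_{j+1}$. This is the paper's $A_j=\langle g^{\prod_{i\neq j,j+1}p_i}\rangle$, whose neighbours in $\mathcal M$ are exactly $M_j$ and $M_{j+1}$.
\item With this choice your ``main obstacle'' disappears for $r\geq 5$. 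Adjacency $H_{\{i,i+1\}}\sim H_{\{j,j+1\}}$ would need $\{i,i+1,j,j+1\}=R$, which is impossible. So $W$ is independent, $\Gamma(G)[\mathcal M\cup W]$ is an induced complete sun, and no weakening of ``contains'' is needed.
\item Only at $r=4$ is your worry genuine; there your $W$ coincides as a set with the paper's. We have $H_{\{1,2\}}\sim H_{\{3,4\}}$ and $H_{\{2,3\}}\sim H_{\{4,1\}}$. In fact no induced complete sun on $8$ vertices exists: with $U=\mathcal M$ (see below), any admissible $W$ is the edge set of a $4$-cycle on $\{1,2,3,4\}$, and such a cycle consists of two complementary pairs. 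The paper's proof overlooks this case.
\end{itemize}

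For uniqueness, your remark that an $r$-clique need not equal $\mathcal M$ is false, and it is exactly the step you needed. Adjacency $S\cup T=R$ says the complements of the members of a clique are pairwise disjoint nonempty subsets of $R$. So $r$ of them must all be singletons, which forces $U=\mathcal M$.

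\begin{itemize}
\item Your next step inverts the complement again. By Lemma \ref{complemento-cyc}(2) with $k=r-2$, a vertex with exactly two neighbours $M_i,M_{i'}$ in $\mathcal M$ is $H_{\{i,i'\}}$, not $H_{R\setminus\{i,i'\}}$.
\item ``Exactly two'' requires the sun to be induced, whereas you adopted the subgraph reading for existence. Under that reading any $H_S$ with $S\supseteq\{j,j+1\}$ could serve as $w_j$, and your uniqueness argument collapses. The two halves of your proof use incompatible interpretations.
\item Once repaired (induced reading, $r\geq 5$), complete suns on $2r$ vertices correspond bijectively to Hamiltonian cycles of $K_r$ on $R$. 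There are $(r-1)!/2$ of them, so literal uniqueness fails.
\item Your conclusion ``unique up to permuting the primes'' is therefore the correct statement, but you need the clique argument above to reach it.
\item The paper's uniqueness step does not give more. The subgroups $H_{\{i,i'\}}$ have degree $3$, not $2$, and $H_{\{1,3\}}$ is adjacent to $M_1$ and $M_3$, which are non-consecutive on $C_0$.
\end{itemize}
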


\begin{proof}
By Proposition \ref{prop:center-metric}, the outer cycle of the center is $C_0$. For each $1\leq j\leq r$, define
\[A_j:=\left\langle g^{\prod_{i\neq j,j+1} p_i}\right\rangle,\] 
where indices are taken modulo $r$. Clearly, $A_j\notin \mathcal{M}$ and $A_j$ is a proper nontrivial subgroup of $G$.

Moreover, by Lemma \ref{complemento-cyc} (2), the vertex $A_j$ is adjacent exactly to those maximal subgroups
$M_i$ such that $p_i$ does not appear in the product $\prod_{i\neq j, j+1}p_i$, namely $M_j$ and $M_{j+1}$.
Hence, each $A_j$ is adjacent precisely to two consecutive vertices of the outer cycle.

Therefore, the vertices $M_1,\dots, M_r,\ A_1,\dots, A_r$ span an induced subgraph in which $M_1,\dots,M_r$ form a clique and each $A_j$ is adjacent exactly to
$M_j$ and $M_{j+1}$. This induced subgraph is a complete sun graph.

Finally, uniqueness follows since the vertices of degree $2$ in $\Gamma(G)$ are precisely the subgroups generated
by products of exactly $r-2$ primes, and each such vertex is adjacent to exactly two consecutive vertices of the
outer cycle. Hence, no other complete sun subgraph can occur.
\end{proof}

\begin{theorem}\label{pancyclic}
Assume $r\geq 4$ and let $\Gamma'(G)$ be the induced subgraph of $\Gamma(G)$ obtained by deleting
the vertices of degree $1$. Then $\Gamma'(G)$ is $2$-connected.
\end{theorem}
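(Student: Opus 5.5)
We identify vertices with subsets, as in the proof of Proposition \ref{claw-free}. Each nonempty proper subset $S\subset\{1,\dots,r\}$ gives the subgroup $H_S$ of order $\prod_{i\in S}p_i$, and $H_S\sim H_T$ if and only if $S\cup T=\{1,\dots,r\}$. Deleting the vertices of degree $1$ removes exactly the subgroups of prime order, that is, the singletons. Hence $\Gamma'(G)$ has vertex set $\{H_S : 2\le |S|\le r-1\}$. The maximal subgroups $M_j=H_{[r]\setminus\{j\}}$ all survive. Since $r\ge 4$, there are more than two vertices, so it suffices to show that $\Gamma'(G)-X$ is connected for every single vertex $X$.

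We use the clique $\mathcal M$ as a hub. By Lemma \ref{complemento-cyc}(2), a non-maximal vertex $H_S$ with $|S|=k$ is adjacent to exactly the $r-k$ maximal subgroups $M_j$ with $j\notin S$. Since $k\le r-2$ for non-maximal vertices of $\Gamma'(G)$, every such vertex has at least two neighbours in $\mathcal M$. Also $\Gamma'(G)[\mathcal M]\cong K_r$ with $r\ge 4$, and removing one vertex leaves $K_{r-1}$, which is still connected.

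The proof then splits into two cases.

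\emph{Case 1: $X$ is non-maximal.} The clique $\mathcal M$ is untouched. Every remaining non-maximal vertex is still adjacent to some $M_j$, so $\Gamma'(G)-X$ is connected.

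\emph{Case 2: $X=M_i$.} The set $\mathcal M\setminus\{M_i\}$ still induces a connected clique. Every non-maximal vertex had at least two neighbours in $\mathcal M$, so it keeps at least one neighbour in $\mathcal M\setminus\{M_i\}$. Hence every vertex is joined to the connected core, and $\Gamma'(G)-M_i$ is connected.

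The only step needing care is confirming that the deleted degree-one vertices are exactly the singletons. By Proposition \ref{basic-invariants}(3), the degree of a $k$-prime subgroup is $2^{r-k}-1$. This equals $1$ precisely when $k=r-1$, that is, for the subgroups $H_S$ with $|S|=1$ (the prime-order subgroups, of index a product of $r-1$ primes). Once this is settled, the bound $r-k\ge 2$ is the crux of the argument, and it is immediate.

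Alternatively, one can use the cycle characterization of $2$-connectedness recalled in the preliminaries. For two vertices $A,B$, choose distinct maximal neighbours $C\sim A$ and $D\sim B$, possible since each vertex has at least two maximal neighbours. Then close a cycle through $\mathcal M$ using a second neighbour of each. This requires more bookkeeping, so the vertex-deletion argument above is the one we follow.
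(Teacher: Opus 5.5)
Your proof is correct, but it certifies $2$-connectivity by a different route than the paper. The paper uses the characterization ``every two vertices lie on a common cycle'' and builds explicit cycles through $\mathcal M$ in three cases (both vertices maximal, exactly one maximal, neither). You instead check the definition directly, showing that no single vertex disconnects $\Gamma'(G)$. Both arguments rest on the same fact: $\mathcal M$ induces $K_r$, and every non-maximal vertex of $\Gamma'(G)$ has at least two neighbours in $\mathcal M$. Yours needs only two short cases and no cycle bookkeeping.

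That bookkeeping is where the paper's write-up is loose. In its Case 2, the vertex $A\in\mathcal M$ may itself be one of the chosen neighbours $C,D$ of $B$. In the overlapping sub-case of Case 3, the path $A,Y,X,B$ is never actually closed back to $A$. One should take a second $\mathcal M$-neighbour $Z\neq X$ of $B$ and use the cycle $AXBZYA$, or $AXBYA$ if $Z=Y$. Once repaired, the cycle approach gives slightly more: an explicit cycle of length at most $6$ through any two vertices. For the statement itself, your argument is the cleaner one.

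One sentence needs fixing. With your convention $|H_S|=\prod_{i\in S}p_i$, you have $H_S\sim M_j$ if and only if $j\in S$. So $H_S$ has exactly $|S|$ neighbours in $\mathcal M$, not the $r-|S|$ subgroups $M_j$ with $j\notin S$. The parameter $k$ in Lemma~\ref{complemento-cyc}(2) counts the primes in the \emph{index}, i.e.\ $k=r-|S|$.

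The inequality that actually matters is $|S|\ge 2$, and this is exactly what deleting the prime-order pendant vertices provides. Indeed, in $\Gamma(G)$ the vertex $M_j$ is a cut vertex separating $H_{\{j\}}$. Since the surviving non-maximal vertices satisfy $2\le |S|\le r-2$, your miscount is also at least $2$, so nothing breaks. Your final paragraph, where $k$ is read as the index count, already states the bound correctly.
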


\begin{proof}
$(1)$ We use the following standard characterization: a connected graph is $2$-connected if and only if every pair of distinct vertices lies on a common cycle. Let $A,B\in V(\Gamma'(G))$ with $A\neq B$. We show that $A$ and $B$ lie on a common cycle. First, note that $\Gamma(G)[\mathcal M]\cong K_r$, hence any two distinct vertices in $\mathcal M$
are adjacent.

\smallskip
\noindent\emph{Case 1:} $A,B\in \mathcal M$. Since $\Gamma(G)[\mathcal M]$ is complete and $r\geq 3$, there exists $C\in\mathcal M\setminus\{A,B\}$ and $ABCA$ is a triangle in $\Gamma'(G)$.

\smallskip
\noindent\emph{Case 2:} $A\in \mathcal M$ and $B\notin \mathcal M$. Since $B\in V(\Gamma'(G))$, it has degree at least $2$ in $\Gamma(G)$, and in particular $B$ has at least two neighbors in $\mathcal M$ (indeed, by Lemma \ref{complemento-cyc} (2) it is joined by an edge with exactly $r-k\geq 2$ vertices of $\mathcal M$ for some $k\leq r-2$). Choose two distinct vertices $C,D\in\mathcal M$ such that $B\sim C$ and $B\sim D$. Because $\mathcal M$ is a clique, we have $C\sim D$, and also $A\sim C$ and $A\sim D$. Hence $ACBDA$ is a cycle in $\Gamma'(G)$ containing $A$ and $B$.

\smallskip
\noindent\emph{Case 3: $A,B\notin \mathcal M$.} As above, choose distinct $C,D\in\mathcal M$ with $A\sim C$ and $A\sim D$, and distinct $E,F\in\mathcal M$ with $B\sim E$ and $B\sim F$. If $\{C,D\}\cap\{E,F\}\neq\emptyset$, take $X\in \{C,D\}\cap\{E,F\}$ and $Y\in\mathcal M$ with $Y\neq X$ and $A\sim Y$ (such a $Y$ exists since $A$ has at least two neighbors in $\mathcal M$).
Then $A, Y, X, B$ form a path. Since $\Gamma(G)[\mathcal M]\cong K_r$ is complete, there exists a path in $\mathcal M$ joining $X$ to $Y$. Hence $A$ and $B$ lie on a common cycle.
If $\{C,D\}\cap\{E,F\}=\emptyset$, then since $\mathcal M$ is complete we have edges $D\sim E$ and $F\sim C$. Therefore, $ADEBFCA$ is a cycle in $\Gamma'(G)$ containing $A$ and $B$.

In all cases, $A$ and $B$ belong to a common cycle. Hence $\Gamma'(G)$ is $2$-connected.
\end{proof}

\begin{proposition}
Let $r \ge 4$ and let $\Gamma'(G)$ be the induced subgraph of $\Gamma(G)$ obtained by deleting all vertices of degree $1$. Then $\Gamma'(G)$ contains a cycle of every length $\ell$ with $3 \le \ell \le r$.
\end{proposition}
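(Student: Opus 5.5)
The argument only needs the clique induced by the maximal subgroups. By Lemma \ref{coprime}, or Proposition \ref{smalls}, the set $\mathcal M=\{M_1,\dots,M_r\}$ of maximal subgroups induces a copy of $K_r$ in $\Gamma(G)$. So the plan is first to check that this clique survives the passage to $\Gamma'(G)$, and then to read off the cycles inside it.

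\emph{Step 1: the clique survives.} Each $M_j$ has degree $2^{r-1}-1$ by Proposition \ref{basic-invariants}(2). For $r\ge 4$ this degree is at least $7>1$, so no $M_j$ is deleted. Moreover, $\Gamma'(G)$ is an induced subgraph of $\Gamma(G)$. Hence $\Gamma'(G)[\mathcal M]=\Gamma(G)[\mathcal M]\cong K_r$.

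\emph{Step 2: cycles of each length.} Fix $\ell$ with $3\le \ell\le r$. In a complete graph any $\ell$ distinct vertices, taken in any cyclic order, form a cycle. Concretely, I take
\[
M_1M_2\cdots M_\ell M_1 .
\]
Consecutive vertices are adjacent, and so is the closing pair $M_\ell,M_1$, because $\mathcal M$ is a clique. All the vertices are distinct. This gives a cycle of length exactly $\ell$ in $\Gamma'(G)$, and since $\ell$ was arbitrary, every length from $3$ to $r$ occurs.

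There is essentially no obstacle here. The only point needing care is Step 1: one must confirm that removing the pendant vertices does not remove any maximal subgroup, and the degree formula settles this. The lengths $\ell>r$ treated elsewhere in the theorem would need the non-maximal vertices. Those could be inserted via Lemma \ref{complemento-cyc}(2), replacing an edge $M_iM_j$ of the cycle by a path $M_iAM_j$, but the present statement does not require this.
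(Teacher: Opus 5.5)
Your proof is correct and follows the paper's argument: the maximal subgroups induce $K_r$, none of them is deleted because $\deg(M_j)=2^{r-1}-1>1$, and $K_r$ contains cycles of every length from $3$ to $r$. The only differences are cosmetic: you bound the degree below by $7$ rather than $3$, and you write the cycle $M_1M_2\cdots M_\ell M_1$ explicitly.
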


\begin{proof}
By Remark \ref{maximal}, the maximal subgroups $M_1, \dots, M_r$ satisfy $M_iM_j = G$ for all $i \neq j$, so $\Gamma(G)[\mathcal{M}] \cong K_r$. Since each $M_i$ has degree $2^{r-1} - 1 \ge 3$ in $\Gamma(G)$ (Proposition \ref{basic-invariants}, no maximal subgroup is a pendant vertex, so $\mathcal{M} \subseteq V(\Gamma'(G))$. As $K_r$ contains cycles of every length from $3$ to $r$, so does the induced subgraph $\Gamma(G)[\mathcal{M}] \subseteq \Gamma'(G)$.
\end{proof}

\begin{remark}\label{rem:4.18-limits}
The stronger property of Hamiltonicity (and, more generally, pancyclicity) of $\Gamma'(G)$ does not extend to all $r \ge 4$. Exact computation shows that $C(\Gamma(G)) = 2^r - r - 2$ holds for $r = 4, 5$, but fails for $r \ge 6$:

\begin{center}
\begin{tabular}{c c c c}
\hline
$r$ & $2^r - r - 2$ & $C(\Gamma(G))$ & Hamiltonian \\
\hline
4 & 10  & 10  & yes \\
5 & 25  & 25  & yes \\
6 & 56  & 52  & no \\
7 & 119 & 108 & no \\
\hline
\end{tabular}
\end{center}

\smallskip

We leave open the determination of a closed-form expression for $C(\Gamma(G))$, and of the full spectrum of realizable cycle lengths in $\Gamma'(G)$, for $r \ge 6$.
\end{remark}

\section{The Graph for the generalized Quaternion group}

We now specialize to the generalized quaternion group $Q_{2^n}$ (of order $2^n$, with $n\geq 3$), which has a well-understood subgroup lattice. It is generated by elements $a$ and $b$ with presentation: 
\begin{equation}
Q_{2^n} = \langle a,b \mid a^{2^{n-1}} =1, b^2 = a^{2^{n-2}}, b^{-1}ab=a^{-1}\rangle. 
\end{equation}

\begin{example}\label{ex:smallQ}
The factorization graphs of $Q_8$ and $Q_{16}$ are displayed in Figures  \ref{FgG}. These small cases illustrate the general structure of $\Gamma(Q_{2^n})$: a central role is played by the subgroup $\langle a\rangle$, while the remaining vertices form two large symmetric families determined by the parity of the exponent in $a^kb$.
\begin{figure}[H]
\begin{center}
\begin{tikzpicture}[mynode/.style={font=\color{#1}\sffamily,circle,inner sep=2pt,minimum size=0.2cm,draw,thick}]  
			\node[mynode=yellow,fill=] (a) at (0,0) {};  
			\node[mynode=yellow,fill=] (b) at  (2,0)  {};   
			 \node[mynode=yellow,fill=] (ab) at (0,2) {}; 
			\node[mynode=yellow,fill=] (a2) at (3,0) {}; 
			 \draw[thick] (a)  -- (b) -- (ab) -- (a)  ;
			  \node [below] at (0,-0.1) {$\gen{a}$};   
			  \node [below] at (2,-0.1) {$\gen{b}$}; 
			 \node [below] at (3,-0.1) {$\gen{a^2}$}; 	 
			  \node [above] at (0,2) {$\gen{ab}$}; 	  
\end{tikzpicture} \hskip1cm
\begin{tikzpicture}[    vtx/.style={circle,fill=black,inner sep=2pt},
    lbl/.style={font=\scriptsize,inner sep=1pt},]

\def\k{3}           
\def\dx{1.6}        
\def\shift{1}     
\def\dy{1.2}        
\def\gap{2.0}       

\node[vtx] (Top) at (0,  \gap) {};
\node[lbl,above left=-1pt]  at (Top) {$\langle a^{2}, b\rangle$};

\node[vtx] (Mid) at (0,   0) {};
\node[lbl,left]             at (Mid) {$\langle a\rangle$};

\node[vtx] (Bot) at (0, -\gap) {};
\node[lbl,below left=-1pt]  at (Bot) {$\langle a^{2},ab\rangle$};

\draw[very thick] (Top) to[out=180,in=180] (Bot);   
\draw[very thick, bend left=15] (Top) -- (Mid) -- (Bot);          

\foreach \j in {1,...,\numexpr\k-2\relax}{
  \coordinate (Up\j) at (\j*\dx,  \dy);
  \node[vtx] (AU\j) at (Up\j) {};
  \draw[very thick, bend left=15] (Top) -- (AU\j) (Mid) -- (AU\j);
}


\coordinate (Upk) at ({\k*\dx-\shift}, \dy); 
\node[vtx] (AUk)  at (Upk) {};

\draw[very thick, bend left=15] (Top) -- (AUk) (Mid) -- (AUk) (Top) -- (AUk);    

\foreach \j in {1,...,\numexpr\k-2\relax}{
  \coordinate (Dw\j) at (\j*\dx, -\dy);
  \node[vtx] (AD\j) at (Dw\j) {};
  \draw[very thick, bend left=15] (Bot) -- (AD\j) (Mid) -- (AD\j);
}


\coordinate (Dwk) at ({\k*\dx-\shift}, -\dy); 
\node[vtx] (ADk)  at (Dwk) {};

\draw[very thick, bend left=15] (Bot) -- (ADk) (Mid) -- (ADk) (Bot) -- (ADk);    

\node [right] at  (1.6,1.2) {$\gen{ab}$}; 	  
\node [right] at  (3.8,1.2)  {$\gen{a^3b}$};
\node [right] at  (1.6,-1.2) {$\gen{b}$};
\node [right] at  (3.8,-1.2)  {$\gen{a^2b}$}; 	   	 
  
	 \node[circle,fill=, inner sep=2pt] (a2) at (3.8,0.5) {};
			  \node [right] at (3.8,0.5)  {$\gen{a^2}$}; 	 
	 \node[circle,fill=,inner sep=2pt] (a4) at (3.8,-0.5) {};	   	    
			  \node [right] at (3.8,-0.5) {$\gen{a^4}$}; 	 
\end{tikzpicture}
\caption{The factorization graph of $Q_{2^3}$ and $Q_{2^4}$}
	\label{FgG}
\end{center}
\end{figure} 
\end{example}

We will refer back to Figure \ref{FgG} as a guiding example throughout this section.

\begin{theorem}\label{girth_Q}
Let $n\geq 3$. Then the girth of $\Gamma(Q_{2^n})$ is 3.
\end{theorem}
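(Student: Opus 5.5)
Since a simple graph has no cycles of length less than $3$, it suffices to exhibit a triangle in $\Gamma(Q_{2^n})$ for every $n\geq 3$. Example \ref{ex:smallQ} suggests the candidates: in $Q_8$ the three cyclic subgroups $\langle a\rangle$, $\langle b\rangle$, $\langle ab\rangle$ form a triangle. In $Q_{2^n}$ with $n\geq 4$ we take $\langle a\rangle$, $\langle a^2,b\rangle$ and $\langle a^2,ab\rangle$. Each of these has index $2$, hence is maximal and normal. These three subgroups are distinct: $\langle a\rangle$ is cyclic and contains no element $a^kb$, while $\langle a^2,b\rangle$ contains only the elements $a^{2k}b$ and $\langle a^2,ab\rangle$ only the elements $a^{2k+1}b$. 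For $n=3$ the same description applies with $\langle a^2,b\rangle=\langle b\rangle$ and $\langle a^2,ab\rangle=\langle ab\rangle$, since $a^2=b^2$. So one uniform choice covers all $n\geq 3$.

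The key step is to check pairwise adjacency. We invoke Remark \ref{maximal}(2): for distinct normal maximal subgroups $M,N$, the product $MN$ is a normal subgroup properly containing $M$, hence $MN=G$. One can also verify this directly by counting. Any two of these subgroups intersect in $\langle a^2\rangle$, which has order $2^{n-2}$, so
\[|MN|=\frac{2^{n-1}\cdot 2^{n-1}}{2^{n-2}}=2^n .\]
This gives all three edges, so the three subgroups form a triangle in $\Gamma(Q_{2^n})$, and therefore $g(\Gamma(Q_{2^n}))=3$.

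The only point needing care is confirming that these three sets are genuinely subgroups of order $2^{n-1}$. Using $b^{-1}ab=a^{-1}$ and $b^2=a^{2^{n-2}}\in\langle a^2\rangle$ (valid since $n\geq 3$), the subgroup $\langle a^2,b\rangle$ equals $\langle a^2\rangle\cup\langle a^2\rangle b$. Likewise $(ab)^2=b^2$, so $\langle a^2,ab\rangle=\langle a^2\rangle\cup\langle a^2\rangle ab$. Each therefore has order $2\cdot 2^{n-2}=2^{n-1}$, and the proof is complete.
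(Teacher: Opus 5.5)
Your proof is correct and follows the paper's argument. Both exhibit the triangle formed by the three index-$2$ (hence normal maximal) subgroups $\langle a\rangle$, $\langle a^2,b\rangle$, $\langle a^2,ab\rangle$, and both use that the product of two distinct normal maximal subgroups is the whole group; your order count $|MN|=2^{n-1}\cdot 2^{n-1}/2^{n-2}$ and your check that these are genuine subgroups of order $2^{n-1}$ (including $n=3$) just make explicit what the paper takes for granted.
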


\begin{proof}
The group $Q_{2^n}$ has exactly three maximal subgroups, namely $\langle a\rangle$, $\langle a^2,b\rangle$ and $\langle a^2,ab\rangle$. These subgroups are normal. Hence, for any two distinct maximal subgroups $M$ and $N$, the product $MN$ is a normal subgroup properly containing $M$. By maximality, it follows that $MN=Q_{2^n}$. Therefore, the three maximal subgroups induce a triangle in $\Gamma(Q_{2^n})$, and so $g(\Gamma(Q_{2^n}))=3$.
\end{proof}

In the following, let us consider $\Gamma'(Q_{2^n})$ as the subgraph obtained from $\Gamma(Q_{2^n})$ by removing the isolated vertices.

\begin{theorem}\label{puntosaislados}
Let $n\geq 3$. Then the isolated vertices of $\Gamma(Q_{2^n})$ are the subgroups $\langle a^{d}\rangle$, where $d\mid 2^{n-1}$ and $d\neq 1$. In particular, the number of isolated vertices is $n-2$.  
\end{theorem}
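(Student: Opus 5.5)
The statement should be read with $d$ running over the divisors $d\neq 1$ of $2^{n-1}$ that give a nontrivial proper subgroup. For $d=2^{n-1}$ we get $\langle a^{d}\rangle=1$, which is not a vertex, so the isolated vertices are $\langle a^{2^i}\rangle$ for $1\le i\le n-2$. That gives exactly $n-2$ of them.

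\emph{Step 1: these subgroups are isolated.} The quickest route is the fact quoted in the proof of Lemma \ref{isolatednilpotent}: for nilpotent $G$ (and $Q_{2^n}$ is a $2$-group), the isolated vertices of $\Gamma(G)$ are exactly the nontrivial subgroups of $\Phi(G)$ \cite[Theorem 2.2]{Das}. By the proof of Theorem \ref{girth_Q}, the maximal subgroups are $\langle a\rangle$, $\langle a^2,b\rangle$ and $\langle a^2,ab\rangle$. Their intersection is $\langle a^2\rangle$, so $\Phi(Q_{2^n})=\langle a^2\rangle$. This subgroup is cyclic of order $2^{n-2}$, and its nontrivial subgroups are precisely $\langle a^{2^i}\rangle$ for $1\le i\le n-2$.

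For a self-contained argument, one can check directly that $H=\langle a^{2^i}\rangle$ with $i\ge1$ is isolated. Suppose $HK=G$. Since $H\le\Phi(G)$, every maximal subgroup containing $K$ also contains $HK=G$, which is impossible unless $K=G$. But $K$ must be proper, so no such $K$ exists.

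\emph{Step 2: no other vertex is isolated.} Every nontrivial proper subgroup not contained in $\langle a^2\rangle$ must have an edge. There are three kinds.
\begin{itemize}
\item The subgroup $\langle a\rangle$ is adjacent to $\langle a^2,b\rangle$, by Theorem \ref{girth_Q}.
\item A subgroup $K$ containing some element $a^kb$ satisfies $\langle a\rangle K\supseteq\langle a, a^kb\rangle=G$. Since $\langle a\rangle$ is normal, $\langle a\rangle K$ is a subgroup, hence equals $G$, so $K\sim\langle a\rangle$. Here $K\neq\langle a\rangle$ because $a^kb\notin\langle a\rangle$.
\item Every subgroup is either contained in $\langle a\rangle$ or contains some $a^kb$. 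The subgroups of $\langle a\rangle$ outside $\langle a^2\rangle$ are only $\langle a\rangle$ itself, since they must contain an odd power of $a$, which generates $\langle a\rangle$.
\end{itemize}
So every vertex outside the list has positive degree.

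The main point needing care is simply identifying $\Phi(Q_{2^n})$ and the lattice of subgroups of $\langle a\rangle$. The count $n-2$ then follows from the cyclic subgroup lattice of $\langle a^2\rangle\cong C_{2^{n-2}}$, and matches the formula of Lemma \ref{isolatednilpotent} read as $|\mathscr{S}(\Phi(G))|-1=(n-1)-1$.
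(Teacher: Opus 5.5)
Your proof is correct, but it follows a different route from the paper's. The paper starts from Conrad's explicit list of subgroups, $\langle a^i\rangle$ with $i\mid 2^{n-1}$ and $\langle a^j,a^kb\rangle$ with $j\mid 2^{n-2}$, and argues by orders. A product of two subgroups of $\langle a\rangle$ stays inside $\langle a\rangle$, while $|\langle a^i\rangle\langle a^j,a^kb\rangle|=2^n/\gcd(i,j)<2^n$.

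You instead identify $\Phi(Q_{2^n})=\langle a^2\rangle$ and use the non-generator property of the Frattini subgroup. Your isolation argument therefore works in any finite group and needs neither the classification nor any order computation. For the converse you only use that $\langle a\rangle$ is normal and that every element is $a^i$ or $a^ib$.

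Your version also fills in what the paper's proof leaves implicit:
\begin{itemize}
\item The inequality $2^n/\gcd(i,j)<2^n$ tacitly requires $i>1$. For $i=1$ it is an equality, which is exactly why $\langle a\rangle$ is not isolated.
\item The paper never shows in this proof that $\langle a\rangle$ and the subgroups $\langle a^j,a^kb\rangle$ have neighbours. That only appears later, in Propositions \ref{prop:girthQ} and \ref{cor:totaldom}.
\item It does not discard $d=2^{n-1}$ (the trivial subgroup) when arriving at the count $n-2$.
\end{itemize}

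What the paper's approach buys in return is the explicit parametrization of the vertex set. The later counting results (Theorem \ref{estar_extension} and Lemma \ref{lem:twohalves}) are built on it.
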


\begin{proof} 
By \cite[Theorem 5.8]{Conrad}, the vertex set of $\Gamma(Q_2^{n})$ is 
\[V = \{\langle a^{i}\rangle,\langle a^{j},a^kb\rangle  : i\mid2^{n-1},j\mid2^{n-2} \mbox{ and } 0\leq k<j\}\]
Let $A = \langle a^{i} \rangle$ with $i\mid 2^{n-1}$. We consider two cases:

\noindent \emph{Case 1.} Let $B=\langle a^l\rangle$ with $l\mid 2^{n-1}$.
If $i=1$, then $A=\langle a\rangle$ and clearly $AB\leq \langle a\rangle\neq Q_{2^n}$.
Assume now $i>1$. Without loss of generality, we may take $l<i$.
Then $AB=B$, so $|AB|<2^n$ and hence $A\not\sim B$.

\noindent \emph{Case 2.} Let $B=\langle a ^j, a^kb\rangle$. Then $A\cap B = \langle a^{\lcm(i,j)}\rangle$ and we have
\[|AB| = \frac{|A||B|}{|A\cap B|} = \frac{2^{n-1}/i \cdot 2^{n}/j}{2^{n-1}/\lcm(i,j)} = \frac{2^{n}}{\gcd(i,j)}< 2^n.\]
This proves that $A$ and $B$ are not connected. 
\end{proof}

\begin{proposition}\label{prop:girthQ}
Let $n\geq 3$. Every vertex of $\Gamma'(Q_{2^n})$ is connected by an edge to some element of $\mathcal{M}= \{\langle a^2, b\rangle, \langle a^2, ab\rangle, \langle a\rangle\}$.
\end{proposition}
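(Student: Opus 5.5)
We use the description of the vertex set from the proof of Theorem \ref{puntosaislados}. By that theorem, the vertices of $\Gamma'(Q_{2^n})$ are $\langle a\rangle$ together with the subgroups $B=\langle a^{j},a^kb\rangle$, where $j\mid 2^{n-2}$, $0\leq k<j$, and $B\neq Q_{2^n}$. The subgroups $\langle a^i\rangle$ with $i>1$ are exactly the isolated vertices and have been removed. So it suffices to exhibit, for each such $B$, a neighbour in $\mathcal M$; the case $\langle a\rangle\in\mathcal M$ is covered by Theorem \ref{girth_Q}.

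\textbf{Key step.} We show that $\langle a\rangle$ is adjacent to every $B=\langle a^{j},a^kb\rangle$. Since $B$ contains $a^kb\notin\langle a\rangle$, we have $\langle a\rangle B\supseteq\langle a\rangle\cup\langle a\rangle a^kb$. These are the two cosets of $\langle a\rangle$, which has index $2$, so $\langle a\rangle B=Q_{2^n}$.

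Alternatively, one can count with the formula used in Case 2 of Theorem \ref{puntosaislados}, taking $i=1$:
\[
|\langle a\rangle B|=\frac{2^{n-1}\cdot 2^{n}/j}{2^{n-1}/j}=2^n .
\]
This uses $\langle a\rangle\cap B=\langle a^j\rangle$, which holds because every element of $B$ lies in $\langle a^j\rangle$ or in $\langle a^j\rangle a^kb$.

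\textbf{Remaining check.} We must make sure $B\neq\langle a\rangle$, so that $B$ and $\langle a\rangle$ are distinct vertices. This is clear since $a^kb\in B\setminus\langle a\rangle$.

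\textbf{Main obstacle.} The only real care needed is the intersection $\langle a\rangle\cap B$, or equivalently the coset structure of $B$. This needs $(a^kb)^2=a^{2^{n-2}}\in\langle a^j\rangle$, which holds because $j\mid 2^{n-2}$. Thus $B=\langle a^j\rangle\cup\langle a^j\rangle a^kb$ and $|B|=2^n/j$, consistent with the count used in Theorem \ref{puntosaislados}.

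Hence every vertex of $\Gamma'(Q_{2^n})$ other than $\langle a\rangle$ is adjacent to $\langle a\rangle\in\mathcal M$, and $\langle a\rangle$ is adjacent to the other two maximal subgroups. This proves the statement.
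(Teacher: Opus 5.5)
Your proof is correct, but it takes a somewhat different route from the paper's.

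\emph{The paper's route.} For a vertex $A$ of $\Gamma'(Q_{2^n})$, the paper picks some $B\in\mathcal M$ with $A\not\le B$. Such a $B$ exists, implicitly, because $A\not\le\langle a^2\rangle=\bigcap\mathcal M$, the nontrivial subgroups of $\langle a^2\rangle$ being exactly the isolated vertices. Since $B$ is normal and maximal, $AB$ is a subgroup properly containing $B$, so $AB=Q_{2^n}$. Here the neighbour depends on $A$, and no explicit list of subgroups is needed.

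\emph{Your route.} You fix the neighbour $\langle a\rangle$ once and for all. You then use the explicit classification $\langle a^j,a^kb\rangle$ to see that every other vertex contains some $a^kb$, and the index-$2$ coset argument gives $\langle a\rangle B=Q_{2^n}$.

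\emph{What each buys.} Your argument proves the stronger fact that $\langle a\rangle$ is adjacent to every other vertex of $\Gamma'(Q_{2^n})$. This is exactly Proposition~\ref{cor:totaldom}(1), and the present proposition becomes a corollary of it. The price is that you depend on the subgroup classification. The paper's argument works verbatim in any finite group all of whose maximal subgroups are normal, for example any finite nilpotent group.

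Your ``main obstacle'' is not actually needed. The coset argument uses only $a^kb\in B$; the intersection $\langle a\rangle\cap B=\langle a^j\rangle$ matters only for the optional counting check.

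In fact the statement holds in any finite group $G$ without any of this. If $A\sim C$ and $M$ is a maximal subgroup containing $C$, then $AM\supseteq AC=G$. Hence $AM=G$, so necessarily $A\neq M$, and therefore $A\sim M$.
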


\begin{proof}
Let $A$ be any vertex of $\Gamma'(Q_{2^n})$. Then there exists $B\in \mathcal{M}$ such that $A\not\leq B$. Since $B$ is a normal maximal subgroup of $Q_{2^n}$, it follows $AB=Q_{2^n}$. 
\end{proof}

\begin{proposition}\label{cor:totaldom}
Let $n\geq 3$. Then 
\begin{enumerate}
    \item $\{\langle a\rangle\}$ is a dominating set of $\Gamma'(Q_{2^n})$. In particular, $\gamma(\Gamma'(Q_{2^n}))=1$. 
    \item The center of $\Gamma'(Q_{2^n})$ is $\langle a\rangle$.
    \item $\mathrm{diam} (\Gamma'(Q_{2^n})) = 2$. 
\end{enumerate}
\end{proposition}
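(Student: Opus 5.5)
The plan is to show that $\langle a\rangle$ is adjacent to every other vertex of $\Gamma'(Q_{2^n})$; all three items then follow.

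By Theorem \ref{puntosaislados} and the description of the vertex set from \cite[Theorem 5.8]{Conrad}, the non-isolated vertices are $\langle a\rangle$ together with the subgroups $B=\langle a^{j},a^kb\rangle$, where $j\mid 2^{n-2}$ and $0\le k<j$. Every such $B$ contains an element $a^kb\notin\langle a\rangle$. We could compute $|\langle a\rangle B| = 2^{n-1}\cdot|B|/|\langle a\rangle\cap B|$ directly. It is quicker to note that $\langle a\rangle$ is normal of index $2$, so $\langle a\rangle B$ is a subgroup. It properly contains $\langle a\rangle$, since $a^kb\in B\setminus\langle a\rangle$, and maximality then gives $\langle a\rangle B=Q_{2^n}$. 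This is exactly the argument of Proposition \ref{prop:girthQ} with the maximal subgroup fixed to be $\langle a\rangle$. Hence $\langle a\rangle$ is adjacent to all other vertices of $\Gamma'(Q_{2^n})$, so $\{\langle a\rangle\}$ is a dominating set and $\gamma=1$. The value cannot be $0$ because the graph is nonempty.

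For (2) and (3), since $\langle a\rangle$ is universal, $\operatorname{ecc}(\langle a\rangle)=1$. Any two vertices are then at distance at most $2$ via $\langle a\rangle$, so $\mathrm{diam}\le 2$.

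For uniqueness of the center, we must show that no other vertex has eccentricity $1$. For this, the natural witness is the pair $\langle b\rangle$ and $\langle ab\rangle$, both with $j=2^{n-2}$ when $n\ge 4$. Each is cyclic of order $4$ and both contain $a^{2^{n-2}}$. Therefore $|\langle b\rangle\langle ab\rangle|=16/2=8<2^n$ for $n\ge4$, and so they are non-adjacent. A vertex $B=\langle a^{j},a^kb\rangle\neq\langle a\rangle$ fails to be universal because it is non-adjacent to some other subgroup of the same type. If $B$ is contained in $\langle a^2,b\rangle$, take $\langle b\rangle$ or $\langle a^2b\rangle$, whichever differs from $B$; both lie in $\langle a^2,b\rangle$, so the product lies in a proper subgroup. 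The case $B\le\langle a^2,ab\rangle$ is symmetric, and every such $B$ lies in one of the two maximal subgroups $\langle a^2,b\rangle$ and $\langle a^2,ab\rangle$. For $n=3$, $\Gamma'(Q_8)$ is a triangle by Figure \ref{FgG}. There every vertex has eccentricity $1$, so the claim of a unique center (and diameter $2$) must be read for $n\ge4$, or handled as a degenerate case; I would flag this.

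The main obstacle is this uniqueness and diameter step, namely exhibiting a non-neighbor of each non-central vertex that is not isolated. The containment-in-a-common-maximal-subgroup trick handles it cleanly once $n\ge4$. The existence of a pair at distance exactly $2$ gives $\mathrm{diam}=2$.
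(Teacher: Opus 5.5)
Your argument is correct and follows essentially the paper's route. First, $\langle a\rangle$ is a universal vertex, because it is normal and maximal and every other non-isolated vertex contains some $a^kb$. Then every other vertex has eccentricity exactly $2$. Your explicit non-neighbors supply the non-adjacency that the paper's case analysis only asserts: a second vertex inside the same subgroup $\langle a^2,b\rangle$ or $\langle a^2,ab\rangle$, and the pair $\langle b\rangle,\langle ab\rangle$ at distance $2$. Your caveat about $n=3$ is also right: $\Gamma'(Q_8)\cong K_3$, so there the center consists of all three vertices and the diameter is $1$, and items (2) and (3) genuinely require $n\ge 4$.
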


\begin{proof}
(1) Let $H\in V(\Gamma'(Q_{2^n}))$. Then $H$ is a non-isolated vertex, hence it is not contained in
$\langle a\rangle$. Since $\langle a\rangle$ is maximal, we obtain $H\langle a\rangle=Q_{2^n}$.
Thus every vertex of $\Gamma'(Q_{2^n})$ is adjacent to $\langle a\rangle$, and therefore
$\{\langle a\rangle\}$ is a dominating set of $\Gamma'(Q_{2^n})$.

$(2) - (3)$ Let $A$ and $B$ be two vertices of $\Gamma'(Q_{2^n})$ distinct from $\langle a\rangle$ with $A\neq B$. 
\begin{enumerate}[i)]
\item From (1) we have $d(\langle a\rangle,A)=1$ for every vertex $A\neq \langle a\rangle$.
Hence $\mathrm{ecc}(\langle a\rangle)=1$.

\item If $A, B\in \mathcal{M}$, then by Proposition \ref{prop:girthQ} we have $d(A,B) = 1$.
On the other hand, if $A\in \mathcal{M}$ and $B\notin \mathcal{M}$, then $d(A,B)=1$ or $d(A,B)=2$. Therefore, $\operatorname{ecc}(A) = 2$. 

\item  If $A, B\notin \mathcal{M}$, then by (1) we have $d(A,B) = 2$.
\end{enumerate}
From the above, it follows that the center of $\Gamma'(Q_{2^n})$ is $\langle a\rangle$ and $\mathrm{diam}(\Gamma'(Q_{2^n})) = 2$.
\end{proof}


\begin{theorem}\label{estar_extension}
Let $n\geq 3$. The graph $\Gamma'(Q_{2^n})$ has a maximal induced star $S$ with central vertex $\langle a\rangle$ and $2(2^{n-2}-1)$ leaves. In particular, $S$ is a spanning tree of $\Gamma'(Q_{2^n})$.
\end{theorem}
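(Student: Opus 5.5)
The plan is to identify the leaves of $S$ as \emph{all} vertices of $\Gamma'(Q_{2^n})$ other than $\langle a\rangle$, and then count them.

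\textbf{Step 1: the vertex set of $\Gamma'(Q_{2^n})$.} By the description of the subgroup lattice quoted in the proof of Theorem \ref{puntosaislados} (from \cite[Theorem 5.8]{Conrad}), every subgroup is either cyclic, $\langle a^i\rangle$ with $i\mid 2^{n-1}$, or of the form $\langle a^j,a^kb\rangle$ with $j\mid 2^{n-2}$ and $0\le k<j$. Theorem \ref{puntosaislados} shows the proper nontrivial subgroups $\langle a^d\rangle$ with $d\ne 1$ are exactly the isolated vertices. Hence $V(\Gamma'(Q_{2^n}))$ consists of:
\begin{itemize}
\item the vertex $\langle a\rangle$;
\item the subgroups $\langle a^j,a^kb\rangle$ with $j\in\{2,4,\dots,2^{n-2}\}$ and $0\le k<j$.
\end{itemize}
The value $j=1$ is excluded because it gives the whole group. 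One should also check that distinct pairs $(j,k)$ in this range give distinct subgroups, which is part of the cited lattice description.

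\textbf{Step 2: counting.} The number of vertices different from $\langle a\rangle$ is
\[\sum_{t=1}^{n-2}2^t=2^{n-1}-2=2(2^{n-2}-1).\]
This matches the stated number of leaves.

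\textbf{Step 3: the star.} By Proposition \ref{cor:totaldom}(1), every one of these vertices is adjacent to $\langle a\rangle$. Taking all edges from $\langle a\rangle$ to them yields a subgraph $S\cong K_{1,2(2^{n-2}-1)}$ containing every vertex of $\Gamma'(Q_{2^n})$. Since $S$ is a star it is a tree, so it is a spanning tree of $\Gamma'(Q_{2^n})$. It is maximal, since no star can have more leaves than the $|V(\Gamma'(Q_{2^n}))|-1$ vertices available.

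\textbf{Main obstacle: the word ``induced''.} The two maximal subgroups $\langle a^2,b\rangle$ and $\langle a^2,ab\rangle$ are both leaves of $S$, yet they are adjacent by Theorem \ref{girth_Q}. So $S$ cannot literally be an induced subgraph of $\Gamma'(Q_{2^n})$. I would state the result for $S$ as a (spanning) star subgraph centred at $\langle a\rangle$. For a genuinely induced star one must drop one of these two maximal subgroups, and further leaves too if other leaf pairs are adjacent. Determining the largest such induced star would need a separate analysis of adjacency among the non-cyclic subgroups, which I would not attempt here.
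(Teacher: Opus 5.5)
Your objection is correct, and the paper's own proof fails at exactly that point. The paper first shows that the set $\mathcal N$ of vertices outside $\mathcal M$ has two properties. These vertices are the $2^{n-1}-4$ subgroups $\langle a^{s},a^kb\rangle$ with $4\le s\mid 2^{n-2}$. Each is a neighbour of $\langle a\rangle$, and no two are adjacent, so $\mathcal N\cup\{\langle a\rangle\}$ genuinely induces $K_{1,2^{n-1}-4}$. It then simply ``adds'' $\langle a^2,b\rangle$ and $\langle a^2,ab\rangle$ as leaves to reach $2(2^{n-2}-1)$.

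That step destroys inducedness. The two subgroups are adjacent to each other. Moreover, by Lemma \ref{lem:twohalves}, $\langle a^2,ab\rangle$ is adjacent to every vertex of $I$ and $\langle a^2,b\rangle$ to every vertex of $J$. In fact ``induced'' and ``spanning'' cannot both hold, since $\Gamma'(Q_{2^n})$ contains the triangle on $\mathcal M$.

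With ``induced'' deleted, your argument is a complete proof: vertex set from the lattice description and Theorem \ref{puntosaislados}, then domination by $\langle a\rangle$ via Proposition \ref{cor:totaldom}. It produces the same spanning star the paper ends with. You rightly omit the independence check, which that weaker claim does not need.

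That check is, however, what answers the question you left open. Independence of $\mathcal N$ follows from an order count. The paper's justification, that both subgroups lie in one of the maximal subgroups, does not cover a pair lying in different ones. If $H=\langle a^{s},a^kb\rangle$ and $K=\langle a^{t},a^{l}b\rangle$ with $4\le s\le t$, then $\langle a^{t}\rangle\le H\cap K$, so
\[|HK|\le \frac{2^{n}}{s}\cdot\frac{2^{n}}{t}\cdot\frac{t}{2^{n-1}}=\frac{2^{n+1}}{s}\le 2^{n-1}.\]

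The leaves of an induced star form an independent set. For $n\ge 4$, an independent set containing $\langle a^2,b\rangle$ (resp.\ $\langle a^2,ab\rangle$) must omit $\langle a\rangle$, the other maximal subgroup, and all of $J$ (resp.\ $I$). Including it therefore trades $2^{n-2}-2\ge 2$ vertices of $\mathcal N$ for one. Hence $\alpha(\Gamma'(Q_{2^n}))=|\mathcal N|=2^{n-1}-4$, matching Proposition \ref{invariants-quaternion}, and the star on $\mathcal N\cup\{\langle a\rangle\}$ is a maximum induced star. For $n=3$, $\Gamma'(Q_8)$ is a triangle and the largest induced star is a single edge.

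So the defensible statement has two parts. There is a spanning (non-induced) star with $2(2^{n-2}-1)$ leaves. When $n\ge 4$, there is also a maximum induced star centered at $\langle a\rangle$ with $2^{n-1}-4$ leaves.
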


\begin{proof}

We first describe the vertices outside the maximal subgroups, then determine their number, and finally show that they form a star centered at $\langle a\rangle$.

Let $\mathcal{N}$ be the set of vertices of $\Gamma'(Q_{2^{n}})$ excluding the set $\mathcal{M}$ of all maximal subgroups. Every subgroup in $\mathcal{N}$ has the form
\[B_{j,k}=\langle a^{2^{\,n-j}},a^k b\rangle,\]
where $2\leq j\leq n-2$ and $0\leq k<2^{n-j}$.

\smallskip

For fixed $j$, the subgroups $B_{j,k}$ are pairwise distinct.
Indeed, if $B_{j,k} = B_{j,k'}$, then $a^{k-k'}\in B_{j,k}\cap \langle a\rangle =\langle a^{2^{n-j}}\rangle$, which implies $2^{n-j}\mid (k-k')$.
Since $0\leq k,k'<2^{n-j}$, we obtain $k=k'$. Hence, for each $j$ there are exactly $2^{n-j}$ distinct subgroups of this form. Therefore,
\[|\mathcal{N}| =\sum_{j=2}^{n-2}2^{n-j} =2^{n-1}-4.\]

\smallskip

We now show that every vertex in $\mathcal{N}$ is adjacent only to $\langle a\rangle$. Let $B_{j,k}\in \mathcal{N}$. Since $B_{j,k}\not\leq \langle a\rangle$ and $\langle a\rangle$ is maximal, we have $\langle a\rangle B_{j,k}=Q_{2^n}$, so $B_{j,k}\sim \langle a\rangle$.

On the other hand, if $B_{j,k},B_{j',k'}\in \mathcal{N}$, then $B_{j,k} B_{j',k'}\neq Q_{2^n}$, since both are contained in one of the maximal subgroups $\langle a^2,b\rangle$ or $\langle a^2,ab\rangle$. Thus, no two vertices of $\mathcal{N}$ are adjacent.

\smallskip

\noindent\textbf{Step 4. The star structure.} The induced subgraph on $\mathcal{N} \cup\{\langle a\rangle\}$ 
is a star centered at $\langle a\rangle$ with $|\mathcal{N}|=2^{n-1}-4$ leaves. Finally, adding the two maximal subgroups $\langle a^2,b\rangle$ and $\langle a^2,ab\rangle$ gives a star with $2(2^{n-2}-1)$ leaves. Since every vertex of $\Gamma'(Q_{2^n})$ belongs to this subgraph, it is a spanning tree. 
\end{proof}

\begin{corollary}\label{cor:orderQ}
Let $n\geq 3$.
\begin{enumerate}
\item The order of $\Gamma'(Q_{2^n})$ is $2^{n-1}-1$.
\item The order of $\Gamma(Q_{2^n})$ is $2^{n-1}+n-3$.
\end{enumerate}
\end{corollary}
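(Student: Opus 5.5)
The plan is to count the vertices of $\Gamma'(Q_{2^n})$ directly and then add back the isolated vertices. By Theorem \ref{puntosaislados}, every vertex of $\Gamma(Q_{2^n})$ is either a cyclic subgroup $\langle a^d\rangle$ of the rotation subgroup or a subgroup of the form $\langle a^j, a^k b\rangle$. The non-isolated vertices are exactly $\langle a\rangle$ together with all subgroups not contained in $\langle a\rangle$.

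For part (1), I will use the description in the proof of Theorem \ref{estar_extension}. There the vertex set of $\Gamma'(Q_{2^n})$ is partitioned as $\mathcal{M}\cup\mathcal{N}$, with $|\mathcal{M}|=3$. The set $\mathcal{N}$ consists of the pairwise distinct subgroups $B_{j,k}=\langle a^{2^{n-j}},a^kb\rangle$, giving
\[|\mathcal{N}|=\sum_{j=2}^{n-2}2^{n-j}=2^{n-1}-4.\]
Adding the three maximal subgroups gives order $2^{n-1}-1$. This matches the star of Theorem \ref{estar_extension}: one centre plus $2(2^{n-2}-1)=2^{n-1}-2$ leaves, and that star is spanning.

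As a sanity check, I will recount independently. The non-cyclic subgroups of the form $\langle a^{2^m}, a^kb\rangle$ with $1\le m\le n-2$ number $2^m$ for each $m$. Also, $\langle a^kb\rangle$ is cyclic of order $4$ and equals $\langle a^{2^{n-2}},a^kb\rangle$, which accounts for $2^{n-2}$ such subgroups. Together these give $\sum_{m=1}^{n-2}2^m=2^{n-1}-2$ subgroups not contained in $\langle a\rangle$, and adding $\langle a\rangle$ gives $2^{n-1}-1$.

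The main obstacle is reconciling the index range $2\le j\le n-2$ in Theorem \ref{estar_extension} with this independent count. In particular I must make sure the cyclic subgroups $\langle a^kb\rangle$ of order $4$ are counted and none is double-counted. I will settle this by using the independent count as the authoritative one, checked against $n=3,4$ from Figure \ref{FgG}, where the orders are $3$ and $7$.

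For part (2), Theorem \ref{puntosaislados} gives exactly $n-2$ isolated vertices. Since $\Gamma'(Q_{2^n})$ is obtained by deleting precisely these, the order of $\Gamma(Q_{2^n})$ is $(2^{n-1}-1)+(n-2)=2^{n-1}+n-3$.
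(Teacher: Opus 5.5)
Your proposal is correct and follows essentially the paper's route: the paper gets $|V(\Gamma'(Q_{2^n}))|=1+\deg(\langle a\rangle)$ from the spanning star of Theorem~\ref{estar_extension}, whose degree $2(2^{n-2}-1)$ is exactly the count $|\mathcal N|+2$ you use, and then adds the $n-2$ isolated vertices of Theorem~\ref{puntosaislados} just as you do. The ``obstacle'' you flag does not arise, because the $j=2$ term $B_{2,k}=\langle a^{2^{n-2}},a^kb\rangle=\langle a^kb\rangle$ is precisely the family of cyclic subgroups of order $4$; with $m=n-j$, the range $2\le j\le n-2$ is your $2\le m\le n-2$, the $m=1$ subgroups are the two maximal ones in $\mathcal M$, and in your recount the $m=n-2$ term is that cyclic family itself, not something added to it.
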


\begin{proof}
By Theorem \ref{estar_extension}, the subgraph $S$ is a star centered at $\langle a\rangle$.
Moreover, $\deg(\langle a\rangle)=2(2^{n-2}-1)$.
Since $S$ is a spanning tree, it contains all vertices of $\Gamma'(Q_{2^n})$, and therefore
\[|V(\Gamma'(Q_{2^n}))| = 1+\deg(\langle a\rangle) = 1+2(2^{n-2}-1) = 2^{n-1}-1.\]
Finally, by Proposition \ref{prop:girthQ}, the isolated vertices of $\Gamma(Q_{2^n})$ are precisely the cyclic subgroups
$\langle a^{2^t}\rangle$ for $2\leq t\leq n-1$, which gives $n-2$ additional vertices. Hence
\[|V(\Gamma(Q_{2^n}))| = |V(\Gamma'(Q_{2^n}))|+(n-2) = 2^{n-1}+n-3.\]
\end{proof}

\begin{lemma}\label{lem:twohalves}
Let $n\geq 4$. Define
\[I=\{\langle a^j,a^kb\rangle \mid j\mid 2^{n-2},\ j>2,\ 0\leq k<j,\ k \text{ even}\},\]
\[J=\{\langle a^j,a^kb\rangle \mid j\mid 2^{n-2},\ j>2,\ 0\leq k<j,\ k \text{ odd}\}.\]
Then $|I|=|J|=2(2^{n-3}-1)$. Moreover, the induced subgraph on $I\cup\{\langle a^2,ab\rangle\}$ is a star centered at
$\langle a^2,ab\rangle$, and the induced subgraph on $J\cup\{\langle a^2,b\rangle\}$ is a star centered at
$\langle a^2,b\rangle$. Each of these stars has $2(2^{n-3}-1)$ leaves.
\end{lemma}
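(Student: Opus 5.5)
The plan is a direct computation: count the subgroups in $I$ and $J$, then use the normality of the two maximal subgroups $\langle a^2,b\rangle$ and $\langle a^2,ab\rangle$ exactly as in Proposition \ref{prop:girthQ} and Theorem \ref{estar_extension}.

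\emph{Counting.} The admissible values of $j$ are $j=2^t$ with $2\le t\le n-2$, which is nonempty because $n\ge 4$. For fixed $j$, the subgroups $\langle a^j,a^kb\rangle$ with $0\le k<j$ are pairwise distinct. The argument is the one in the proof of Theorem \ref{estar_extension}: equality for $k\ne k'$ would force $a^{k-k'}\in\langle a^j\rangle$, hence $j\mid k-k'$, which is impossible. Subgroups with different $j$ are also distinct, since they meet $\langle a\rangle$ in $\langle a^j\rangle$. Among $0\le k<j$ exactly $j/2$ values are even and $j/2$ are odd. Therefore
\[|I|=|J|=\sum_{t=2}^{n-2}2^{t-1}=2^{n-2}-2=2(2^{n-3}-1).\]

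\emph{Containments.} Since $\langle a^2,b\rangle=\langle a^2\rangle\cup\{a^{2m}b\}$ and $\langle a^2,ab\rangle=\langle a^2\rangle\cup\{a^{2m+1}b\}$, the parity of $k$ decides where $a^kb$ lies. For $k$ even, $a^kb\in\langle a^2,b\rangle$; together with $a^j\in\langle a^2\rangle$, this gives $H\le\langle a^2,b\rangle$ for every $H\in I$. Also $a^kb\notin\langle a^2,ab\rangle$, so $H\not\le \langle a^2,ab\rangle$. Symmetrically, every $H\in J$ satisfies $H\le\langle a^2,ab\rangle$ and $H\not\le\langle a^2,b\rangle$.

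\emph{Adjacency.} For $H\in I$, the product $H\langle a^2,ab\rangle$ is a subgroup because $\langle a^2,ab\rangle$ is normal. It properly contains the maximal subgroup $\langle a^2,ab\rangle$, so it equals $Q_{2^n}$, and hence $H\sim\langle a^2,ab\rangle$. Any two distinct $H,H'\in I$ satisfy $HH'\subseteq\langle a^2,b\rangle\ne Q_{2^n}$, so they are not adjacent. Moreover, no element of $I$ equals $\langle a^2,ab\rangle$, because $j>2$ makes each element of $I$ non-maximal. Thus the induced subgraph on $I\cup\{\langle a^2,ab\rangle\}$ is $K_{1,|I|}$. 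The statement for $J$ follows by exchanging the roles of the two maximal subgroups.

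The only delicate point is the distinctness in the counting step; everything else reduces to the coset description of the two maximal subgroups.
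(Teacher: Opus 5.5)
Your proof is correct and follows essentially the same route as the paper: the same coset argument for distinctness for fixed $j$ and the same parity count give $|I|=|J|$, and then each element of $I$ (resp.\ $J$) lies in one index-$2$ subgroup and is adjacent to the other. You are slightly more careful than the paper: you state explicitly that normality of $\langle a^2,ab\rangle$ is what makes $H\langle a^2,ab\rangle$ a subgroup, and you also check distinctness across different values of $j$.
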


\begin{proof}
For each divisor $j\mid 2^{n-2}$ with $j>2$ and each $0\leq k<j$,  we set $H_{j,k} := \langle a^j, a^k b\rangle$.

\smallskip
\noindent\emph{Counting.}
Fix $j$ as above. If $H_{j,k}=H_{j,k'}$, then $a^{k-k'}=(a^k b)(a^{k'}b)^{-1}\in H_{j,k}\cap \langle a\rangle$. Since $H_{j,k}\cap\langle a\rangle=\langle a^j\rangle$, we get $a^{k-k'}\in\langle a^j\rangle$, hence
$j\mid (k-k')$. As $0\leq k,k'<j$, this forces $k=k'$. Therefore, for each fixed $j$ there are exactly $j$
distinct subgroups $H_{j,k}$, and exactly $j/2$ of them have $k$ even (resp.\ odd). Consequently,
\[|I|=|J|=\sum_{\substack{j\mid 2^{n-2}\\ j>2}}\frac{j}{2} =\sum_{t=2}^{n-2}2^{t-1} =2^{n-2}-2=2(2^{n-3}-1).\]

\noindent\emph{The induced subgraph on $I\cup\{\langle a^2,ab\rangle\}$ is a star.}
Let $C_{\mathrm{odd}}:=\langle a^2,ab\rangle$ and $C_{\mathrm{even}}:=\langle a^2,b\rangle$.
If $H_{j,k}\in I$, then $k$ is even, so $a^k b$ has the form $a^{2t}b$. Hence, $H_{j,k}\leq \langle a^2,b\rangle=C_{\mathrm{even}}$, and in particular, for any two distinct vertices $H,H'\in I$ we have $HH'\leq C_{\mathrm{even}}\neq Q_{2^n}$,
so $H\not\sim H'$.

On the other hand, $H_{j,k}\not\leq C_{\mathrm{odd}}$, because every element of $C_{\mathrm{odd}}\setminus\langle a\rangle$
has the form $a^{2t+1}b$ (odd exponent), whereas $H_{j,k}$ contains $a^{2t}b$ (even exponent).
Since $C_{\mathrm{odd}}$ is maximal, $H_{j,k}C_{\mathrm{odd}}=Q_{2^n}$, i.e.\ $H_{j,k}\sim C_{\mathrm{odd}}$.
Thus, in the induced subgraph on $I\cup\{C_{\mathrm{odd}}\}$ the vertex $C_{\mathrm{odd}}$ is adjacent to every
vertex of $I$, and there are no edges among vertices of $I$, so it is a star centered at $C_{\mathrm{odd}}$.

\smallskip
The argument for the star on $J\cup\{\langle a^2,b\rangle\}$ is symmetric. 
Finally, each star has $|I|=|J|=2(2^{n-3}-1)$ leaves.
\end{proof}

\begin{theorem}\label{book2}
Let $n\geq 4$. In $\Gamma(Q_{2^n})$ there are two maximal triangular book subgraphs $B_p$ with $p=2(2^{n-3}-1)$.   
\end{theorem}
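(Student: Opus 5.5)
The plan is to build the two books directly from the stars of Lemma \ref{lem:twohalves}, using the spine edges $\langle a\rangle\sim\langle a^2,ab\rangle$ and $\langle a\rangle\sim\langle a^2,b\rangle$. Set $C_{\mathrm{even}}=\langle a^2,b\rangle$ and $C_{\mathrm{odd}}=\langle a^2,ab\rangle$. The first book $B_p$ will have spine $\{\langle a\rangle, C_{\mathrm{odd}}\}$ and pages $I$. The second will have spine $\{\langle a\rangle, C_{\mathrm{even}}\}$ and pages $J$. Since $|I|=|J|=2(2^{n-3}-1)=p$, the page count is right.

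\textbf{Step 1 (the book structure).} The spine edge $\langle a\rangle\sim C_{\mathrm{odd}}$ holds because any two distinct maximal subgroups are adjacent, as in the proof of Theorem \ref{girth_Q}. For each $H\in I$:
\begin{itemize}
\item Lemma \ref{lem:twohalves} gives $H\sim C_{\mathrm{odd}}$.
\item $H$ contains some $a^kb$, so $H\not\le\langle a\rangle$; maximality and normality of $\langle a\rangle$ then give $H\langle a\rangle=Q_{2^n}$, as in Proposition \ref{cor:totaldom}(1).
\end{itemize}
The same lemma shows there are no edges inside $I$, since all of $I$ lies in $C_{\mathrm{even}}$. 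So the subgraph on $I\cup\{\langle a\rangle,C_{\mathrm{odd}}\}$ is exactly $B_p$, and it is even induced. The second book follows by the symmetric argument.

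\textbf{Step 2 (maximality).} I would show that no further page can be added with the same spine. A new page $W$ would be a vertex other than $\langle a\rangle$ and $C_{\mathrm{odd}}$, adjacent to both, and not in $I$. Adjacency to $C_{\mathrm{odd}}$ forces $W\not\le C_{\mathrm{odd}}$. Adjacency to $\langle a\rangle$ forces $W$ to be non-isolated, so $W\not\le\langle a\rangle$ by Theorem \ref{puntosaislados}. I then split on the form of the non-cyclic subgroups from \cite[Theorem 5.8]{Conrad}. Every proper non-cyclic-in-$\langle a\rangle$ subgroup is $\langle a^j,a^kb\rangle$, with two exceptions:
\begin{itemize}
\item $j=1$ gives the whole group;
\item $j=2$ gives the maximal subgroups $C_{\mathrm{even}}$ and $C_{\mathrm{odd}}$.
\end{itemize}
The condition $W\not\le C_{\mathrm{odd}}$ forces $k$ even. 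Hence $W\in I\cup\{C_{\mathrm{even}}\}$, and the only possible extra page is $C_{\mathrm{even}}$.

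\textbf{Main obstacle.} The third maximal subgroup $C_{\mathrm{even}}$ is adjacent to both spine vertices, so it is a genuine extra common neighbour. This is where the statement's count $p=2(2^{n-3}-1)$ needs care. I would resolve it by reading "maximal" as maximal among books whose pages are non-maximal subgroups. Equivalently, the page set should be the full set of common neighbours that lie outside $\mathcal M$. Under that reading Step 2 shows $I$ is exactly that set. Alternatively, one can argue that adding $C_{\mathrm{even}}$ as a page destroys the book as an induced subgraph. The reason is that $C_{\mathrm{even}}$ is adjacent to the other pages $H\in J$, which belong to the other book, so no page-disjoint pair of books remains.

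I would state this convention explicitly, and check it against $n=4$ in Figure \ref{FgG}. There, $I=\{\langle b\rangle,\langle a^2b\rangle\}$ and $p=2$.
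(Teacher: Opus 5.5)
Your Step 1 is essentially the paper's proof. The paper only exhibits the two books, with spines $\langle a\rangle\sim\langle a^2,ab\rangle$ and $\langle a\rangle\sim\langle a^2,b\rangle$ and pages $I$ and $J$, via Lemma \ref{lem:twohalves} and Theorem \ref{estar_extension}, and it never argues maximality.

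Your Step 2 is correct and more careful than the paper. What it shows, though, is that the statement fails under the literal meaning of ``maximal''. The common neighbourhood of the spine $\{\langle a\rangle, C_{\mathrm{odd}}\}$ is $I\cup\{C_{\mathrm{even}}\}$. Every member of $I$ is a subgroup of $C_{\mathrm{even}}$, so $C_{\mathrm{even}}$ is adjacent to no page in $I$. Hence the subgraph induced on $I\cup\{\langle a\rangle, C_{\mathrm{odd}}, C_{\mathrm{even}}\}$ is an induced $B_{p+1}$ that properly contains your $B_p$. Your own check against Figure \ref{FgG} confirms this: for $n=4$ the spine $\langle a\rangle$--$\langle a^2,ab\rangle$ has three common neighbours, $\langle b\rangle$, $\langle a^2b\rangle$ and $\langle a^2,b\rangle$. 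So both the inclusion-maximal and the largest books have $2^{n-2}-1=p+1$ pages.

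Your ``alternative'' justification is therefore wrong. Adding $C_{\mathrm{even}}$ as a page does not destroy inducedness, because its neighbours in $J$ are not vertices of that subgraph. The true fact behind your remark is different: $C_{\mathrm{even}}$ is a spine vertex of the \emph{second} book, so enlarging the first book by it makes the two books share more than $\langle a\rangle$.

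You should not present this as a proof of the statement as written. Say explicitly that the count $p$ holds only under a convention. Your first convention works, and Step 2 proves it: the pages are the common neighbours of the spine outside $\mathcal M$.

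A convention closer to how the paper later uses the books in Theorem \ref{planarQ} is maximality among pairs of books meeting only in $\langle a\rangle$. Under it $p$ is optimal, by the following argument.
\begin{itemize}
\item Since $I\cup J$ is independent, and a vertex of $I$ (resp.\ $J$) has only the neighbours $\langle a\rangle, C_{\mathrm{odd}}$ (resp.\ $\langle a\rangle, C_{\mathrm{even}}$), every triangle of $\Gamma'(Q_{2^n})$ contains $\langle a\rangle$ and at least one of $C_{\mathrm{odd}}, C_{\mathrm{even}}$.
\item Each book contains a triangle, and two books meeting only in $\langle a\rangle$ share neither $C$. So each book contains exactly one of $C_{\mathrm{odd}}, C_{\mathrm{even}}$.
\item The book containing $C_{\mathrm{odd}}$ but not $C_{\mathrm{even}}$ has all its triangles of the form $\{\langle a\rangle, C_{\mathrm{odd}}, H\}$ with $H\in I$. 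Hence it has at most $|I|=p$ pages, and symmetrically for the other book.
\end{itemize}
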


\begin{proof}
Let $\mathcal{N}$ be the set of vertices of $\Gamma'(Q_{2^{n}})$ excluding the set $\mathcal{M}$ of all maximal subgroups of $Q_{2^{n}}$. Then
\[\mathcal{N} = \{\langle a^{j},a^kb\rangle\mid j\mid2^{n-2},\ j>2 \mbox{ and } 0\leq k<j\} = \mathcal{I}\cup \mathcal{J},\]
where $\mathcal{I} = \{\langle a^{j},a^kb\rangle\mid  0\leq k<j \mbox{ and } k  \mbox{ even}\}$, and $\mathcal{J} = \{\langle a^{j},a^kb\rangle\mid  0\leq k<j \mbox{ and } k  \mbox{ odd}\}$. 
Thus, by Theorems \ref{estar_extension} and \ref{lem:twohalves}, every vertex in $I$ yields a triangle
$\langle a\rangle \sim \langle a^2,ab\rangle \sim \langle a^j,a^kb\rangle \sim \langle a\rangle$,
and similarly, every vertex in $J$ yields a triangle based on the edge
$\langle a\rangle \sim \langle a^2,b\rangle$.
Hence, in $\Gamma'(Q_{2^n})$ there are two triangular book subgraphs, each with base edges $\langle a^2, ab\rangle\sim \langle a\rangle$  and $\langle a^2,b\rangle\sim \langle a\rangle$. Moreover, since $|\mathcal{I}| = |\mathcal{J}|=2(2^{n-3}-1)$, the number of triangles in each book is $p = 2(2^{n-3}-1)$.
\end{proof}

\begin{proposition}\label{invariants-quaternion}
Let $n\geq 4$. Then the reduced factorization graph $\Gamma'(Q_{2^n})$ satisfies:
\begin{enumerate}
\item $\omega(\Gamma'(Q_{2^n}))=3$.
\item $\alpha(\Gamma'(Q_{2^n})) = 4(2^{n-3}-1)$.
\item $|E(\Gamma'(Q_{2^n}))| = 2(2^{n-1}-3)$.
\item $\Gamma'(Q_{2^n})$ is Eulerian.
\item $\Gamma'(Q_{2^n})$ is not Hamiltonian.
\end{enumerate}
\end{proposition}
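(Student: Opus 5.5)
The plan is to read all five items off the explicit description of $\Gamma'(Q_{2^n})$ already obtained. By Theorem~\ref{estar_extension} and Lemma~\ref{lem:twohalves}, the vertex set is $\{\langle a\rangle, C_{\mathrm{even}}, C_{\mathrm{odd}}\}\cup I\cup J$, where $C_{\mathrm{even}}=\langle a^2,b\rangle$ and $C_{\mathrm{odd}}=\langle a^2,ab\rangle$. Each vertex of $I$ is adjacent exactly to $\langle a\rangle$ and $C_{\mathrm{odd}}$, and each vertex of $J$ exactly to $\langle a\rangle$ and $C_{\mathrm{even}}$. I will first record this "exactly" carefully: a vertex of $I$ lies in $C_{\mathrm{even}}$, so it cannot be adjacent to $C_{\mathrm{even}}$, nor to any vertex of $I\cup J$ (both lie in a common maximal subgroup, or, for $I$ versus $J$, both orders are at most $2^{n-2}$, so $|HK|\le 2^{n-2}\cdot 2^{n-2}/|H\cap K|$ with $H\cap K\supseteq\langle a^{2^{n-2}}\rangle$, hence $|HK|<2^n$). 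Hence every vertex of $I\cup J$ has degree $2$.

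For (1), the three maximal subgroups form a triangle by Theorem~\ref{girth_Q}, so $\omega\ge 3$. A $4$-clique cannot contain a vertex of degree $2$, and there are only three other vertices, so $\omega=3$. For (2), $I\cup J$ is independent of size $2^{n-1}-4=4(2^{n-3}-1)$. An independent set containing any vertex of $\mathcal M$ must omit the other two maximal subgroups and all vertices of $I\cup J$ adjacent to it, so it is no larger. This gives $\alpha=4(2^{n-3}-1)$.

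For (3), I will count edges through the star decomposition. The star at $\langle a\rangle$ (Theorem~\ref{estar_extension}) contributes $2(2^{n-2}-1)$ edges, and the two stars of Lemma~\ref{lem:twohalves} contribute $2\cdot 2(2^{n-3}-1)$. Summing gives $2^{n-1}-2+2^{n-1}-4=2(2^{n-1}-3)$. The main obstacle is checking that this decomposition is exhaustive, in particular how the edge $C_{\mathrm{even}}\sim C_{\mathrm{odd}}$ of Theorem~\ref{girth_Q} is accounted for. I will cross-check the total with the handshake lemma, which yields $2|E|=\deg\langle a\rangle+\deg C_{\mathrm{even}}+\deg C_{\mathrm{odd}}+2|I\cup J|$. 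Any discrepancy between the two counts must be resolved at exactly this edge, and I expect this reconciliation to be the delicate point.

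For (4), the graph is connected since $\langle a\rangle$ dominates it (Proposition~\ref{cor:totaldom}). The degrees are:
\begin{itemize}
\item $\deg\langle a\rangle=2^{n-1}-2$;
\item $\deg C_{\mathrm{odd}}=\deg C_{\mathrm{even}}=2+2(2^{n-3}-1)=2^{n-2}$;
\item $2$ for every vertex of $I\cup J$.
\end{itemize}
All degrees are even for $n\ge4$, so the graph is Eulerian by Euler's theorem. For (5), since $n\ge 4$ we have $|I|=2(2^{n-3}-1)\ge 2$; take $x\ne y$ in $I$. A Hamiltonian cycle must use both edges at each degree-$2$ vertex, so it contains the $4$-cycle $\langle a\rangle\, x\, C_{\mathrm{odd}}\, y\,\langle a\rangle$. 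This closed cycle has fewer vertices than the graph, which has $2^{n-1}-1\ge 7$ vertices, a contradiction.
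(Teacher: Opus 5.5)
The real problem is item (3), and the ``reconciliation'' you postpone cannot succeed, because (3) is false as stated. Your three stars account for the $2^{n-1}-2$ edges at $\langle a\rangle$, the edges $C_{\mathrm{odd}}$--$I$, and the edges $C_{\mathrm{even}}$--$J$. They miss the edge $C_{\mathrm{even}}C_{\mathrm{odd}}$, which exists by Theorem~\ref{girth_Q}. Your own degree list gives
\[2|E|=(2^{n-1}-2)+2\cdot 2^{n-2}+2(2^{n-1}-4)=2^{n+1}-10.\]
Hence $|E(\Gamma'(Q_{2^n}))|=2^n-5=2(2^{n-1}-3)+1$: three edges in the central triangle plus two for each of the $2^{n-1}-4$ vertices of $I\cup J$. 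For $n=4$, Figure~\ref{FgG} shows $11$ edges, not $10$. So no argument can establish (3); the correct value is $2^n-5$, which is also what the degree sequence in Corollary~\ref{sequenceGQ} gives. The paper's own handshake computation for (3) is wrong. It gives the two index-$2$ subgroups the odd degree $2(2^{n-3}-1)+1$, which would contradict (4), and it treats the vertices of $I\cup J$ as having degree $1$. Your degrees $2^{n-1}-2$, $2^{n-2}$, $2^{n-2}$, $2,\dots,2$ are the right ones. Your arguments for (1), (2), (4) and (5) are sound given that every vertex of $I\cup J$ has degree exactly $2$, and they are more reliable than the paper's. In particular, your non-Hamiltonicity argument via two degree-$2$ vertices with the same neighbourhood is correct, whereas the paper's appeal to degree-$1$ vertices is not.

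Second, your justification that no vertex of $I$ is adjacent to a vertex of $J$ fails for $n\ge 5$. There the bound $2^{n-2}\cdot 2^{n-2}/2=2^{2n-5}$ is at least $2^n$, so it does not give $|HK|<2^n$. Since (1), (3), (4) and (5) all rest on the degree-$2$ claim, you need a sharper intersection. Take $H=\langle a^{j},a^{k}b\rangle$ and $K=\langle a^{j'},a^{k'}b\rangle$ with $4\le j\le j'$. Then $K\cap\langle a\rangle=\langle a^{j'}\rangle\le\langle a^{j}\rangle\le H$, so
\[|HK|=|H|\,|K:H\cap K|\le |H|\,|K:K\cap\langle a\rangle|=2|H|\le 2^{n-1}<2^n.\]
The paper does not supply a valid argument here either. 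Its reason in Theorem~\ref{estar_extension}, that both subgroups lie in a common maximal subgroup, is false when one is in $I$ and the other in $J$.
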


\begin{proof}
(1) The three maximal subgroups $\langle a\rangle$, $\langle a^2,b\rangle$ and $\langle a^2,ab\rangle$
form a triangle in $\Gamma'(Q_{2^n})$, so $\omega(\Gamma'(Q_{2^n}))\geq 3$.
On the other hand, $Q_{2^n}$ has exactly three maximal subgroups, hence no clique of size $4$ can occur.
Therefore $\omega(\Gamma'(Q_{2^n}))=3$.

\smallskip
(2) By Lemma \ref{lem:twohalves}, the sets $I$ and $J$ are independent in $\Gamma'(Q_{2^n})$ and satisfy
$|I|=|J|=2(2^{n-3}-1)$. Hence $I\cup J$ is an independent set of size $4(2^{n-3}-1)$, and therefore
\[\alpha(\Gamma'(Q_{2^n}))\geq 4(2^{n-3}-1).\]
Conversely, every vertex of $\Gamma'(Q_{2^n})$ outside $\{\langle a\rangle,\langle a^2,b\rangle,\langle a^2,ab\rangle\}$
lies in $I\cup J$. Since $\langle a\rangle$ is adjacent to every vertex of $\Gamma'(Q_{2^n})$,
no independent set can contain $\langle a\rangle$ together with a vertex of $I\cup J$.
Moreover, any independent set can contain at most one of the vertices $\langle a^2,b\rangle$ and $\langle a^2,ab\rangle$,
since they are adjacent.
Thus, the maximum possible size is attained by $I\cup J$, and
\[\alpha(\Gamma'(Q_{2^n})) = 4(2^{n-3}-1).\]

\smallskip
(3) By Theorem \ref{estar_extension}, the vertex $\langle a\rangle$ has degree $2(2^{n-2}-1)$ in $\Gamma'(Q_{2^n})$.
Moreover, by Lemma \ref{lem:twohalves} the vertices in $I\cup J$ have degree $1$ inside the stars induced by
$I\cup\{\langle a^2,ab\rangle\}$ and $J\cup\{\langle a^2,b\rangle\}$.
Finally, $\langle a^2,b\rangle$ and $\langle a^2,ab\rangle$ are adjacent to $\langle a\rangle$ and to all vertices
in their corresponding half, so each of them has degree $2(2^{n-3}-1)+1$.

Applying the Handshaking Theorem gives
\[2|E(\Gamma'(Q_{2^n}))| = 2(2^{n-2}-1) +2\bigl(2(2^{n-3}-1)+1\bigr) +2\cdot 2(2^{n-3}-1), \]
which simplifies to $|E(\Gamma'(Q_{2^n}))| = 2(2^{n-1}-3)$.
\smallskip

(4) By the explicit degree computation above, every vertex of $\Gamma'(Q_{2^n})$ has even degree.
Hence $\Gamma'(Q_{2^n})$ is Eulerian.

\smallskip
(5) Since $\Gamma'(Q_{2^n})$ contains vertices of degree $1$ (namely those in $I\cup J$), it cannot be Hamiltonian.
\end{proof}

\begin{corollary} \label{sequenceGQ}   
 The degree sequence of $\Gamma(Q_{2^{n}})$ for $n\ge4$  is given by
\begin{equation}
  \bigl(      2^{\,n-1}-2,\;
      2^{\,n-2},\;2^{\,n-2},\;
      \underbrace{2,\dots,2}_{\;2^{\,n-1}-4},\;
      \underbrace{0,\dots,0}_{\,n-2}  \bigr)
\end{equation}
\end{corollary}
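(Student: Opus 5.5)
The plan is to compute the degree of every vertex of $\Gamma(Q_{2^n})$ directly, using the partition of the vertex set already obtained:
\[
V(\Gamma(Q_{2^n}))=\mathcal{M}\,\cup\, I\,\cup\, J\,\cup\,\{\langle a^{d}\rangle : d\mid 2^{n-1},\ d\neq 1,2\}.
\]
By Theorem \ref{puntosaislados}, the last set consists exactly of the isolated vertices; note that $\langle a^2\rangle$ is the one with $d=2$. This accounts for the $n-2$ zeros. Lemma \ref{lem:twohalves} gives $|I|=|J|=2^{n-2}-2$, so $|I\cup J|=2^{n-1}-4$. It then remains to find the degree of $\langle a\rangle$, of the two non-cyclic maximal subgroups, and of the vertices in $I\cup J$. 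I will count each degree directly in $\Gamma(Q_{2^n})$, not through the stars of Lemma \ref{lem:twohalves}, since those stars only see some of the neighbours.

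\emph{Step 1: the vertex $\langle a\rangle$.} By Proposition \ref{cor:totaldom}(1), $\langle a\rangle$ is adjacent to every other vertex of $\Gamma'(Q_{2^n})$. By Corollary \ref{cor:orderQ}, $\Gamma'(Q_{2^n})$ has $2^{n-1}-1$ vertices, so $\deg\langle a\rangle=2^{n-1}-2$.

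\emph{Step 2: vertices $H=\langle a^j,a^kb\rangle\in I\cup J$, with $j\geq 4$.} Such an $H$ is adjacent to $\langle a\rangle$. It is also adjacent to exactly one of $\langle a^2,b\rangle$ and $\langle a^2,ab\rangle$, namely the one not containing it (Proposition \ref{prop:girthQ} and the parity argument of Lemma \ref{lem:twohalves}). It is not adjacent to the other one, since it is contained in it.

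The key point, and the main obstacle, is that $H$ has no neighbour among the other non-maximal vertices. When $H$ and $K$ have the same parity, both lie in one maximal subgroup, so $HK\neq Q_{2^n}$. For opposite parities, I will use the order formula. Let $H\cap\langle a\rangle=\langle a^j\rangle$ and $K\cap\langle a\rangle=\langle a^{j'}\rangle$ with $j,j'\geq 4$ powers of $2$. Then $H\cap K\supseteq\langle a^{\max(j,j')}\rangle$. Since $|H|=2^n/j$ and $|K|=2^n/j'$, this gives
\[
|HK|\leq \frac{2^{n}}{j}\cdot\frac{2^{n}}{j'}\cdot\frac{\max(j,j')}{2^{n-1}}=\frac{2^{n+1}}{\min(j,j')}\leq 2^{n-1}<2^n .
\]
Isolated vertices contribute nothing. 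Hence every vertex of $I\cup J$ has degree exactly $2$.

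\emph{Step 3: the maximal subgroups $\langle a^2,b\rangle$ and $\langle a^2,ab\rangle$.} Take $\langle a^2,b\rangle$. Its neighbours are $\langle a\rangle$ and $\langle a^2,ab\rangle$ (Theorem \ref{girth_Q}), together with exactly the vertices of $J$, which are those not contained in it. Vertices of $I$ are contained in it, and isolated vertices are excluded. So its degree is $2+(2^{n-2}-2)=2^{n-2}$, and symmetrically for $\langle a^2,ab\rangle$.

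Collecting Steps 1--3 with the count of isolated vertices yields the stated sequence. As a check, the degrees sum to $(2^{n-1}-2)+2\cdot 2^{n-2}+2(2^{n-1}-4)=4(2^{n-1}-3)$, which is even as the handshake lemma requires. This corrects the odd value $2(2^{n-3}-1)+1$ used for the two maximal subgroups in Proposition \ref{invariants-quaternion}(3): that count omitted the edge to $\langle a\rangle$ or to the other maximal subgroup. It also shows that the edge count there should be $2(2^{n-1}-3)$, which is consistent with this sum.
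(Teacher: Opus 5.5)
Your Steps 1--3 are correct and give exactly the displayed sequence, but your route differs from the paper's. The paper's proof only cites Theorems \ref{puntosaislados}, \ref{estar_extension} and Proposition \ref{invariants-quaternion}, and those results do not actually supply the values $2^{n-2}$ and $2$. Theorem \ref{estar_extension} asserts that each vertex of $\mathcal N$ is adjacent only to $\langle a\rangle$. Proposition \ref{invariants-quaternion}(3) gives the two non-cyclic maximal subgroups degree $2(2^{n-3}-1)+1=2^{n-2}-1$ and treats the vertices of $I\cup J$ as pendant. Your direct count is what actually establishes the corollary:
each $H\in I\cup J$ lies in exactly one of $\langle a^2,b\rangle$, $\langle a^2,ab\rangle$;
the bound $|HK|\le 2^{n+1}/\min(j,j')\le 2^{n-1}$ makes $I\cup J$ independent;
and the neighbourhood of $\langle a^2,b\rangle$ includes $\langle a^2,ab\rangle$.
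The bound works for either parity, so the same-parity case needs no separate treatment. Your version is self-contained, whereas the paper's citation chain is internally inconsistent.

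There are two slips to fix. First, the isolated set in your partition should be $\{\langle a^{d}\rangle : d\mid 2^{n-2},\ d\neq 1\}$, i.e.\ $d\in\{2,4,\dots,2^{n-2}\}$. As written (``$d\mid 2^{n-1}$, $d\neq 1,2$'') it omits $\langle a^2\rangle$ and, for $d=2^{n-1}$, picks up the trivial subgroup, so it contains only $n-3$ vertices. Second, your handshake check is miscomputed:
\[
(2^{n-1}-2)+2\cdot 2^{n-2}+2(2^{n-1}-4)=2^{n+1}-10,
\]
not $4(2^{n-1}-3)=2^{n+1}-12$. Hence $|E(\Gamma'(Q_{2^n}))|=2^{n}-5$. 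For $Q_{16}$ this gives $11$ edges: the triangle on $\mathcal M$, four edges from $\langle a\rangle$ to the cyclic subgroups of order $4$, and two from each non-cyclic maximal subgroup, consistent with Figure \ref{FgG}. So the edge count $2(2^{n-1}-3)$ in Proposition \ref{invariants-quaternion}(3) is also wrong, and you should drop your closing claim that it is consistent. Neither slip affects the degree sequence itself.
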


\begin{proof}
It follows immediately from Theorems  \ref{puntosaislados}, \ref{estar_extension}, and Proposition \ref{invariants-quaternion}.
\end{proof}

\begin{remark}
The case $n=3$ does not fall under Corollary \ref{sequenceGQ}, but the degree sequence of $\Gamma(Q_8)$
can be computed directly from Figure 2.
\end{remark}

\begin{corollary}\label{tamañodeQ}
Let $n\geq 3$. The number of triangles in $\Gamma'(Q_{2^n})$ is $2^{n-1}-3$. 
\end{corollary}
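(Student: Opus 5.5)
The plan is to count triangles in $\Gamma'(Q_{2^n})$ directly from the adjacency structure obtained in Theorem \ref{estar_extension} and Lemma \ref{lem:twohalves}. We classify every triangle according to how many of its vertices lie in the set $\mathcal{M}=\{\langle a\rangle,\langle a^2,b\rangle,\langle a^2,ab\rangle\}$ of maximal subgroups. The complementary set $\mathcal{N}=I\cup J$ is independent, since any two of its vertices lie in a common maximal subgroup. Hence a triangle contains at most one vertex of $\mathcal{N}$, and therefore at least two vertices of $\mathcal{M}$.

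\emph{Case 1: all three vertices lie in $\mathcal{M}$.} There is exactly one such triangle, namely $\mathcal{M}$ itself (Theorem \ref{girth_Q}).

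\emph{Case 2: exactly one vertex $H$ lies in $\mathcal{N}$.} Then $H$ must be adjacent to two distinct maximal subgroups. By Lemma \ref{lem:twohalves}, we determine the neighbours of $H$.
\begin{enumerate}[i)]
\item If $H\in I$, then $H\le\langle a^2,b\rangle$, so its only neighbours are $\langle a\rangle$ and $\langle a^2,ab\rangle$.
\item Symmetrically, if $H\in J$, its only neighbours are $\langle a\rangle$ and $\langle a^2,b\rangle$.
\end{enumerate}
So $H$ has degree exactly $2$, in agreement with Corollary \ref{sequenceGQ}, and lies in exactly one triangle: the page of the corresponding book $B_p$ from Theorem \ref{book2}. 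This yields $|\mathcal{N}|=|I|+|J|=2^{n-1}-4$ triangles. For this count I will use $|\mathcal{N}|=2^{n-1}-4$ from the proof of Theorem \ref{estar_extension}, which is consistent with $|I|=|J|=2(2^{n-3}-1)$.

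Adding the two cases gives $1+(2^{n-1}-4)=2^{n-1}-3$ triangles. Alternatively, the same count follows from the degree sequence: $\langle a\rangle$ is adjacent to every other vertex of $\Gamma'(Q_{2^n})$, so the triangles through $\langle a\rangle$ correspond bijectively to the edges of $\Gamma'(Q_{2^n})-\langle a\rangle$. Every triangle must contain $\langle a\rangle$, because all vertices of $\mathcal{N}$ are adjacent to it. Counting edges: $|E(\Gamma')| - \deg\langle a\rangle = 2(2^{n-1}-3)-(2^{n-1}-2)=2^{n-1}-4$. This falls short of $2^{n-1}-3$ by one, so I will double-check the edge count in Proposition \ref{invariants-quaternion}(3), since the direct enumeration gives edges among non-$\langle a\rangle$ vertices as $1+(2^{n-1}-4)=2^{n-1}-3$. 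I will rely on the direct case count rather than that formula.

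The small case $n=3$ must be checked separately, because $\mathcal{N}=\emptyset$ and Lemma \ref{lem:twohalves} requires $n\ge4$. Here Figure \ref{FgG} shows $\Gamma'(Q_8)$ is a single triangle, and $2^{2}-3=1$, as required. The main obstacle is being careful that each $H\in\mathcal{N}$ has no neighbour besides the two named maximal subgroups. This rests on $H$ lying in one of $\langle a^2,b\rangle$ or $\langle a^2,ab\rangle$ while not lying in the other, which I would verify via the parity of the exponent $k$ in $a^kb$.
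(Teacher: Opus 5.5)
Your route is the paper's route: the triangle on $\mathcal M$ (Theorem \ref{girth_Q}), one page of a book $B_p$ for each vertex of $I\cup J$ (Theorem \ref{book2}), and independence of $I\cup J$ to exclude all other triangles. Your count $1+(2^{n-1}-4)=2^{n-1}-3$ is correct, and so is your separate treatment of $n=3$.

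The gap is in the one step that makes the enumeration exhaustive. You justify independence of $\mathcal N$ by saying that any two of its vertices lie in a common maximal subgroup. This is false for every pair $H\in I$, $K\in J$. If $a^{2t}b\in H$ and $a^{2u+1}b\in K$, then $a^{2t}b\,(a^{2u+1}b)^{-1}=a^{2(t-u)-1}$ generates $\langle a\rangle$. Hence $\langle H,K\rangle=Q_{2^n}$, and no maximal subgroup contains both. Lemma \ref{lem:twohalves} only shows that $I$ and $J$ are each independent; the paper's appeal to Proposition \ref{invariants-quaternion}(2) rests on the same lemma. Without the cross pairs, a triangle $\{\langle a\rangle,H,K\}$ with $H\in I$, $K\in J$ is not ruled out. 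Your Case 2 assertion that $H$ has only two neighbours is then unsupported. The parity check you single out as the main obstacle only controls neighbours inside $\mathcal M$.

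The fix is a size count rather than a containment argument. Each $H\in\mathcal N$ is $\langle a^{2^s},x\rangle$ with $2\le s\le n-2$ and $x=a^kb$. Hence $H=\langle a^{2^s}\rangle\cup\langle a^{2^s}\rangle x\subseteq\langle a^4\rangle\{1,x\}$. If also $K\subseteq\langle a^4\rangle\{1,y\}$, normality of $\langle a^4\rangle$ gives $HK\subseteq\langle a^4\rangle\{1,x,y,xy\}$. Therefore $|HK|\le 4\cdot 2^{n-3}=2^{n-1}<2^n$, and with this your argument is complete.

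Your suspicion about Proposition \ref{invariants-quaternion}(3) is also right. The degrees in Corollary \ref{sequenceGQ} give $|E(\Gamma'(Q_{2^n}))|=2^n-5$. The handshake in the paper assigns degree $1$ to the vertices of $I\cup J$ and drops the edge $\langle a^2,b\rangle\sim\langle a^2,ab\rangle$. With the corrected size, your second count also yields $2^{n-1}-3$. In that count, however, every triangle contains $\langle a\rangle$ for a different reason than the one you give: not because $\langle a\rangle$ is universal, but because $\mathcal N$ is independent and no vertex of $\mathcal N$ is adjacent to both $\langle a^2,b\rangle$ and $\langle a^2,ab\rangle$.
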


\begin{proof}
Using Theorems  \ref{book2}, \ref{girth_Q} and item (2) of Proposition \ref{invariants-quaternion} we have that the number of triangles in $\Gamma'(Q_{2^n})$ is $2^{n-1}-3$. 
\end{proof}

We now turn to the planarity of the reduced factorization graph $\Gamma'(Q_{2^n})$.
The cases $n=3$ and $n=4$ can be verified directly from Figure \ref{FgG}. For $n\geq 5$, the graph admits a particularly rigid structure: it is obtained by gluing two large triangular book subgraphs along the common vertex $\langle a\rangle$. This structural decomposition provides a direct proof of planarity.

\begin{theorem}\label{planarQ}
$\Gamma'(Q_{2^n})$ is a planar graph for $n\geq 3$.
\end{theorem}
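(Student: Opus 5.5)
The plan is to pin down the edge set of $\Gamma'(Q_{2^n})$ exactly, and then observe that the graph is, up to subdividing edges, a multigraph on three vertices. Planarity will then follow because subdividing edges preserves planarity.

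\emph{Step 1 (vertices).} By Theorem \ref{puntosaislados}, the vertices of $\Gamma'(Q_{2^n})$ are the three maximal subgroups $\langle a\rangle$, $C_{\mathrm{even}}=\langle a^2,b\rangle$ and $C_{\mathrm{odd}}=\langle a^2,ab\rangle$, together with the set $\mathcal{N}=I\cup J$ of non-cyclic non-maximal subgroups $\langle a^j,a^kb\rangle$ with $j>2$. For $n=3$ the set $\mathcal{N}$ is empty, so $\Gamma'(Q_8)$ is the triangle on $\mathcal{M}$ (Theorem \ref{girth_Q}) and is clearly planar. Henceforth assume $n\geq 4$.

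\emph{Step 2 (edges).} Among $\mathcal{M}$ we have a triangle, by Theorem \ref{girth_Q}. By Theorem \ref{estar_extension}, every $H\in\mathcal{N}$ is adjacent to $\langle a\rangle$. Let $H\in I$. By Lemma \ref{lem:twohalves}, $H$ is adjacent to $C_{\mathrm{odd}}$. Since $H\leq C_{\mathrm{even}}$, we get $HC_{\mathrm{even}}=C_{\mathrm{even}}\neq Q_{2^n}$, so $H\not\sim C_{\mathrm{even}}$. Symmetrically, each $H\in J$ is adjacent to $\langle a\rangle$ and $C_{\mathrm{even}}$ only among $\mathcal{M}$. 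Two vertices of $\mathcal{N}$ are never adjacent. This is clear if both lie in $I$, or both in $J$, since their product then stays inside one maximal subgroup. For $H\in I$ and $H'\in J$, I would use the order formula $|HH'|=|H||H'|/|H\cap H'|$. Both subgroups have order at most $2^{n-2}$, because $j\geq 4$. Hence $|HH'|\leq 2^{2n-4}/|H\cap H'|$, and $H\cap H'$ contains $\langle a^{\max(j,j')}\rangle$. A short count gives $|HH'|<2^n$. Alternatively, one can simply cite that each vertex of $\mathcal{N}$ has degree $2$ (Corollary \ref{sequenceGQ}), which already accounts for its two neighbours in $\mathcal{M}$. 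Either way, every $H\in\mathcal{N}$ has degree exactly $2$, with neighbours $\langle a\rangle$ and exactly one of $C_{\mathrm{odd}}$, $C_{\mathrm{even}}$.

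\emph{Step 3 (planarity).} Suppress each degree-$2$ vertex $H\in\mathcal{N}$ by replacing the path $\langle a\rangle - H - C$ with a single edge $\langle a\rangle C$. The result is a multigraph on the three vertices of $\mathcal{M}$: a triangle in which the edges $\langle a\rangle C_{\mathrm{odd}}$ and $\langle a\rangle C_{\mathrm{even}}$ carry $2(2^{n-3}-1)$ additional parallel copies each. A multigraph on three vertices is planar: draw the parallel copies as nested arcs between the same two points. Then $\Gamma'(Q_{2^n})$ is obtained from this drawing by subdividing edges, and subdivision preserves planarity. Concretely, the drawing glues the two triangular books of Theorem \ref{book2} along $\langle a\rangle$, with the two books placed in opposite half-planes and the edge $C_{\mathrm{odd}}C_{\mathrm{even}}$ drawn around the outside.

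The only real point of care is Step 2: making sure $\mathcal{N}$ has no extra edges, in particular no edges between $I$ and $J$. I expect to settle this via the degree count in Corollary \ref{sequenceGQ}, with the order computation as a fallback. The remaining steps are routine.
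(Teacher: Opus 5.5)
Your proof is correct. It rests on the same structural picture as the paper's: two triangular books hanging on the edges $\langle a\rangle\langle a^2,ab\rangle$ and $\langle a\rangle\langle a^2,b\rangle$. You verify more, however, and you use a different planarity mechanism.

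The paper checks $n=3,4$ against Figure~\ref{FgG}. For $n\geq 5$ it calls $\Gamma'(Q_{2^n})$ the one-point union at $\langle a\rangle$ of the two books of Theorem~\ref{book2}. That is not literally accurate, because the edge $\langle a^2,b\rangle\langle a^2,ab\rangle$ belongs to neither book. One must still argue that the two spine endpoints can share a face before adding that edge. Your drawing (books in opposite half-planes, that edge around the outside) supplies exactly this. So does your view of $\Gamma'(Q_{2^n})$ as a subdivided three-vertex multigraph, and it handles all $n\geq 4$ at once without appealing to a figure.

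Your Step 2 also adds something real. The paper's only argument that $I$ and $J$ are mutually non-adjacent, in the proof of Theorem~\ref{estar_extension}, asserts that any two vertices of $\mathcal N$ lie in a common maximal subgroup. That fails for opposite parities, so your order computation is genuinely needed. Carry it out with the exact orders $|H|=2^n/j$ and $|H'|=2^n/j'$. Combined with $|H\cap H'|\geq 2^{n-1}/\max(j,j')$, this gives $|HH'|\leq 2^{n+1}/\min(j,j')\leq 2^{n-1}$. The crude bound $|H|,|H'|\leq 2^{n-2}$ you wrote only yields $|HH'|\leq 2^{n-3}\max(j,j')$, which is not below $2^n$ once $n\geq 5$.

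Prefer this computation to citing Corollary~\ref{sequenceGQ}. Its statement is true, but the paper derives it from Theorem~\ref{estar_extension} and Proposition~\ref{invariants-quaternion}, which misstate the degrees; the latter gives degree $1$ to the vertices of $I\cup J$.

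Finally, ``non-cyclic'' is inaccurate for $j=2^{n-2}$, where $\langle a^j,a^kb\rangle=\langle a^kb\rangle\cong C_4$. The set you actually use is nonetheless the right one.
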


\begin{proof}
Assume now $n \geq 5$. By Theorem \ref{book2}, the graph $\Gamma'(Q_{2^n})$ contains two maximal triangular book subgraphs, each isomorphic to $B_p$, where $p = 2(2^{n-3}-1)$. Each triangular book $B_p$ is planar. Moreover, these two copies intersect exactly in the vertex $\langle a \rangle$. Hence, $\Gamma'(Q_{2^n})$ is the one-point union of two planar graphs. Since the one-point union of planar graphs is planar, it follows that $\Gamma'(Q_{2^n})$ is planar.
\end{proof}

\begin{figure}[H]
  \centering
\begin{tikzpicture}[    vtx/.style={circle,fill=black,inner sep=2pt},
    lbl/.style={font=\scriptsize,inner sep=1pt},]

\def\k{5}           
\def\dx{1.6}        
\def\shift{1}     
\def\dy{1.2}        
\def\gap{2.0}       

\node[vtx] (Top) at (0,  \gap) {};
\node[lbl,above left=-1pt]  at (Top) {$\langle a^{2}b\rangle$};

\node[vtx] (Mid) at (0,   0) {};
\node[lbl,left]             at (Mid) {$\langle a\rangle$};

\node[vtx] (Bot) at (0, -\gap) {};
\node[lbl,below left=-1pt]  at (Bot) {$\langle a^{2},ab\rangle$};

\draw[very thick] (Top) to[out=180,in=180] (Bot);   
\draw[very thick, bend left=15] (Top) -- (Mid) -- (Bot);          

\foreach \j in {1,...,\numexpr\k-2\relax}{
  \coordinate (Up\j) at (\j*\dx,  \dy);
  \node[vtx] (AU\j) at (Up\j) {};
  \node[lbl,anchor=west] at (Up\j) {\ $A_{\j}$};
  \draw[very thick, bend left=15] (Top) -- (AU\j) (Mid) -- (AU\j);
}

\coordinate (dotsU) at ({(\k-1.5)*\dx}, \dy);
\node[lbl,anchor=west] at (dotsU) {$\cdots$};

\coordinate (Upk) at ({\k*\dx-\shift}, \dy); 
\node[vtx] (AUk)  at (Upk) {};
\node[lbl,anchor=west] at (Upk) {\ $A_{j}$};

\draw[very thick, bend left=15] (Top) -- (AUk) (Mid) -- (AUk) (Top) -- (AUk);    

\foreach \j in {1,...,\numexpr\k-2\relax}{
  \coordinate (Dw\j) at (\j*\dx, -\dy);
  \node[vtx] (AD\j) at (Dw\j) {};
  \node[lbl,anchor=west] at (Dw\j) {\ $A_{\j}$};
  \draw[very thick, bend left=15] (Bot) -- (AD\j) (Mid) -- (AD\j);
}

\coordinate (dotsD) at ({(\k-1.5)*\dx}, -\dy);
\node[lbl,anchor=west] at (dotsD) {$\cdots$};

\coordinate (Dwk) at ({\k*\dx-\shift}, -\dy); 
\node[vtx] (ADk)  at (Dwk) {};
\node[lbl,anchor=west] at (Dwk) {\ $A_{j}$};

\draw[very thick, bend left=15] (Bot) -- (ADk) (Mid) -- (ADk) (Bot) -- (ADk);    
\end{tikzpicture}
 
  \caption{A planar representation of $\Gamma'(Q_{2^n})$}
  \label{figura y}
\end{figure}

\begin{theorem}\label{thm:chromaticQ}
For $n\geq 4$, we have $\chi(\Gamma'(Q_{2^n}))=3$.
\end{theorem}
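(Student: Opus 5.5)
The plan is to prove the two inequalities $\chi(\Gamma'(Q_{2^n}))\geq 3$ and $\chi(\Gamma'(Q_{2^n}))\leq 3$ separately. For the lower bound I will invoke Theorem \ref{girth_Q}. There the three maximal subgroups $\langle a\rangle$, $\langle a^2,b\rangle$ and $\langle a^2,ab\rangle$ were shown to form a triangle. None of these vertices is isolated, so the triangle lies in $\Gamma'(Q_{2^n})$. Hence $\chi(\Gamma'(Q_{2^n}))\geq\omega(\Gamma'(Q_{2^n}))\geq 3$, in agreement with Proposition \ref{invariants-quaternion}(1).

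For the upper bound I will write down an explicit proper $3$-coloring, using the vertex partition from Lemma \ref{lem:twohalves} and Theorem \ref{book2}. Every vertex of $\Gamma'(Q_{2^n})$ lies in exactly one of the following classes:
\begin{enumerate}
\item $\{\langle a\rangle\}$;
\item $\{\langle a^2,b\rangle\}\cup I$, where every element of $I$ is contained in $\langle a^2,b\rangle$;
\item $\{\langle a^2,ab\rangle\}\cup J$, where every element of $J$ is contained in $\langle a^2,ab\rangle$.
\end{enumerate}
Color these three classes with colors $1$, $2$ and $3$, respectively.

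It remains to check that each color class is an independent set.
\begin{itemize}
\item Class 1 is a single vertex, so there is nothing to check.
\item Take two distinct vertices $H,K$ of class 2. Both lie in $\langle a^2,b\rangle$, so $HK\subseteq\langle a^2,b\rangle\neq Q_{2^n}$. Hence $H\not\sim K$. This covers pairs with one vertex equal to $\langle a^2,b\rangle$ and an element of $I$, as well as pairs of two elements of $I$.
\item The same containment argument with $\langle a^2,ab\rangle$ in place of $\langle a^2,b\rangle$ handles class 3.
\end{itemize}
Thus the coloring is proper, and $\chi\leq 3$. Combined with the lower bound, this gives $\chi(\Gamma'(Q_{2^n}))=3$.

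The main point needing care is that the partition really is exhaustive and consistent. Every non-maximal vertex of $\Gamma'$ must have the form $\langle a^j,a^kb\rangle$ and therefore lie in $I\cup J$. This follows from Theorem \ref{puntosaislados} and the description of $\mathcal N$ in Theorem \ref{book2}. One must also keep track of which maximal subgroup contains the elements of $I$ (even $k$) versus $J$ (odd $k$). Only containment matters for the coloring, not the labels of the stars. The hypothesis $n\geq 4$ guarantees that $I$ and $J$ are defined as in Lemma \ref{lem:twohalves}. For $n=3$ the graph is just a triangle, and the same conclusion holds trivially.
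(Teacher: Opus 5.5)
Your proof is correct. It differs from the paper's at the one step that matters, the $3$-coloring.

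The paper colors $\langle a\rangle$ with $1$, all of $I\cup J$ with $2$, and \emph{both} $\langle a^2,b\rangle$ and $\langle a^2,ab\rangle$ with $3$. These two maximal subgroups are adjacent: they lie on the very triangle used for the lower bound. So the paper's assignment is not a proper coloring as written.

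Your partition puts each non-cyclic maximal subgroup in the same class as the half of $\mathcal N$ it contains. This is exactly the repair, and it is in fact forced. Once $\langle a\rangle$, $\langle a^2,b\rangle$, $\langle a^2,ab\rangle$ receive distinct colors, each vertex of $I$ is adjacent to $\langle a\rangle$ and $\langle a^2,ab\rangle$. It must therefore take the color of $\langle a^2,b\rangle$, and symmetrically each vertex of $J$ takes the color of $\langle a^2,ab\rangle$.

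Your route also gives a cleaner verification. Independence of each class follows at once from $HK\subseteq M\neq Q_{2^n}$ for subgroups of a common proper subgroup $M$. So you never need independence of $I\cup J$ across the two halves. The paper asserts that in Theorem~\ref{estar_extension} with a containment argument that only covers pairs inside the same maximal subgroup; the claim holds, but needs an order computation rather than containment.

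Your only external input is that every non-isolated vertex other than $\langle a\rangle$ is some $\langle a^j,a^kb\rangle$ with $j\mid 2^{n-2}$. That is the same subgroup classification the paper relies on.
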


\begin{proof}
By Proposition \ref{invariants-quaternion} (1), the graph $\Gamma'(Q_{2^n})$ contains a triangle.
Hence $\chi(\Gamma'(Q_{2^n}))\geq 3$.

We now construct a proper $3$-coloring. Assign color $1$ to the vertex $\langle a\rangle$.
By Lemma \ref{lem:twohalves}, every vertex in $I$ is adjacent to $\langle a^2,ab\rangle$ and every vertex in $J$
is adjacent to $\langle a^2,b\rangle$. Color all vertices in $I\cup J$ with color $2$.
Finally, assign color $3$ to the vertices $\langle a^2,b\rangle$ and $\langle a^2,ab\rangle$.

This is a proper coloring since:
\begin{itemize}
\item $\langle a\rangle$ is adjacent to every vertex of $\Gamma'(Q_{2^n})$;
\item there are no edges inside $I\cup J$ by Lemma \ref{lem:twohalves};
\item the vertices $\langle a^2,b\rangle$ and $\langle a^2,ab\rangle$ are adjacent to $\langle a\rangle$ and to their
respective halves.
\end{itemize}
Therefore $\chi(\Gamma'(Q_{2^n}))\leq 3$, and the result follows.
\end{proof}

\section{The Graph for the Dicyclic group}

Let $m\geq 3$ be an odd integer. The \emph{Dicyclic group} $Q_{4m}$ has order $4m$ and the following presentation
\[Q_{4m} = \bigl\langle a,b \mid a^{2m}=1, \, b^{2}=a^{m},\; bab^{-1} = a^{-1} \bigr\rangle .\]

The set of all non–trivial subgroups of $Q_{4m}$, denoted by $\mathcal{S}$, is 
\begin{equation}\label{S}
\mathcal S = \bigcup_{\substack{d\, \mid \, 2m\\[2pt] 1\leq d < 2m}}
\bigl\langle a^{\,d}\bigr\rangle \;\cup\; \bigcup_{k=0}^{m-1}
        \bigl\langle a^{k} b \bigr\rangle.
\end{equation}

\smallskip

It is well known that Burnside classified 2-groups with a unique involution: they are either cyclic or generalized quaternion groups. See \cite[Chapter V, Theorem V]{Burnside}.

Let $n$ be a natural number. The divisor sum function of n, denoted by $\sigma (n)$, is defined
\[{\displaystyle \sigma (n)=\sum _{d\, \mid\, n} d}.\]
The divisor count function of $n$, denoted $\tau (n)$, is defined as the number of positive divisors of $n$. 

\begin{theorem}
The size of the set $S$ defined in \eqref{S} is $|\mathcal S| = 2\tau(m) + m - 1$.
\end{theorem}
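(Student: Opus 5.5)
The plan is to count the collection in \eqref{S} directly. I will treat it as a set of subgroups, not as a union of their elements. It splits into two disjoint families: the cyclic subgroups $\langle a^d\rangle$ of $\langle a\rangle$ with $d\mid 2m$ and $1\le d<2m$, and the subgroups $\langle a^k b\rangle$ with $0\le k\le m-1$. I will count each family, show the two families share no member, and add the counts.

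\emph{First family.} Since $\langle a\rangle\cong C_{2m}$, there is exactly one subgroup of $\langle a\rangle$ for each divisor of $2m$, and distinct divisors $d$ give distinct subgroups $\langle a^d\rangle$ of order $2m/d$. Excluding $d=2m$ removes only the trivial subgroup. Because $m$ is odd, $\gcd(2,m)=1$, so multiplicativity of $\tau$ gives $\tau(2m)=\tau(2)\tau(m)=2\tau(m)$. Hence this family contributes $2\tau(m)-1$ subgroups.

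\emph{Second family.} From the relations, $(a^kb)^2=a^kba^kb=a^ka^{-k}b^2=a^m$, which is the unique involution of $\langle a\rangle$. So $a^kb$ has order $4$, and
\[\langle a^kb\rangle=\{1,\;a^kb,\;a^m,\;a^{k+m}b\}.\]
If $\langle a^kb\rangle=\langle a^{k'}b\rangle$ with $0\le k,k'\le m-1$, then $a^{k'}b\in\{a^kb,a^{k+m}b\}$, so $k'\equiv k$ or $k'\equiv k+m\pmod{2m}$. The range $0\le k,k'\le m-1$ forces $k=k'$. Thus the $m$ subgroups in this family are pairwise distinct.

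\emph{Disjointness.} Every $\langle a^kb\rangle$ contains the element $a^kb\notin\langle a\rangle$, so no member of the second family equals any $\langle a^d\rangle$. Adding the two counts gives $|\mathcal S|=(2\tau(m)-1)+m=2\tau(m)+m-1$.

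The only step needing care is the distinctness of the $\langle a^kb\rangle$. It rests on the computation $(a^kb)^2=a^m$ and on the observation that $k$ and $k+m$ give the same subgroup, which is exactly why the index is restricted to $0\le k\le m-1$. Everything else is the standard description of the subgroups of a cyclic group, as recalled in the Preliminaries.
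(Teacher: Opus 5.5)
Your proof is correct and follows the paper's route: split $\mathcal S$ into the $\tau(2m)-1=2\tau(m)-1$ nontrivial subgroups of $\langle a\rangle$ and the $m$ cyclic subgroups $\langle a^kb\rangle$ of order $4$, check the two families are disjoint, and add. Your distinctness argument via $\langle a^kb\rangle=\{1,a^kb,a^m,a^{k+m}b\}$ correctly replaces the paper's claim that the cosets $a^kb\langle a\rangle$ are distinct, which is false since these cosets all coincide. You also rightly count the set exactly as written, rather than asserting, as the paper's proof does, that it contains every subgroup of $Q_{4m}$; that assertion fails for composite $m$, e.g.\ $\langle a^3,b\rangle$ of order $12$ in $Q_{36}$.
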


\begin{proof}
Every proper nontrivial subgroup of $Q_{4m}$ falls into exactly one of the following two classes, which are disjoint.
\begin{enumerate}[i)]
\item \emph{Subgroups contained in $\langle a\rangle$.} The subgroup $\langle a\rangle\cong C_{2m}$ is cyclic, so it     possesses $\tau(2m)-1$ proper subgroups. Since $\gcd(2,m)=1$ and $\tau(2m) = \tau(2)\,\tau(m)=2\,\tau(m)$. Hence, the number of nontrivial subgroups of $\langle a\rangle$ is $2\tau(m)-1$. 

\item \emph{Subgroups generated by an element outside $\langle a\rangle$.} Let $x=a^{k}b\notin\langle a\rangle$.  Then $x^{2}=a^{m}$ and $x^{4}=1$; thus $K_j:=\langle x\rangle$ is cyclic of order $4$. The cosets $a^{k}b\langle a\rangle$ with $0\leq k<m$ are all distinct, giving exactly $m$ subgroups $K_0,\dots,K_{m-1}$. Any subgroup containing an element of the form $a^{k}b$ equals $\langle a^{k}b\rangle$, so no further subgroups occur.
\end{enumerate}
Summing both classes gives $|\mathcal S| =2\tau(m)+ m-1$.
\end{proof}

In the next theorem we can observe that $Q_{4m}$ only admits factorizations of the form $Q_{4m}= H\langle a^{k}b\rangle$ with $H=\langle a\rangle$ or $H=\langle a^{2}\rangle$ and $0\leq k\leq m-1$.


The next theorem provides a complete description of the reduced factorization graph in this section. All subsequent results follow as direct consequences.

\begin{theorem}\label{subq4}
For non–trivial subgroups $H$ and $K$ of $Q_{4m}$ we have $Q_{4m}= HK$ if and only if $H=\langle a\rangle$ or $H=\langle a^{2}\rangle$ and $K= \langle a^{k}b\rangle$ with $0\leq k\leq m-1$.
\end{theorem}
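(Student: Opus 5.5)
The plan is a direct order count. I take as given the description \eqref{S} of the nontrivial proper subgroups of $Q_{4m}$: the cyclic subgroups $\langle a^d\rangle$ with $d\mid 2m$, $1\le d<2m$, and the $m$ cyclic subgroups $K_k=\langle a^kb\rangle$ of order $4$. I read the statement symmetrically in $H$ and $K$, since the relation $HK=G$ is symmetric for a factorization. For each unordered pair of types I will compute $|HK|=|H||K|/|H\cap K|$ and compare it with $4m$. There are three cases.

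\emph{Case 1: both subgroups lie in $\langle a\rangle$.} Then $HK\le\langle a\rangle$, which has index $2$. So $HK\neq Q_{4m}$.

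\emph{Case 2: $H=K_i$ and $K=K_j$ with $i\neq j$.} Both contain $(a^kb)^2=a^m$, so $|H\cap K|\ge 2$. Hence $|HK|\le 16/2=8<4m$, because $m\ge 3$. So again $HK\neq Q_{4m}$.

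\emph{Case 3: $H=\langle a^d\rangle$ and $K=K_k$.} This is the only case with real content. We have $H\cap K\le\langle a\rangle\cap K=\langle a^m\rangle$, so $H\cap K$ is either $\langle a^m\rangle$ or trivial. Moreover $a^m\in\langle a^d\rangle$ if and only if $|H|=2m/d$ is even. Since $m$ is odd, this happens exactly when $d$ is odd, i.e.\ $d\mid m$.
\begin{itemize}
\item If $d$ is odd, then $|H\cap K|=2$, so $|HK|=(2m/d)\cdot 4/2=4m/d$. This equals $4m$ if and only if $d=1$, that is, $H=\langle a\rangle$.
\item If $d=2d'$ is even, then $|H|=m/d'$ is odd and $H\cap K=1$, so $|HK|=4m/d'$. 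This equals $4m$ if and only if $d'=1$, that is, $H=\langle a^2\rangle$.
\end{itemize}
The converse direction comes from the same computations: $\langle a\rangle K_k$ and $\langle a^2\rangle K_k$ both have order $4m$, so each is all of $Q_{4m}$ for every $0\le k\le m-1$.

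The computations themselves are routine; the main obstacle is the completeness of the subgroup list \eqref{S}, on which the case split depends entirely. I would re-check that list before relying on it. A subgroup containing some $a^kb$ together with a nontrivial $a^d$, for instance $\langle a^d,a^kb\rangle$ with $d$ a proper odd divisor of $m$, has order $4m/d$. Such a subgroup would factor against $\langle a\rangle$, because it is not contained in $\langle a\rangle$ and its intersection with $\langle a\rangle$ has order $2m/d$, giving product order $2m\cdot(4m/d)/(2m/d)=4m$. So when $m$ is not prime the list may need to be enlarged, and the statement adjusted accordingly. Under the classification \eqref{S} as stated, the three cases above are exhaustive, and the theorem follows.
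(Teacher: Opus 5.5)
Your argument follows the paper's own proof: the same count $|HK|=|H||K|/|H\cap K|$, with the split into $d\mid m$ and $d$ even. Your Case~2 (two subgroups of order $4$, whose product has at most $8<4m$ elements) covers a case that the paper's proof silently drops with its ``without loss of generality''.

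Your doubt about $\mathcal S$ is justified, and it is not a matter of ``may''. Take any composite odd $m$ and any divisor $1<d<m$ of $m$. The subgroup $\langle a^d,a^kb\rangle$ has order $4m/d$ and is missing from $\mathcal S$. Since $\langle a\rangle$ is normal of index $2$, it satisfies $\langle a\rangle\langle a^d,a^kb\rangle=Q_{4m}$; already for $m=9$ one has $\langle a\rangle\langle a^3,b\rangle=Q_{36}$.

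So the ``only if'' half of the theorem is false for composite $m$. The paper's proof rests on the same list and has the same defect. With the full lattice there are further factorizations, for example $\langle a^e\rangle\langle a^d,a^kb\rangle=Q_{4m}$ exactly when $\gcd(e,d)=1$.

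The ``if'' half holds for all odd $m$ by your computation. For $m$ prime the list is complete. A subgroup $L\not\le\langle a\rangle$ equals $\langle L\cap\langle a\rangle,a^kb\rangle$, and $a^m\in L\cap\langle a\rangle=\langle a^e\rangle$ forces $e\mid m$. Hence $e\in\{1,m\}$, and $L$ is either $Q_{4m}$ or $\langle a^kb\rangle$.

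Thus your proof is complete precisely when $m$ is an odd prime, which is the correct range of validity of the statement.
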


\begin{proof}
Suppose that $Q_{4m} = HK$ with $H,K<G$. At least one of the subgroups $H, K$ contains an element outside $\langle a\rangle$. Otherwise $HK \subseteq \langle a\rangle \neq G$. By \eqref{S}, every subgroup not contained in $\langle a\rangle$ has the form $\langle a^k b\rangle$.
Hence, without loss of generality, suppose $K=\langle a^{k}b\rangle$ for some $k$  and $H=\langle a^{d}\rangle \leq \langle a\rangle$ with $d\mid 2m$ and $d<2m$. Then $|H|=2m/d$. 
\begin{itemize}
\item If $d\mid m$, then $a^{m}\in H$, so $|H\cap K|=2$. Hence , $|HK| = \frac{4(2m/d)}{2}=4m/d$.
For this to equal $4m$, we need $d=1$, that is $H=\langle a\rangle$.

\item If $d\nmid m$ (necessarily $d$ even) then $a^{m} \notin H$ and $|H\cap K|=1$. Thus $|HK|=4(2m/d)=8m/d$. Setting this equal to $4m$ forces $d=2$, that is $H=\langle a^{2}\rangle$.
\end{itemize}

Reciprocally, if $H$ is either $\langle a\rangle$ or $\langle a^{2}\rangle$ and $K= \langle a^{k}b \rangle$, then $|HK|=4m$; hence $HK=Q_{4m}$.
\end{proof}

\smallskip

Let us again denote with $\Gamma'(Q_{4m})$ the subgraph obtained from $\Gamma(Q_{4m})$ by removing the isolated vertices.

\begin{corollary}\label{Dicyclic}
Let $m \geq 3$ be an odd integer. Then the reduced factorization graph $\Gamma'(Q_{4m})$ satisfies the following:
\begin{enumerate}
    \item The graph $\Gamma'(Q_{4m})$ is isomorphic to the complete 
          bipartite graph $K_{2,m}$.
    \item $\Gamma'(Q_{4m})$ is planar.
    \item The graph $\Gamma'(Q_{4m})$ has a maximal induced star 
          with central vertex $\langle a \rangle$ or $\langle a^2 \rangle$ 
          and $m$ leaves.
    \item The total domination number of $\Gamma'(Q_{4m})$ is 
      $\gamma_t(\Gamma'(Q_{4m})) = 2$. Moreover, any set 
      consisting of one vertex from $X$ and one vertex from 
      $Y$ is a minimum total dominating set.
    \item $\mathrm{girth}(\Gamma'(Q_{4m})) = 4$.
    \item The order of $\Gamma'(Q_{4m})$ is $m+2$ and its size is $2m$.
    \item $\alpha(\Gamma'(Q_{4m})) = m$.
    \item $\Gamma'(Q_{4m})$ is non-Eulerian but semi-Eulerian.
    \item $\Gamma'(Q_{4m})$ is non-Hamiltonian.
    \item $\Gamma'(Q_{4m})$ is a perfect graph.
    \item The center of $\Gamma'(Q_{4m})$ is the complete vertex set, 
          and $\mathrm{diam}(\Gamma'(Q_{4m})) = 2$.
\end{enumerate}
\end{corollary}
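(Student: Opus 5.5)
The plan is to derive everything from Theorem \ref{subq4}, which says the only edges of $\Gamma(Q_{4m})$ join $X=\{\langle a\rangle,\langle a^2\rangle\}$ to $Y=\{\langle a^kb\rangle : 0\le k\le m-1\}$. First we establish (1). By Theorem \ref{subq4}, every vertex of $X$ is adjacent to every vertex of $Y$. No two vertices of $X$ are adjacent, since $\langle a\rangle\langle a^2\rangle\le\langle a\rangle$. No two vertices of $Y$ are adjacent, since Theorem \ref{subq4} requires one factor to lie in $X$; directly, $|\langle a^kb\rangle\langle a^lb\rangle|=16/2=8<4m$ because $m\ge3$. Every other vertex $\langle a^d\rangle$ with $d\ne1,2$ has no neighbour and is removed. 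Hence $\Gamma'(Q_{4m})\cong K_{2,m}$ with parts $X$ and $Y$, and $|Y|=m$ by the counting in the previous theorem.

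Next we read off the invariants of $K_{2,m}$, $m\ge3$ odd.
\begin{itemize}
\item Order and size: the order is $m+2$ and the size is $2m$, which gives (6).
\item Planarity (2): the graph is planar, since $K_{2,m}$ can be drawn with the two vertices of $X$ as poles and the $m$ vertices of $Y$ on a line between them. Equivalently, it contains neither a $K_5$- nor a $K_{3,3}$-subdivision, because only two vertices have degree greater than $2$.
\item Induced star (3): each $x\in X$ together with $Y$ induces $K_{1,m}$. This star is maximal because the only vertex left over is the other element of $X$, which is adjacent to all leaves.
\item Total domination (4): a single vertex does not totally dominate itself, so $\gamma_t\ge2$. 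A pair $\{x,y\}$ with $x\in X$, $y\in Y$ works, since $x$ dominates $Y$ and $y$ dominates $X$. Two vertices from the same part fail, since they dominate only the opposite part.
\item Girth (5): the graph is bipartite, so it has no triangles, and $x_1y_1x_2y_2x_1$ is a $4$-cycle. Hence the girth is $4$.
\item Independence number (7): $Y$ is independent, and any independent set meeting $X$ misses all of $Y$. Hence $\alpha=\max(m,2)=m$.
\item Eulerian properties (8): the vertices of $X$ have degree $m$, which is odd, and the vertices of $Y$ have degree $2$. There are exactly two odd-degree vertices in a connected graph, so the graph is semi-Eulerian but not Eulerian.
\item Hamiltonicity (9): a Hamiltonian cycle in a bipartite graph alternates between the parts, forcing $|X|=|Y|$. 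Since $2\ne m$, there is none.
\item Perfectness (10): bipartite graphs are perfect.
\item Metric properties (11): distances are $1$ between the parts and $2$ within a part, using a common neighbour. Every vertex therefore has eccentricity $2$, so the center is the whole vertex set and $\mathrm{diam}=2$.
\end{itemize}

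The only step needing care is (1), specifically the claim that the vertices outside $X\cup Y$ are isolated and that $Y$ is independent. This follows from Theorem \ref{subq4} together with the description \eqref{S} of the subgroups; the rest consists of standard facts about $K_{2,m}$.
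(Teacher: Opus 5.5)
Your plan (derive $\Gamma'(Q_{4m})\cong K_{2,m}$ from Theorem~\ref{subq4}, then read off the invariants of $K_{2,m}$) is exactly the paper's proof. Your $X$ and $Y$ are the paper's $Y$ and $X$, which is harmless since (4) is symmetric. When $m$ is an odd prime, every step you write is correct.

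The gap is the step you yourself flagged in (1), and you inherit it from the paper. The subgroup list \eqref{S}, and with it Theorem~\ref{subq4}, holds only for $m$ prime. A subgroup $H\not\le\langle a\rangle$ has the form $\langle a^d,a^kb\rangle$ with $\langle a^d\rangle=H\cap\langle a\rangle$. The condition $(a^kb)^2=a^m\in\langle a^d\rangle$ forces $d\mid m$. For composite $m$, the divisors $1<d<m$ give extra subgroups of order $4m/d$. The claim in the counting theorem that any subgroup containing $a^kb$ equals $\langle a^kb\rangle$ is therefore false. Two parts of your step (1) break down: the implicit assumption that every vertex is some $\langle a^d\rangle$ or $\langle a^kb\rangle$, and the claim that $\langle a^d\rangle$ with $d\neq 1,2$ is isolated.

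Two concrete counterexamples:
\begin{enumerate}
\item[(a)] Take $m=9$. The subgroup $\langle a^3,b\rangle$ has order $12$ and meets $\langle a^2\rangle$ in $\langle a^6\rangle$, of order $3$. Hence $|\langle a^3,b\rangle\langle a^2\rangle|=12\cdot 9/3=36$. Checking all $17$ vertices gives $\Gamma'(Q_{36})\cong K_{2,12}$. So (1), (3), (6) and (7) fail. Item (8) fails too, since $\langle a\rangle$ and $\langle a^2\rangle$ now have even degree $12$.
\item[(b)] Take $m=15$. The subgroups $\langle a^3,b\rangle$ (order $20$) and $\langle a^5,ab\rangle$ (order $12$) intersect in $\langle a^6b\rangle$, of order $4$. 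Their product is therefore $Q_{60}$, and together with $\langle a\rangle$ they form a triangle. This contradicts bipartiteness and (5). Also, $\langle a^{10}\rangle$ meets $\langle a^3,b\rangle$ trivially, so it is adjacent to it and not isolated.
\end{enumerate}

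So your argument, like the paper's, proves the corollary only under the extra hypothesis that $m$ is prime. For $m=p^e$ with $e\ge 2$, the same computation gives $K_{2,\sigma(m)-1}$ instead. If $m$ has two distinct prime factors $p$ and $q$, then $\langle a^p,b\rangle$, $\langle a^q,b\rangle$ and $\langle a\rangle$ form a triangle, so the graph is not even bipartite.
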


\begin{proof}
Let $X = \{\langle a^k b \rangle \mid 0 \leq k < m\}$ and 
$Y = \{\langle a \rangle, \langle a^2 \rangle\}$.

\medskip
\noindent (1) By Theorem \ref{subq4}, two distinct proper 
nontrivial subgroups $H, K$ of $Q_{4m}$ satisfy $HK = Q_{4m}$ if and 
only if one belongs to $Y$ and the other to $X$. Since 
$X \cap Y = \emptyset$, $|X| = m$, and $|Y| = 2$, the graph 
$\Gamma'(Q_{4m})$ is the complete bipartite graph $K_{2,m}$ 
with parts $X$ and $Y$.

\medskip
\noindent Items (2) - (11) are direct consequences of (1) and standard 
properties of $K_{2,m}$.
Specifically: $K_{2,m}$ is outerplanar (2); each part induces a 
maximal star (3); the set $\{\langle a\rangle, \langle b\rangle\}$, 
with one vertex from each part, is a minimum total dominating 
set (4); the girth is $4$ since $K_{2,m}$ is bipartite and 
contains $4$-cycles (5); the order is $m+2$ and the size 
is $2m$ (6); the independence number is 
$\alpha = m$ (7); exactly two vertices have odd degree $m$, 
so the graph is semi-Eulerian but not Eulerian (8); 
$K_{2,m}$ is non-Hamiltonian since $|X| \neq |Y|$ (9); 
every bipartite graph is perfect (10); and a direct case 
analysis on pairs from $X$, $Y$, or mixed gives 
$\mathrm{diam} = 2$ with every vertex achieving 
eccentricity $2$ (11).
\end{proof}

\begin{theorem}\label{puntosaisladosII}
Let $m\geq 3$. Then the unique isolated point of $\Gamma(Q_{4m})$  is the subgroup $\langle a^{m}\rangle$.
\end{theorem}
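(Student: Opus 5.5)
The plan is to read off the isolated vertices directly from Theorem \ref{subq4} and the description \eqref{S} of the subgroup set $\mathcal S$. By Theorem \ref{subq4}, a vertex of $\Gamma(Q_{4m})$ has at least one neighbour if and only if it lies in $X\cup Y$, where
\[X=\{\langle a^k b\rangle \mid 0\leq k<m\},\qquad Y=\{\langle a\rangle,\langle a^2\rangle\},\]
as in the proof of Corollary \ref{Dicyclic}. Hence the isolated vertices are exactly the elements of $\mathcal S\setminus(X\cup Y)$. By \eqref{S}, these are the cyclic subgroups $\langle a^d\rangle$ with $d\mid 2m$, $1\leq d<2m$ and $d\notin\{1,2\}$.

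The first step is to list the divisors $d$ of $2m$ with $2<d<2m$. The divisor $d=m$ always occurs, since $m\geq 3$ is odd. It gives $\langle a^m\rangle$, the subgroup of order $2$ generated by the unique involution $a^m=b^2$. A direct check confirms it is isolated. Any $K=\langle a^kb\rangle$ contains $a^m$, so $\langle a^m\rangle K=K\neq Q_{4m}$. Any cyclic $\langle a^{d'}\rangle$ gives a product inside $\langle a\rangle$. This matches the computation $|HK|=4m/d$ in the first bullet of the proof of Theorem \ref{subq4}, which for $d=m$ equals $4$.

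The second step, uniqueness, is the main obstacle. It requires that $m$ be the only divisor of $2m$ strictly between $2$ and $2m$. That holds precisely when $m$ is prime: the divisors of $2m$ are then $1,2,m,2m$. If $m$ is composite, then for any divisor $1<e<m$ of $m$ the subgroups $\langle a^e\rangle$ and $\langle a^{2e}\rangle$ are also isolated by Theorem \ref{subq4}. For example, when $m=9$, the subgroup $H=\langle a^3\rangle$ has order $6$ and contains $a^9$, so $|H\langle a^kb\rangle|=12<36$. I would therefore prove the statement under the hypothesis that $m$ is an odd prime. For general odd $m$, the same argument gives the corrected version: the isolated vertices are exactly the $\langle a^d\rangle$ with $d\mid 2m$ and $d\notin\{1,2,2m\}$, and by the counting in the theorem on $|\mathcal S|$ there are $2\tau(m)-3$ of them. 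The final write-up would state the theorem with this dichotomy, with $\langle a^m\rangle$ the unique isolated vertex exactly when $m$ is prime.
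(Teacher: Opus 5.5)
Your argument for $m$ prime is correct and is essentially the paper's. The paper's Claim 2 asserts that every subgroup $H\le\gen{a}$ other than $\gen{a^m}$ is $\gen{a}$ or $\gen{a^2}$. That holds exactly when $2m$ has no divisor strictly between $2$ and $2m$ other than $m$, i.e.\ when $m$ is prime. You are also right that the theorem as stated is false in general: for $m=9$ the subgroups $\gen{a^3}$ and $\gen{a^6}$ are isolated too, so you have located a genuine gap in the paper's proof. Note that isolation of $\gen{a^m}$ itself holds for every odd $m$, since any subgroup not contained in $\gen{a}$ contains some $a^kb$ and hence $(a^kb)^2=a^m$. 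Only uniqueness depends on $m$.

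The error is in your proposed correction for composite $m$, which you read off from \eqref{S} and Theorem~\ref{subq4}. Both are valid only for $m$ prime.
For each divisor $d$ of $m$ with $1<d<m$, the set $\gen{a^d}\cup\gen{a^d}a^kb$ is a subgroup of order $4m/d$, because $a^kb$ inverts $a^d$ and $(a^kb)^2=a^m\in\gen{a^d}$. These subgroups $\gen{a^d,a^kb}$ are missing from \eqref{S}, and they create edges that Theorem~\ref{subq4} does not see. Since $\gen{a^e}\cap\gen{a^d}=\gen{a^{\lcm(e,d)}}$, one gets $|\gen{a^e}\,\gen{a^d,a^kb}|=4m/\gcd(e,d)$.

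Take $m=15$. The subgroups $\gen{a^3}$ (order $10$) and $\gen{a^5,b}$ (order $12$) meet in $\gen{a^{15}}$, so their product is all of $Q_{60}$. Likewise $\gen{a^5}$, $\gen{a^6}$ and $\gen{a^{10}}$ each factor $Q_{60}$ with $\gen{a^3,b}$ or $\gen{a^5,b}$. Hence $\gen{a^{15}}$ is the unique isolated vertex of $\Gamma(Q_{60})$. This contradicts both your count $2\tau(15)-3=5$ and your claim that uniqueness characterizes primes.

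The correct picture runs as follows. A vertex $H$ is non-isolated if and only if $HM=G$ for some maximal subgroup $M$. The maximal subgroups are $\gen{a}$ and the $\gen{a^p,a^kb}$ with $p\mid m$ prime. Moreover, $\gen{a^e}\gen{a^p,a^kb}=G$ if and only if $p\nmid e$. So the isolated vertices are exactly the nontrivial subgroups of $\Phi(Q_{4m})=\gen{a^{\rad(m)}}$. There are $2\tau(m/\rad(m))-1$ of them, and $\gen{a^m}$ is the unique one if and only if $m$ is square-free. Your description agrees with this only when $m$ is a prime power.

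Your $m=9$ check should also treat the subgroups $\gen{a^3,a^kb}$. They contain $\gen{a^3}$, so the product is proper and the counterexample survives.
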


\begin{proof}
Let $A=\langle a^m\rangle$.
\smallskip
\noindent\emph{Claim 1: $A$ is isolated.}
Let $H$ be any proper nontrivial subgroup of $G$. We distinguish two cases.

\smallskip
\noindent (i) $H\leq \langle a\rangle$.
Then $AH\leq \langle a\rangle\neq G$, hence $AH\neq G$.

\smallskip
\noindent (ii) $H\not\leq \langle a\rangle$.
Then $H$ contains an element of the form $a^k b$. As in the standard subgroup description for $Q_{4m}$,
this forces $H=\langle a^k b\rangle$, and in particular $(a^k b)^2=a^m\in H$, so $A\leq H$.
Thus $AH=H\neq G$. In both cases, $A$ is adjacent to no vertex, so it is isolated.

\smallskip
\noindent\emph{Claim 2: no other vertex is isolated.}
Let $H\neq \langle a^m\rangle$ be a proper nontrivial subgroup. If $H\leq \langle a\rangle$, then
$H$ is either $\langle a\rangle$ or $\langle a^2\rangle$, and we have $G=\langle a\rangle\langle b\rangle
=\langle a^2\rangle\langle b\rangle$, so $H\sim \langle b\rangle$.
If $H=\langle a^k b\rangle$, then $G=\langle a\rangle \langle a^k b\rangle$, so $H\sim \langle a\rangle$.
Hence, every vertex different from $\langle a^m\rangle$ has a neighbor, and therefore it is not isolated.
Consequently, the unique isolated vertex of $\Gamma(G)$ is $\langle a^m\rangle$.
\end{proof}

\section{Conflict of interest}
On behalf of all authors, the corresponding author states that there is no conflict of interest.

\appendix

\section{An algorithm}
The following algorithm generates the factorization graph of an arbitrary finite group $G$ by computing its subgroups and adding an edge between each pair of co-maximal subgroups.

\smallskip

\begin{algorithm}[H]\label{algo1}
\KwData{A finite group $G$}
\KwResult{$\Gamma(G)$}
\begin{verbatim}
def FACT_GRAPH(G):
    U = Graph()
    VU = [H for H in G.subgroups() if 1 < H.order() < G.order()]

    for i, H in enumerate(VU):
        h_label = str(H.gens()).replace('[', '<').replace(']', '>')
        U.add_vertex(h_label)

        for j, K in enumerate(VU):
            if H != K:
                HK = {h * k for h in H for k in K}
                if HK == set(G):  
                    k_label = str(K.gens()).replace('[', '<').replace(']', '>')
                    U.add_edge(h_label, k_label)
    U.show()
\end{verbatim}
\caption{A \textit{Python} function that computes the factorization graph of a finite group $G$ in \textit{SageMath}.}
\end{algorithm}

\end{document}